%
% Hilbert-Samuel Polynomials 
% for Algebras with Special Filtrations
%
% Jonas T. Hartwig, Erich C. Jauch, João Schwarz
%
% 
\documentclass[11pt,reqno,oneside]{amsart}
\usepackage[english]{babel}
\usepackage[utf8]{inputenc}
\usepackage[mathscr]{eucal}
\usepackage{color}
\usepackage{xcolor}
\usepackage{amsfonts}   
\usepackage{amssymb}
\usepackage{latexsym}
\usepackage{enumerate}
\usepackage[colorlinks,citecolor=purple,linkcolor=blue]{hyperref}

\usepackage{geometry}

\let\nc\newcommand

\nc{\la}{\label}

\newtheorem{theorem}{Theorem}[section]
\newtheorem{corollary}[theorem]{Corollary}
\newtheorem{lemma}[theorem]{Lemma}
\newtheorem{proposition}[theorem]{Proposition}
\theoremstyle{definition}
\newtheorem{definition}[theorem]{Definition}
\newtheorem{example}[theorem]{Example}
\newtheorem{notation}[theorem]{Notation}
\theoremstyle{remark}
\newtheorem{remark}[theorem]{Remark}

\newcommand{\Ann}{{\rm{Ann}}}

\newcommand{\Ker}{{\rm{ker}}}

\newcommand{\im}{{\rm{im}}}

\newcommand{\into}{\,\,\hookrightarrow\,\,}

\newcommand{\gr}{{\rm gr}}
\newcommand{\GK}{{\rm GK}}

\newcommand{\AMod}{A\text{-}\mathsf{Mod}}
\newcommand{\Amod}{A\text{-}\mathsf{mod}}

\newcommand{\End}{\operatorname{End}}

\begin{document}

\title{Hilbert-Samuel Polynomials 
for Algebras\\with Special Filtrations}
\author{Jonas T. Hartwig}
\address{Department of Mathematics, Iowa State University, Ames IA 50011, USA}
\email{jth@iastate.edu}
\urladdr{http://jthartwig.net}
\thanks{J.T.H. is partially supported by the Army Research Office grant W911NF-24-1-0058.}

\author{Erich C. Jauch}
\address{Department of Mathematics \& Physics, Westminster College (Missouri), Fulton MO 65251}
\email{erich.jauch@westminster-mo.edu}
\urladdr{http://ecjauch.com}

\author{Jo\~ao Schwarz}
\address{Mathematicis - College of Sciences, SUSTech, 1088 Xueyuan Avenue, Shenzhen 518055,
P.R. China}
\email{jfschwarz.0791@gmail.com}

\subjclass[2020]{Primary: 16P90 13H15 13D40 Secondary: 16W70 14F10}
\keywords{Gelfand-Kirillov dimension, Hilbert-Samuel polynomial, multiplicity, holonomic module}

\begin{abstract}
    The notion of multiplicity of a module first arose as consequence of Hilbert's work on commutative algebra, relating the dimension of rings with the degree of certain polynomials. For noncommutative rings, the notion of multiplicity first appeared in the context of modules for the Weyl algebra in Bernstein's solution of the problem of analytic continuation posed by I. Gelfand. The notion was shown to be useful to many more noncommutative rings, especially enveloping algebras, rings of differential operators, and quantum groups. In all these cases, the existence of multiplicity is related to the existence of Hilbert-Samuel polynomials. In this work we give an axiomatic definition of algebras with a notion of multiplicity, which we call \emph{very nice} and \emph{modest algebras}. We show, in an abstract setting, how the existence of Hilbert-Samuel polynomials implies the existence of a notion of multiplicity. We apply our results for the category of \emph{min-holonomic} modules --- a notion which coincides with holonomic modules for simple algebras --- and that shares many similarities with it. In particular, we generalize the usual results in the literature that are stated for Ore domains, in the more general context of prime algebras, and we show that rational Cherednik algebras admit a notion of multiplicity.

\end{abstract}

\maketitle

\tableofcontents

\section{Introduction}

The Gelfand-Kirillov dimension of an affine algebra $A$ over a base field $k$\footnote{in other words, a finitely generated associative unital not-necessarily-commutative $k$-algebra}, denoted in this paper by $\GK \, A$ or $\GK(A)$, is a type of measure of growth of the algebra. It was introduced in \cite{Gelfand}, along with the Gelfand-Kirillov transcendence degree. That paper posed the famous Gelfand-Kirillov Conjecture: if $\mathfrak{g}$ is an algebraic Lie algebra over a field of characteristic $0$, the skew-field\footnote{\textbf{}division ring} of fractions of its enveloping algebra is a suitable Weyl field. (We now know that the Conjecture is false \cite{{AOvdB}}.) Let $W_n(k)$ denote the rank $n$ Weyl algebra over a field $k$ of zero characteristic. The first use of the Gelfand-Kirillov dimension was to show that if $W_n(k) \simeq W_m(k)$, then $m=n$.
The notion of growth of algebras is also clearly related to the notion of growth of other algebraic structures \cite{Zelmanov}, particularly groups \cite[Chapter 11]{KL}.

In 1976, Borho and Kraft \cite{BK} developed the properties of the Gelfand-Kirillov dimension, and soon its relevance in ring theory became apparent. In particular, the dimension became an important tool in the study of enveloping algebras (\cite{Jantzen}, \cite{French}). With the development of the theory of quantum groups, again the Gelfand-Kirillov dimension proved to be very useful (\cite{Brown}). In this last case, powerful algorithms exists to compute this invariant (see \cite{BOOK}).

It also became very relevant to noncommutative projective geometry, where it is a good substitute of the Krull dimension (cf. \cite{SvdB}, \cite{Rogalski}). Indeed, if $A$ is an affine commutative algebra, $\GK(A)=\operatorname{Krull}(A)$, so the Gelfand-Kirillov dimension can be seen as a generalization of the Krull dimension.

At the International Congress of Mathematicians in Amsterdam, 1954, I. M. Gelfand posed an important problem of analytic continuation of certain meromorphic functions. The problem was solved, but its solution involved some heavy machinery, such as Hironaka's resolution of singularities. Later, I. N. Bernstein \cite{INBernstein} offered an elementary proof, using just basic considerations of the representation theory of the Weyl algebra; namely, he introduced the notion of \emph{holonomic modules}, which are very important in algebra and geometric analysis \cite{Borel} \cite{Hotta}. For a good account of this story, see \cite[Chapter 8]{KL}, where Bernstein's solution to Gelfand's problem is shown in detail.

He also introduced a notion of multiplicity, originally from commutative algebra \cite{Matsumura}, for the first time for noncommutative rings. The notion of holonomic modules was generalized for $D$-modules over more general algebraic varieties and other classes of algebras, such as enveloping algebras; and the same with the notion of multiplicity. All notions of multiplicity of an algebra appearing in the literature \cite[Chapter 12]{KL} share a common core. We define axiomatically algebras that satisfy these properties: we introduce the concept of \emph{very nice} and \emph{modest algebras} (Definition \ref{very-nice}).

In all known cases, such as differential operators, enveloping algebras or quantum groups, the existence of multiplicity follows from the fact that these algebras admit nice filtrations with Hilbert-Samuel polynomials. We axiomatize this in Theorem \ref{theorem-Hilbert-Samuel} and prove that the rational Cherednik algebras admits a notion of multiplicity --- a fact that seems to have escaped the attention of specialists (\cite{Thompson}, \cite{Losev}, \cite{Bellamy}). For very nice and modest algebras we introduce a small variation of the notion of holonomicity: the \emph{min-holonomic} modules, following the approach of \cite{BF}. Our notion coincides with the usual one in the case the algebra is simple, and the present authors do not know any examples of algebras for which the two notions differ. For rings of differential operators (\cite{Bjork}, \cite{Coutinho}) and enveloping algebras (\cite{French}, \cite{Jantzen}) the combination of the notions of Gelfand-Kirillov dimension and multiplicity proved to be very fruitful. To the best of our knowledge, all uses of multiplicity in the literature were for Ore domains, such as differential operators, enveloping algebras or quantum groups. With only small changes, we obtained analogues of results of the category of holonomic modules for Ore domains
for the category of min-holonomic modules in case the algebra is only assumed to be a prime ring. We believe these results to be new. As consequence, for instance, our theory can be applied to category of holonomic modules for rational Cherednik algebras when the parameter is regular (and so the algebra is simple and holonomic and min-holonomic modules coincide).

Finally, we survey the three most important classes of filtered algebras which have Hilbert-Samuel polynomials: namely, almost commutative algebras, somewhat commutative algebras and filtered semi-commutative algebras. We also consider an obvious (but as far as we know, new) generalization of the notion of somewhat commutative algebras, which we call \emph{vaguely commutative algebras} (cf. Definition \ref{vaguely-commutative}). We show that spherical subalgebras of any symplectic reflection algebra \cite{EG} are modest algebras, and hence always have a notion of multiplicity --- another fact not mentioned by specialists. We also consider the Poincaré series of filtered algebras, for it can also give information about the existence or not of Hilbert-Samuel polynomials; and filtrations by ordered semigroups other then $\mathbb{N}$, where it has been shown that one can essentially reduce multifiltrations to $\mathbb{N}$-filtrations.

The main point of the paper is to lay down a foundation for the study algebras with a notion of multiplicity, which is as useful for representation theory as it was for the case of the Weyl algebra.
Our main references are \cite{KL}, \cite[Chapter 8]{McConnell}, \cite{Lorenz}, \cite{BF} and \cite{McConnnel2}.

The structure of the paper is as follows. In Section \ref{sec:preliminaries}, we recall some basic facts about Gelfand-Kirillov dimension and filtered algebras. The notions of very nice and modest algebras are introduced in Section \ref{sec:verynice}.
In Section \ref{sec:algebras-with-special-filtrations}, we consider several important classes of algebras equipped with a filtration subject to some conditions, their interrelationships and properties. In particular we introduce the class of vaguely commutative algebras.
Lastly, in Section \ref{sec:more-general-filtrations}, we study semigroup filtrations techniques of re-filtering.

\section{Preliminaries}\label{sec:preliminaries}

\subsection{The basics of the Gelfand-Kirillov dimension}

We follow \cite[Chapters 1, 2]{KL} and \cite[Section 8.1]{McConnell} closely. Throughout, $k$ denotes the base field, and is arbitrary unless otherwise explicitly stated.

\begin{definition}
Let $\Phi$ be the set of functions $f: \mathbb{N} \rightarrow \mathbb{R}$ such that there exists an $n_0 \in \mathbb{N}$ with:
\[f(n)>0\text{ and } f(n+1) \geq f(n), \quad\forall \, n \geq n_0.\]
For two functions in $\Phi$, we write $f \leq^* g$ if there are constants $c, m \in \mathbb{N}$ such that $f(n) \leq cg(mn)$. If $g \leq^* f$ also, we write $f \sim g$. We represent the class of $f$ in $\Phi/\sim$ by $\mathcal{G}(f)$. This equivalence class of $f$ is called its \emph{growth}, and the relation induced by $ \leq^* $ in $\Phi/\sim$ is denoted $\leq$.
\end{definition}
\begin{notation}
Let $y \in \mathbb{R}, y \geq 0$. The growth of the function $n \mapsto n^y$ is denoted $\mathcal{P}_y$.
Let $y \in \mathbb{R}, y>0$. The growth of the function $n \mapsto e^{n^y}$ is $\mathcal{E}_y$.
\end{notation}

If $f(n)=\ln(n+1)$, $\mathcal{G}(f) > \mathcal{P}_0$ but $\mathcal{G}(f) < \mathcal{P}_\varepsilon$ , for any $\varepsilon >0$. So there is no analogue of the archimedean property for this order.

Finally, we remark that not every pair $\mathcal{G}(f)$, $\mathcal{G}(h)$ is comparable.

\begin{notation}\label{asymptotic}
For $f \in \Phi$,
\[\gamma(f)= \limsup_{n \to \infty}\big(\log_n \, f(n)\big).\]
\end{notation}

\begin{proposition}\label{growth}
    Let $f,g \in \Phi$.
    \begin{enumerate}[{\rm (i)}]
        \item If $\mathcal{G}(f)=\mathcal{G}(g)$, then $\gamma(f)=\gamma(g)$.
        \item $\gamma(f+g)=\sup \{ \gamma(f), \gamma(g) \}$.
        \item If $f(n)=p(n)$ for $n\gg 0$ and $p(x) \in \mathbb{R}[x]$, then $\gamma(f)=\deg \, p$.
        \item If $\gamma(f)=\lim_{n \to \infty} \log_n \, f(n)$, $\gamma(fg)= \gamma(f)+\gamma(g)$.
        \item If $g(n) \leq f(an+b), \, n\gg 0$, where $a,b \in \mathbb{N}$, then $\gamma(g) \leq \gamma(f)$.
    \end{enumerate}
\end{proposition}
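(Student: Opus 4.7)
The plan is to prove all five parts by unwinding the definition $\gamma(f)=\limsup_{n\to\infty}\log_n f(n)$ and exploiting two elementary observations that recur throughout: first, for any constants $a,b,c$ with $a,c>0$, one has $\log_n(c)\to 0$ and $\log_n(an+b)\to 1$ as $n\to\infty$; second, if $f(n)\le n^{\gamma(f)+\varepsilon}$ holds for all sufficiently large $n$ (which follows from the $\limsup$ definition), then any upper bound on $f$ translates into a bound on $\gamma$ by taking $\log_n$.

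For parts (i) and (v), the key is to absorb an argument substitution. Assuming $g(n)\le f(an+b)$ for $n\gg 0$, I would write $\log_n g(n)\le \log_n f(an+b)$ and bound the right side using $f(m)\le m^{\gamma(f)+\varepsilon}$ (valid for $m$ large, hence for $m=an+b$ with $n$ large). Then $\log_n f(an+b)\le (\gamma(f)+\varepsilon)\log_n(an+b)$, whose $\limsup$ is $\gamma(f)+\varepsilon$; letting $\varepsilon\downarrow 0$ gives (v). For (i), the hypothesis $\mathcal{G}(f)=\mathcal{G}(g)$ supplies constants so that $f(n)\le c\,g(mn)$ and $g(n)\le c'f(m'n)$ for $n\gg 0$, i.e.~the hypothesis of (v) up to a multiplicative constant $c$; the constant disappears since $\log_n c\to 0$, so the same argument yields $\gamma(f)\le\gamma(g)$, and symmetry gives equality. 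Thus (i) will actually follow from (the proof of) (v).

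Part (ii) splits into two inequalities: $\gamma(f+g)\ge\max\{\gamma(f),\gamma(g)\}$ is immediate from $f+g\ge f$ and $f+g\ge g$ together with (v) (with $a=1,b=0$), while the reverse follows from $f(n)+g(n)\le 2\,n^{\max\{\gamma(f),\gamma(g)\}+\varepsilon}$ for $n\gg 0$, taking $\log_n$ and letting $\varepsilon\downarrow 0$. Part (iii) is a direct computation: if $p(x)=a_d x^d+\text{lower}$ with $a_d>0$ (forced by $p(n)>0$ eventually), then $\log_n p(n)=d+\log_n(a_d+O(1/n))\to d$. Part (iv) uses the identity $\log_n(f(n)g(n))=\log_n f(n)+\log_n g(n)$ together with the standard fact that $\limsup(u_n+v_n)=\lim u_n+\limsup v_n$ whenever $\lim u_n$ exists; here $\lim \log_n f(n)=\gamma(f)$ by hypothesis and $\limsup \log_n g(n)=\gamma(g)$, giving the result.

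The main obstacle, and the step I would write out most carefully, is (v), because every other part either reduces to it (as in (i) and the lower bound of (ii)) or is a direct $\limsup$ computation. The subtlety in (v) is that we are comparing $\limsup$ values along two different sequences ($n$ versus $an+b$), and the bound $f(m)\le m^{\gamma(f)+\varepsilon}$ must be applied not at the generic index but at the shifted-and-scaled index $m=an+b$; the proof requires observing that ``for $n$ large enough'' is equivalent to ``for $an+b$ large enough,'' so the bound is still available. Once (v) is established cleanly, the remaining items are straightforward applications of the two observations in paragraph one.
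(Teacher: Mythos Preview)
Your proposal is correct. The paper does not actually prove this proposition; its entire proof is a citation to \cite[Lemma~2.1(b)]{KL} and \cite[Lemma~8.1.7]{McConnell}. Your argument---bounding $f(n)\le n^{\gamma(f)+\varepsilon}$ for large $n$ directly from the $\limsup$ definition, absorbing affine reparametrizations via $\log_n(an+b)\to 1$ and constants via $\log_n c\to 0$, and organizing the five parts so that (v) does the real work---is exactly the standard elementary proof one finds in those references, so there is nothing substantive to compare. One small point worth making explicit when you write it up: in~(v) your computation $\log_n(an+b)\to 1$ tacitly assumes $a\ge 1$; the degenerate case $a=0$ (so $g$ is eventually bounded by the constant $f(b)$) should be handled separately, though it is immediate since then $\gamma(g)\le 0\le\gamma(f)$.
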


\begin{proof}
    \cite[Lemma 2.1b)]{KL}, \cite[Lemma 8.1.7]{McConnell}
\end{proof}

\begin{definition}
Let $A$ be an affine algebra and $V$ a finite dimensional generating space for $A$. Let $d_V(n)=\dim \, \sum_{i=0}^n V^i$. Then $\mathcal{G}(d_V)$ is independent of the choice of the generating space $V$ (\cite[Lemma 1.1]{KL}). Hence,
\[ \gamma(d_V)=\limsup_{n \rightarrow \infty} \big(\log_n(d_V(n))\big),\]
is also independent of $V$, so by Proposition \ref{growth}(i) this number is well-defined, and is called the \emph{Gelfand-Kirillov dimension} of $A$, denoted $\GK \, A$. We also put $\mathcal{G}(A)=\mathcal{G}(d_V)$, where $V$ is any choice of frame for $A$.
\end{definition}

\begin{remark}
    This definition makes sense and is useful for non-associative algebras, like Lie algebras (\cite[Chater 12, Section 1]{KL}
\end{remark}

\begin{remark}
When $1 \in V$, $d_V(n)$ reduces to $\dim \, V^n$, and $V$ is called a frame.
\end{remark}

\begin{definition}
    Let $A$ be an affine algebra. We say $A$ has 
    \begin{itemize}
        \item \emph{polynomial growth} if $\mathcal{G}(A)=\mathcal{P}_m$ for some natural $m$,
        \item \emph{exponential growth} if $\mathcal{G}(A)=\mathcal{E}_1$,
        \item \emph{subexponenial growth} or \emph{intermediate growth} if $\mathcal{G}(A)< \mathcal{E}_1$ but $\mathcal{G}(A) \nleq \mathcal{P}_m, \, \forall \, m \in \mathbb{N}$.
    \end{itemize}
\end{definition}

\begin{example}
The Gelfand-Kirillov dimension of an affine algebra is $0$ if and only if it is finite-dimensional. The free algebra $k \langle x, y \rangle$ has exponential growth, and hence its Gelfand-Kirillov dimension is $\infty$. Indeed, $V=kx \oplus ky$ is a generating space and $\dim \, \sum_{i=0}^n \dim \, V^i=1+2+ \ldots 2^n= 2^{n+1}-1$.
\end{example}

The following Proposition is canonical and very useful:

\begin{proposition}\label{convenient-proposition}\phantom{X}
\begin{enumerate}[{\rm (a)}]
\item If $0 \neq f \in \mathbb{Q}[x]$ is polynomial of degree $d$, then there are rational numbers $a_0, a_1, \ldots, a_d$, such that $f(n) = a_d \binom{n}{d} + a_{d-1} \binom{n}{d-1}+ \ldots a_1 \binom{n}{1}+a_0$, $\forall \, n \in \mathbb{N}$.
\item The following properties of a function $f: \mathbb{N} \rightarrow \mathbb{Q}$ are equivalent:
\begin{enumerate}[{\rm (i)}]
\item There exists $m \in \mathbb{N}$ such that for all $n \geq m$ we have $f(n) = a_d \binom{n}{d} + a_{d-1} \binom{n}{d-1}+ \ldots a_1 \binom{n}{1}+a_0$
\item There exists $a_1, \ldots, a_d \in\mathbb{Q}$ and $m \in \mathbb{N}$ such that for all $n \geq m$,
\[ f(n+1)-f(n)=a_d \binom{n}{d-1} + \ldots a_2 \binom{n}{1} + a_1.\]
\end{enumerate}
\item If $f(n)$ is expressed as in \emph{(a)} and $f(n) \in \mathbb{Z}, n\gg 0$, $a_0, a_1, \ldots, a_d \in \mathbb{Z}$
\item If $f(n)$ is expressed as in \emph{(a)}, and for $n\gg 0$, $f(n) \in \mathbb{N}$ and $f(n+1)-f(n) \geq 0$, $a_d$ is a positive integer, called the \emph{Bernstein number} of $f$.
\end{enumerate}
\end{proposition}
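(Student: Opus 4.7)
The plan is to work throughout with the \emph{binomial basis} $\bigl\{\binom{x}{0},\binom{x}{1},\ldots,\binom{x}{d}\bigr\}$ of $\mathbb{Q}[x]_{\leq d}$ and the finite difference operator $\Delta f(n) := f(n+1) - f(n)$, whose fundamental identity $\Delta\binom{n}{k} = \binom{n}{k-1}$ underlies every part of the proposition.

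For (a), since $\binom{x}{k}$ has leading coefficient $1/k!$, the polynomials $\binom{x}{0},\ldots,\binom{x}{d}$ are $\mathbb{Q}$-linearly independent and hence a basis of $\mathbb{Q}[x]_{\leq d}$, so every $f$ of degree $d$ has a unique such expansion. For the equivalence in (b), the implication (i)$\Rightarrow$(ii) is immediate from the $\Delta$-identity above. For (ii)$\Rightarrow$(i), I would telescope: for $n \geq m$,
\[
f(n) - f(m) \;=\; \sum_{j=m}^{n-1}\Bigl(a_d\binom{j}{d-1} + \ldots + a_1\Bigr),
\]
and then apply the hockey-stick identity $\sum_{j=0}^{N-1}\binom{j}{k} = \binom{N}{k+1}$ to each summand. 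The result is $a_d\binom{n}{d} + \ldots + a_1\binom{n}{1}$ plus a constant (depending on $m$ but not on $n$), which is absorbed into a new $a_0$.

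For (c), I would iterate $\Delta$: the formula from (a) yields $\Delta^d f(n) \equiv a_d$, so if $f$ is eventually integer-valued then so is $\Delta^d f$, forcing $a_d \in \mathbb{Z}$. Then $\Delta^{d-1}f(n) = a_d\binom{n}{1} + a_{d-1}$ is eventually integral, and since $a_d\binom{n}{1}$ already is, so is $a_{d-1}$; continuing downward through $\Delta^{d-2}f, \ldots, \Delta^{0} f = f$ gives $a_{d-2},\ldots,a_0 \in \mathbb{Z}$. For (d), (c) already provides $a_d \in \mathbb{Z}$; positivity follows because eventual non-decrease of $f$ means $\Delta f \geq 0$ for $n \gg 0$, while $\Delta f$ is a polynomial of degree $d-1$ with leading coefficient $a_d/(d-1)!$, which must therefore be non-negative. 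Since $f \neq 0$ has degree $d$ we have $a_d \neq 0$, hence $a_d > 0$. The degenerate case $d=0$ is handled directly: $f \equiv a_0 \in \mathbb{N}$ with $f \neq 0$ gives $a_0 \in \mathbb{Z}_{>0}$.

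The main obstacle is purely bookkeeping rather than conceptual: in (b) one must keep careful track of the constant produced by telescoping, and in (c) one must argue \emph{downward} from the leading coefficient rather than upward, since the binomial expansion in (a) naturally suggests the reverse order.
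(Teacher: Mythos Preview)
The paper does not prove this proposition; it is introduced as ``canonical and very useful'' and stated without argument, so there is no proof to compare against. Your argument is correct and is the standard one: the binomial polynomials form a $\mathbb{Q}$-basis of $\mathbb{Q}[x]_{\le d}$, the identity $\Delta\binom{n}{k}=\binom{n}{k-1}$ handles both directions of (b) (with the hockey-stick identity for the telescoping step), iterated differences $\Delta^j f$ peel off the coefficients $a_d,a_{d-1},\ldots$ in (c), and the sign of the leading coefficient of $\Delta f$ gives (d). Nothing is missing.
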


\begin{example}
    Let $A=k[x_1, \ldots, x_d]$, $V=kx_1 \oplus \ldots kx_n$. Then

    \[ \dim(V^{n+1}) = \binom{n+1+d-1}{d-1}=\binom{n+1}{d-1} \]

is a polynomial in $d$ with degree $d-1$. Since $\dim(V^{n+1})=d_V(n+1)-d_V(n)$ it follows by item b) of the previous Proposition that $d_V$ is a polynomial in $n$ with degree $d$. Hence $GK \, k[x_1,\ldots,x_n]=d$.

\end{example}

Note that in this example, the Gelfand-Kirillov dimension and the Krull dimension give the same number. This is no coincidence:

\begin{theorem}
    Let $A$ be an affine commutative algebra. Then $GK(A)=\operatorname{Krull} \, A$.
\end{theorem}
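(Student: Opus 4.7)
The plan is to use Noether normalization to sandwich $\GK(A)$ between two copies of $d:=\operatorname{Krull}(A)$. Noether normalization for affine commutative $k$-algebras provides algebraically independent elements $y_1,\ldots,y_d\in A$ such that $A$ is finitely generated as a module over the polynomial subalgebra $B:=k[y_1,\ldots,y_d]$, with $d=\operatorname{Krull}(A)$. The preceding example already gives $\GK(B)=d$, so the proof reduces to verifying $\GK(A)=\GK(B)$.

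For the easy inequality $\GK(A)\geq d$, I would pick any frame $V_B$ of $B$ and extend it to a frame $V_A$ of $A$ with $V_B\subseteq V_A$. Then $V_B^n\subseteq V_A^n$ for every $n$, so $d_{V_B}(n)\leq d_{V_A}(n)$, and Proposition \ref{growth}(v) (with $a=1,b=0$) yields $d=\gamma(d_{V_B})\leq\gamma(d_{V_A})=\GK(A)$.

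The substantive part is the upper bound $\GK(A)\leq d$. Fix module generators $a_1,\ldots,a_r\in A$ of $A$ over $B$ and a frame $V$ of $A$ containing $1,y_1,\ldots,y_d$ and $a_1,\ldots,a_r$. Writing $a_ia_j=\sum_\ell b_{ij\ell}a_\ell$ with $b_{ij\ell}\in B$, let $c\geq 1$ bound the total degrees of all the $b_{ij\ell}$ in the $y_i$'s, and denote by $B_{\leq m}$ the $k$-span of monomials in $y_1,\ldots,y_d$ of total degree at most $m$. A short induction on $n$ --- using that multiplication of $B_{\leq m}\cdot a_\ell$ by a $y_i$ lies in $B_{\leq m+1}\cdot a_\ell$, while multiplication by an $a_j$ lies in $\sum_s B_{\leq m+c}\cdot a_s$ --- yields
\[V^n\subseteq\sum_{\ell=1}^r B_{\leq cn}\cdot a_\ell.\]
Hence $\dim V^n\leq r\binom{cn+d}{d}=O(n^d)$, so $\gamma(d_V)\leq d$ and $\GK(A)\leq d$.

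The main obstacle will be this last induction: one must set things up so that each multiplication by an element of $V$ raises the $B_{\leq m}$-filtration by at most $c$, which requires using the $B$-module structure of $A$ together with the finite multiplication table among the $a_i$ simultaneously. Everything else is book-keeping around two standard ingredients --- Noether normalization and the polynomial-ring computation of the previous example --- and combining the two inequalities gives $\GK(A)=d=\operatorname{Krull}(A)$.
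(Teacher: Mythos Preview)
Your argument is correct. The paper does not actually prove this statement; it merely cites \cite[Theorem~4.5]{KL}, and the proof given there is precisely the Noether-normalization argument you outline: reduce to a polynomial subring $B=k[y_1,\ldots,y_d]$ over which $A$ is module-finite, use $\GK(B)=d$, and show $\GK(A)=\GK(B)$ by a frame comparison for the lower bound and a filtration bound of the type $V^n\subseteq\sum_\ell B_{\leq cn}\,a_\ell$ for the upper bound. One tiny housekeeping point: to make the base case of your induction clean, include $1$ among the module generators $a_\ell$ (which you may always do); then $1,y_i\in B_{\leq c}\cdot a_1$ and the induction proceeds exactly as you wrote.
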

\begin{proof}
    \cite[Theorem 4.5]{KL}.
\end{proof}

\begin{example}\label{Smith}
    Let $\mathfrak{L}$ be the infinite-dimensional Lie algebra with basis $\{ x, y_1, y_2, \ldots \}$ and brackets given by $[x, y_i]=y_{i+1}, \, [y_i, y_j]=0$. Let $A=U(\mathfrak{L})$. Then $\mathcal{G}(A)=\mathcal{G}\big(\exp(\sqrt{n})\big)$ has subexponential growth \cite[Example 1.3]{KL}.
\end{example}

\begin{proposition}
    If $A$ is infinite-dimensional affine algebra, then $\mathcal{P}_1 \leq \mathcal{G}(A) \leq \mathcal{E}_1$. In particular, if $A$ is infinite-dimensional, its Gelfand-Kirillov dimension is at at least $1$.
\end{proposition}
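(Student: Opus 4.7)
The plan is to work with a frame $V$ of $A$, so $1 \in V$ and $s := \dim_k V < \infty$, and the ascending chain $V^0 \subseteq V \subseteq V^2 \subseteq \cdots$ exhausts $A$. Because $1 \in V$, the defining sum $d_V(n) = \dim \sum_{i=0}^n V^i$ collapses to $d_V(n) = \dim V^n$. The goal is then to sandwich $d_V(n)$ between $n+1$ and $s^n$ up to the equivalence $\sim$ on $\Phi$, which by Proposition \ref{growth}(i) is enough to control $\mathcal{G}(A)$ (and yields $\GK A \geq 1$ upon passing to $\gamma$).

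For the upper bound $\mathcal{G}(A) \leq \mathcal{E}_1$, I would pick a $k$-basis $\{v_1,\ldots,v_s\}$ of $V$ and observe that $V^n$ is spanned by the monomials $v_{i_1}\cdots v_{i_n}$, of which there are at most $s^n$. Hence $d_V(n) \leq s^n = e^{n\ln s}$, which via the relation $\leq^\ast$ (absorbing $\ln s$ into the constant $m$) places $d_V \leq^\ast (n \mapsto e^n)$, so $\mathcal{G}(A) \leq \mathcal{E}_1$.

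For the lower bound $\mathcal{P}_1 \leq \mathcal{G}(A)$ the only substantive point, and the one step I expect to require real care, is the strictness $V^n \subsetneq V^{n+1}$ for every $n$. If it failed and $V^{n+1} = V^n$ for some $n$, then $V^{n+2} = V\cdot V^{n+1} = V\cdot V^n = V^{n+1} = V^n$, and inductively $V^m = V^n$ for every $m \geq n$; since $V$ generates $A$ this forces $A = V^n$, contradicting infinite-dimensionality. Strictness then gives $d_V(n) \geq d_V(0) + n = n+1$, i.e.\ $(n \mapsto n) \leq^\ast d_V$, so $\mathcal{P}_1 \leq \mathcal{G}(A)$. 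Combining the two inequalities in $\Phi/\sim$ yields the proposition, and taking $\gamma$ (using Proposition \ref{growth}(iii) on the lower bound) gives the consequence $\GK A \geq 1$.
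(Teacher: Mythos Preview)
Your argument is correct and is precisely the standard proof from \cite[Proposition 1.4]{KL}, which is what the paper cites without reproducing. The only nitpick is that in the final sentence you invoke Proposition~\ref{growth}(iii) to pass from $\mathcal{P}_1 \leq \mathcal{G}(A)$ to $\GK A \geq 1$; part (iii) only computes $\gamma(n\mapsto n)=1$, and the monotonicity step $\gamma(n\mapsto n)\leq \gamma(d_V)$ is really Proposition~\ref{growth}(v) (or follows directly from $d_V(n)\geq n+1$).
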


\begin{proof}
    \cite[Proposition 1.4]{KL}.
\end{proof}

\begin{theorem}
    There is no infinite-dimensional affine algebra with Gelfand-Kirillov dimension in the open real interval $(1,2)$. This is known as \emph{Bergman's Gap}. If $\alpha$ is any real number $\geq 2$, there exists an infinite-dimensional algebra $A$ with $\GK \, A = \alpha$.
\end{theorem}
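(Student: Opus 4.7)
The statement naturally separates into two independent claims: the \emph{gap} (no $\GK$ value lies strictly between $1$ and $2$) and the \emph{realization} of every real number $\alpha \geq 2$ as a $\GK$-value.

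For the gap, I would suppose for contradiction that $A$ is infinite-dimensional and affine with $\GK(A) \in (1,2)$, and fix a frame $V$. Setting $d(n) = \dim V^n$, the first difference $\delta(n) := d(n+1) - d(n)$ satisfies $\delta(n) \geq 1$ for every $n$: if $\delta(n_0)=0$ then $V^{n_0+1} = V^{n_0}$, which propagates to $V^m = V^{n_0}$ for every $m \geq n_0$, contradicting $\dim_k A = \infty$. The crucial ingredient is then Bergman's monotonicity lemma: the sequence $\delta(n)$ is eventually non-decreasing, and if it is not eventually constant then its own first difference $\delta(n+1) - \delta(n)$ is eventually $\geq 1$. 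This rigidity is extracted from the associated graded algebra $\gr A = \bigoplus_{n \geq 0} V^n / V^{n-1}$, by comparing the dimensions of the multiplication maps $(V/k) \otimes (\gr A)_n \to (\gr A)_{n+1}$ with the Hilbert function; pinning down this step is the main technical obstacle.

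Granted the lemma, the dichotomy is immediate. If $\delta(n)$ is eventually constant, say $\delta(n) = c$ for $n \geq N$, then $d(n) = cn + O(1)$ and by Proposition \ref{growth}(iii) we obtain $\GK(A) = 1$. Otherwise $\delta(n) \geq n - N$ for large $n$, which on summing yields $d(n) \geq \binom{n-N}{2} + O(n)$ and hence $\GK(A) \geq 2$. Either alternative contradicts $\GK(A) \in (1,2)$.

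For the realization, if $\alpha = d$ is an integer $\geq 2$ the polynomial algebra $k[x_1,\ldots,x_d]$ already has $\GK = d$ by the earlier example. For non-integer $\alpha$ I would construct an affine monomial algebra whose Hilbert function grows asymptotically as $n^\alpha$: one takes a quotient $k\langle x,y \rangle / I$ by an ideal generated by carefully chosen words so that the number of surviving reduced monomials of length $\leq n$ behaves like $n^\alpha$. Such semigroup-combinatorial constructions are classical (due to Borho--Kraft, refined by Warfield), and the delicate point is the combinatorial tuning of the monomial relations to hit an arbitrary real exponent $\alpha \geq 2$ rather than just an integer or rational value.
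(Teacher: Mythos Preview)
The paper does not actually prove this theorem; its entire proof is the citation \cite[Chapters 1, 2]{KL}. Your sketch is therefore already more detailed than what the paper offers, and in outline it matches the argument found in that reference: obtain the gap from a linear-versus-quadratic dichotomy for $d_V(n)$, and realize arbitrary real $\alpha \geq 2$ via monomial algebras (the Borho--Kraft and Warfield constructions you name are indeed the ones used there).

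One caution about your key lemma. The statement that $\delta(n) = d(n{+}1) - d(n)$ is eventually non-decreasing, and that when it is not eventually constant its own first difference is eventually $\geq 1$, is sharper than what Bergman's argument actually yields and sharper than what you need. The lemma in \cite{KL} does not control second differences of $d_V$; it delivers the dichotomy directly, showing that either $d_V(n)$ is bounded above by a linear function of $n$ or $d_V(n)$ dominates a quadratic. The underlying mechanism is not monotonicity of $\delta$ in the associated graded algebra but a combinatorial fact about factorial languages (sets of words closed under taking subwords): such a language either has bounded complexity or has at least $n+1$ words of each length $n$. You rightly flag this step as the technical crux; once the dichotomy is stated in that form, your deduction of the gap and your account of the realization part are both correct.
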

\begin{proof}
    \cite[Chapters 1, 2]{KL}.
\end{proof}

\begin{definition}
    Let $M$ be a finitely generated left $A$-module, where $A$ is an affine algebra. Choose a finite-dimensional space $F \subset M$ that generates it as a module, and let $V$ be a frame for $A$. Let $d_{F,V}(n)=\dim \, V^n F$. The \emph{Gelfand-Kirillov dimension} of $M$ is
    \[ \GK \, M=\GK_A\, M=\limsup_{n \to \infty} \, \log_n \, d_{F,V}(n), \]
    and the value is independent of the choices of $F, V$ by \cite[Proof of Lemma 1.1]{KL}.
\end{definition}

\subsection{Filtered algebras}

For the sake of simplicity, we will consider only $\mathbb{N}$-gradings and filtrations. 

\begin{definition}
    A \emph{grading} on an algebra $A$ is a collection of subspaces $\{A_i\}_{i=0}^\infty$ such that 
    $A=\bigoplus_{i=0}^\infty A_i $ and $A_i A_j \subset A_{i+j}$. Note that necessarily $k \subset A_0$.
    A grading is \emph{finite-dimensional grading} if each $A_i$ is a finite-dimensional vector space.
    A \emph{graded algebra} is an algebra together with a grading. A graded algebra is \emph{connected} if its grading is finite-dimensional and $A_0=k$.
\end{definition}

\begin{definition}
    Let $A$ be a graded algebra. A \emph{grading} on an $A$-module $M$ is a collection of subspaces $\{M_i\}_{i=0}^\infty$ such that $M=\bigoplus_{i=0}^\infty M_i$ and $A_i M_j \subset M_{i+j}$. If each $M_i$ is finite-dimensional, we say that the grading is \emph{finite-dimensional}. A \emph{graded $A$-module} is an $A$-module $M$ together with a grading on $M$.
\end{definition}

\begin{definition}
    A \emph{filtration} on an algebra $A$ is 
    an increasing chain of subspaces $\mathcal{F}=\{ F_i \}_{i=0}^\infty$,  $i\le j \Rightarrow F_i \subset F_j$, such that $1 \in F_0$, $F_i F_j \subset F_{i+j}$ and $A= \bigcup_{i=0}^\infty F_i$. The filtration is \emph{finite-dimensional} if each $F_i$ is finite-dimensional.
    A \emph{filtered algebra} is an algebra together with a filtration.
    We denote by $\gr_\mathcal{F} \, A$ the \emph{associated graded algebra}. As a vector space it is
    \[ \gr_\mathcal{F} \, A = \bigoplus_{i=0}^\infty \big(A_i/A_{i-1}\big),\]
    with the convention that $A_{-1}=0$. The multiplication in $\gr_\mathcal{F}\, A$ is given by the formula $(x+ A_{i-1})(y+A_{j-1})=(xy+A_{i+j-1})$.
\end{definition}

\begin{definition}
    Let $A$ be a filtered algebra. A \emph{filtration} on an $A$-module $M$ is an increasing chain of subspaces $\Omega=\{ M_i \}_{i \geq 0}$, $i\le j\Rightarrow M_i\subset M_j$, such that $A_iM_j\subset M_{i+j}$ and $M=\bigcup_{i=0}^\infty M_i$. If each $M_i$ is finite-dimensional, the filtration $\Omega$ is \emph{finite-dimensional filtration}. We denote by $\gr_\Omega \, M$ the \emph{associated graded module}, which is a module over $\gr_\mathcal{F} \, A$. As a vector space,
    \[ \gr_\Omega \, M = \bigoplus_{i=0}^\infty \big(M_i/M_{i-1}\big), \]
    with the convention that $M_{-1}=0$. The action of $\gr_\mathcal{F}\,A$ on $\gr_\Omega\, M$ is given by $(a+ A_{i-1})(m+M_{j-1})=(am +M_{i+j-1})$.
\end{definition}

The following techniques are common in ring theory.

\begin{proposition}\label{filtered-graded}
Let $A$ be an algebra with filtration $\mathcal{F}$.
\begin{enumerate}[{\rm (i)}]
    \item If  $\gr_\mathcal{F} \, A$ is an affine algebra, so is $A$.
    \item If  $\gr_\mathcal{F} \, A$ is a Noetherian algebra, so is $A$.
    \item If  $\gr_\mathcal{F} \, A$ is prime ring, so is $A$.
    \item If  $\gr_\mathcal{F} \, A$ is a domain, so is $A$.
\end{enumerate}
\end{proposition}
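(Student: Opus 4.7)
The plan is to use the standard principal-symbol machinery uniformly across all four parts. For $0\neq x\in A$, let $d(x):=\min\{n:x\in F_n\}$ and define the \emph{principal symbol} $\sigma(x):=x+F_{d(x)-1}\in F_{d(x)}/F_{d(x)-1}\subseteq \gr_\mathcal{F} A$, which is nonzero by construction. For any left (or two-sided) submodule $I\subseteq A$, set
\[
\gr I \;:=\; \bigoplus_{n\geq 0}\frac{I\cap F_n+F_{n-1}}{F_{n-1}} \;\subseteq\; \gr_\mathcal{F} A,
\]
a graded left (resp.\ two-sided) ideal of $\gr_\mathcal{F} A$ that is nonzero precisely when $I$ is. The one multiplicative fact I will use repeatedly is: whenever $\sigma(x)\sigma(y)\neq 0$ in $\gr_\mathcal{F} A$, one has $\sigma(xy)=\sigma(x)\sigma(y)$; equivalently, $\sigma(x)\sigma(y)\neq 0$ forces $xy\neq 0$.

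Parts (iii) and (iv) are then almost immediate. For (iv), nonzero $a,b\in A$ have nonzero symbols whose product is nonzero in the domain $\gr_\mathcal{F} A$, hence $ab\neq 0$. For (iii), if nonzero two-sided ideals $I,J\subseteq A$ satisfied $IJ=0$, then $\gr I$ and $\gr J$ would be nonzero graded two-sided ideals of $\gr_\mathcal{F} A$ with $\gr I\cdot \gr J=0$, since any product $\sigma(x)\sigma(y)$ with $x\in I,\,y\in J$ nonzero must vanish (otherwise it would equal $\sigma(xy)=\sigma(0)$, impossible); this contradicts primeness of $\gr_\mathcal{F} A$.

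For (i), lift a finite set of homogeneous generators $\bar g_1,\ldots,\bar g_r$ of $\gr_\mathcal{F} A$, with $\bar g_i\in F_{n_i}/F_{n_i-1}$, to elements $g_i\in F_{n_i}$; I claim $A$ is generated as a $k$-algebra by $g_1,\ldots,g_r$. To show that an arbitrary $x\in F_n$ lies in the subalgebra $B:=k\langle g_1,\ldots,g_r\rangle$, I induct on $n$: expand $\sigma(x)\in \gr_n A$ as a polynomial in the $\bar g_i$'s (taking only its homogeneous degree-$n$ component), lift to the same polynomial $x'\in B\cap F_n$ in the $g_i$'s, and use the multiplicative property of symbols to conclude $\sigma(x')=\sigma(x)$, hence $x-x'\in F_{n-1}$, which lies in $B$ by induction.

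Finally, for (ii) I reduce Noetherianity to the following lemma: \emph{if $I\subseteq J$ are left ideals of $A$ with $\gr I=\gr J$, then $I=J$.} Granted the lemma, any ascending chain $I_1\subseteq I_2\subseteq\cdots$ in $A$ yields a chain $\gr I_1\subseteq \gr I_2\subseteq\cdots$ in $\gr_\mathcal{F} A$ which stabilizes by hypothesis, and the lemma pushes stabilization back to $A$. For the lemma itself, I take $x\in J$ and induct on $d(x)$: pick $x'\in I\cap F_{d(x)}$ with $\sigma(x')=\sigma(x)\in \gr I=\gr J$; then $x-x'\in J$ has strictly smaller degree, and the base case $x\in F_0\cap J=F_0\cap I$ is settled by $\gr_0 I=\gr_0 J$. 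The main obstacle throughout is bookkeeping, and the subtlest point is that the descent in the lemma and in (i) works because the filtration is $\mathbb{N}$-indexed, so the degree of successive correction terms drops strictly and terminates in finitely many steps.
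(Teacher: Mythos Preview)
Your argument is correct and is the standard principal-symbol proof; the paper itself gives no argument beyond citing \cite{KL} and \cite{McConnell}, and your write-up is essentially what one finds in those references. One cosmetic remark on part~(i): the fact you actually need is that $g_{i_1}\cdots g_{i_k}+F_{n-1}=\bar g_{i_1}\cdots\bar g_{i_k}$ in $\gr_n A$ whether or not this product vanishes, so there is no need to invoke the nonvanishing clause of your ``multiplicative property of symbols'' monomial-by-monomial---the identity $x'+F_{n-1}=\sigma(x)$ follows directly from the definition of the graded product, and the rest of your induction goes through unchanged.
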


\begin{proof}
    \cite{KL}, \cite{McConnell}.
\end{proof}

As we know, not every domain is an Ore domain. For instance, the free associative algebra $k \langle x_1, \ldots, x_n \rangle$ is not an Ore domain (when $n>1$). In fact, as shown by Malcev (\cite[Theorem 9.11]{Lam2}), there are noncommutative domains that cannot even be embedded into division rings.
So it will be convenient to have at hand general results providing sufficient conditions for a domain to be an Ore domain.

\begin{proposition}\label{it-is-Ore}
 A domain $R$ is an Ore domain if any of the following conditions hold:
\begin{enumerate}[{\rm (i)}]
    \item $R$ is Noetherian,
    \item $R$ does not contain the free algebra on two elements,
    \item $R$ is a PI-algebra,
    \item $R$ is of subexponential growth; in particular, if $\GK \, R<\infty$.
\end{enumerate}
\end{proposition}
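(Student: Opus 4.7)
Conditions (iii) and (iv) each quickly reduce to (ii): a subring of a PI algebra is itself PI, while the free algebra $k\langle x,y\rangle$ satisfies no polynomial identity, so (iii) implies (ii); and the growth of a subalgebra of $R$ is dominated by the growth of $R$, while $k\langle x,y\rangle$ has exponential growth by the earlier example in the paper, so (iv) implies (ii). Hence it suffices to prove the Ore property under hypothesis (i) and under hypothesis (ii) separately.

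For (i), the strategy is the standard chain argument. Given nonzero $a,b$ in a Noetherian domain $R$, the ascending chain of right ideals
\[
bR\subseteq bR+abR\subseteq bR+abR+a^2bR\subseteq\cdots
\]
must stabilize, yielding a relation $a^{N+1}b=\sum_{i=0}^N a^ib r_i$ with some $r_\ell\neq 0$. Taking $\ell$ minimal and cancelling $a^\ell$ on the left (legitimate in a domain) rearranges the identity to write $br_\ell=a\cdot u$ for an explicit $u\in R$. Since $b$ and $r_\ell$ are nonzero and $R$ is a domain, this exhibits a nonzero element of $aR\cap bR$, yielding the right Ore condition; the left Ore condition is symmetric.

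For (ii), I plan to prove the contrapositive: if $R$ is a domain in which right Ore fails via $aR\cap bR=0$ for some nonzero $a,b$, then the $k$-algebra homomorphism $\varphi\colon k\langle x,y\rangle\to R$, $x\mapsto a$, $y\mapsto b$, is injective, producing the free subalgebra that is forbidden by (ii). The key structural input is that $aR+bR=aR\oplus bR$, so every element of this right ideal has a unique expression $ar+bs$. A preliminary observation is that both $a$ and $b$ must be transcendental over $k$: if $a$ were algebraic, then $k[a]\subseteq R$ would be a finite-dimensional domain, hence a division ring, making $a$ a unit and forcing $bR=aR\cap bR=0$, contradiction. One then proves $\ker\varphi=0$ by decomposing $p\in\ker\varphi$ as $p=c+xp_x+yp_y$ and peeling off the leading letters, using the unique-decomposition property of $aR\oplus bR$ together with the fact that $a$ and $b$ are non-zero-divisors, to extract $\varphi(p_x)=\varphi(p_y)=0$ and trigger an induction on degree.

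The main obstacle is the bookkeeping around constant terms: the naive induction on degree breaks down when the intermediate polynomials $p_x,p_y$ acquire nonzero constants, because $k\cap(aR+bR)$ need not vanish, so one cannot separate the constant, $x$-leading, and $y$-leading summands by the direct-sum decomposition alone. The remedy is to split the analysis into the case where $p$ involves only one of the variables, which is dispatched immediately by transcendence of $a$ or $b$, and the general case, where one iterates the substitution $p\mapsto xp\in\ker\varphi$ (which produces a polynomial without constant term at the cost of raising the degree by one) and inducts on a finer lexicographic invariant such as (degree, number of monomials). Once $\varphi$ is shown injective, the resulting free subalgebra of rank two contradicts the hypothesis of (ii), completing the proof.
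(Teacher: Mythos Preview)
Your reductions (iii)$\Rightarrow$(ii) and (iv)$\Rightarrow$(ii) match the paper exactly, and your direct argument for (i) via the ascending chain $\sum_{i\le n} a^ibR$ is correct and standard; the paper simply cites \cite[2.1.15]{McConnell} for (i) and Jategaonkar's theorem \cite[Proposition~4.13]{KL} for (ii) rather than reproving them.

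The gap is in your sketch for (ii). The obstacle you identify---that peeling $p=c+xp_x+yp_y$ cannot separate the constant $c$ from $aR\oplus bR$---is genuine, but your proposed remedy does not work. Replacing $p$ by $xp$ and then peeling off the leading letter is circular: writing $xp=xq_x+yq_y$ forces $q_x=p$ and $q_y=0$, so you are back where you started, and no lexicographic invariant (in either order) decreases. Left multiplication by a generator can never help here, precisely because every monomial of $xp$ begins with $x$.

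A workable fix is to multiply on the \emph{right} and to induct primarily on the number of monomials. Choose $0\neq p\in\ker\varphi$ with the fewest monomials $m$ (necessarily $m\ge 2$, since any single word in $a,b$ is nonzero in the domain $R$), and among those, one of minimal degree. If all monomials share a first letter, factor it out and drop the degree. If $p$ has no constant term, peeling gives nonzero $p_x,p_y\in\ker\varphi$, each with strictly fewer than $m$ monomials. If $p=c+xp_x+yp_y$ with $c\neq 0$ and, say, $p_x\neq 0$, consider $py=x(p_xy)+y(c+p_yy)$: now peeling yields $p_xy\in\ker\varphi$, nonzero, with $m(p_xy)=m(p_x)\le m-1<m$, contradicting minimality. (If instead $p_x=0$, apply the symmetric move $p\mapsto px$, or use your own observation that $c=-b\,\varphi(p_y)$ makes $b$ a unit.) This is essentially the argument underlying the Jategaonkar result the paper invokes.
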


\begin{proof}
    That (i) is sufficient for $R$ to be an Ore domain follows from \cite[2.1.15]{McConnell}.
    By Jategaonkar's Theorem \cite[Proposition 4.13]{KL}, if a domain does not contain the free algebra in two elements as a subalgebra, it is an Ore domain. This proves that $R$ is an Ore domain if (ii) holds. A subalgebra of a PI-algebra is also a PI-algebra; and the free associative algebras does not satisfy any polynomial identity. So (iii) implies (ii). Finally, if $R$ has subexponential growth, then it cannot contain the free algebra on two elements. Thus (iv) also implies (ii).
\end{proof}

\begin{example}
It is well known that if $\mathfrak{g}$ if a finite dimensional Lie algebra, then $U(\mathfrak{g})$ is an Ore domain. However, this can happen even if $\mathfrak{g}$ is infinite dimensional. If $\mathfrak{L}$ is the infinite dimensional Lie algebra of Example \ref{Smith}, we saw $U(\mathfrak{L})$ has subexponential growth.
\end{example}

\begin{proposition}\label{Proposition 6.6}
    Let $(A,\mathcal{F})$ be a filtered algebra and $(M,\Omega)$ a filtered $A$-module. Then:
    \begin{enumerate}[{\rm (i)}]
    \item $\GK_{\gr_\mathcal{F} \, A}(\gr_\Omega \, M) \leq \GK_A(M)$.
    \item If $\mathcal{F}$ and $\Omega$ are finite-dimensional filtrations, then $\gr_\mathcal{F} \, A$ is affine and $\gr_\Omega \, M$ is a finitely generated $\gr_\mathcal{F} \, A$-module. Moreover, setting $d_M(n)=\operatorname{dim} M_n$,
    \[ \GK_{\gr_\mathcal{F} \, A}(\gr_\Omega \, M) = \GK_A(M)= \gamma(d_M).\]
    \end{enumerate}
\end{proposition}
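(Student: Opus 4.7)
My plan is to handle (ii) first and to deduce (i) from it. Fix a frame $V$ for $A$ and a finite-dimensional generating subspace $F \subset M$. Since $V$ and $F$ are finite-dimensional and the filtrations $\mathcal{F}, \Omega$ are exhausting, there exist integers $r, s \geq 0$ with $V \subset F_r$ and $F \subset M_s$. The filtration properties then give
\[
V^n F \;\subset\; F_{rn} \, M_s \;\subset\; M_{rn+s},
\]
so $d_{F,V}(n) \leq d_M(rn+s)$. Proposition~\ref{growth}(v) then yields $\GK_A M = \gamma(d_{F,V}) \leq \gamma(d_M)$. Setting $\bar V := \bigoplus_{i\leq r} F_i/F_{i-1}$ and $\bar F := \bigoplus_{i \leq s} M_i/M_{i-1}$, the analogous inclusion $(\bar V)^n \bar F \subset \bigoplus_{i \leq rn+s}(M_i/M_{i-1})$ in $\gr_\Omega M$ gives $\dim (\bar V)^n \bar F \leq d_M(rn+s)$ and hence $\GK_{\gr_\mathcal{F} A}(\gr_\Omega M) \leq \gamma(d_M)$.

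Next I would verify the affineness and finite-generation claims by showing that $\bar V$ generates $\gr_\mathcal{F} A$ as an algebra and $\bar F$ generates $\gr_\Omega M$ as a $\gr_\mathcal{F} A$-module, using that $V$ generates $A$ and $F$ generates $M$ together with the compatibility of multiplication in $\gr_\mathcal{F} A$ with the projection $F_n \twoheadrightarrow F_n/F_{n-1}$. For the reverse inequalities, a degree-length argument establishes the inclusion $\bigoplus_{i \leq n}(M_i/M_{i-1}) \subset (\bar V)^n \bar F$: decompose a homogeneous $x \in (\gr_\Omega M)_j$ with $j \leq n$ as $\sum a_k f_k$ with $a_k \in \gr_\mathcal{F} A$ homogeneous of degree $\leq j$ and $f_k \in \bar F$, and write each $a_k$ as a sum of monomials in $\bar V$ of length $\leq n$, padding shorter monomials with $1 \in (\gr_\mathcal{F} A)_0 \subset \bar V$. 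This gives $d_M(n) \leq \dim (\bar V)^n \bar F$ and hence $\gamma(d_M) \leq \GK_{\gr_\mathcal{F} A}(\gr_\Omega M)$. Lifting the same decomposition back to $A$ via the surjections $V^n F \cap M_k \twoheadrightarrow $ graded pieces yields $M_n \subset V^n F$ by induction on $n$, whence $\gamma(d_M) \leq \GK_A M$, completing the chain of equalities in (ii).

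Part (i) in the finite-dimensional case follows directly from (ii). In general, I would lift any frame of $\gr_\mathcal{F} A$ and generating set of $\gr_\Omega M$ to subspaces $V \subset A$, $F \subset M$ of the same dimension; the projection from filtered to graded pieces then forces $\dim (\bar V)^n \bar F \leq \dim V^n F$, so applying $\gamma$ yields $\GK_{\gr_\mathcal{F} A}(\gr_\Omega M) \leq \GK_A M$. The main obstacle I expect is the reverse inclusion $\bigoplus_{i \leq n}(M_i/M_{i-1}) \subset (\bar V)^n \bar F$, and its lift $M_n \subset V^n F$: controlling the length of the monomial expansion in terms of the grading degree is where the padding by $1$ becomes essential, and verifying the bookkeeping works uniformly in $n$ (so the length stays bounded by $n$ and not by something like $rn$) is the delicate point.
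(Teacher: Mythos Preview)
The paper does not prove this proposition; it simply cites \cite[Lemma 6.5, Proposition 6.6]{KL}. So there is no detailed argument to compare against, only the standard one in that reference.

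Your plan has the right shape, but be aware that the proposition as printed is misstated: in Krause--Lenagan the clauses ``$\gr_\mathcal{F} A$ is affine'' and ``$\gr_\Omega M$ is finitely generated'' are \emph{hypotheses} of part~(ii), not conclusions. As conclusions they are false. Take $A=k[x]$ with $F_0=k$ and $F_n=\{\deg\le 2^n-1\}$ for $n\ge 1$; one checks $F_iF_j\subset F_{i+j}$, so this is a finite-dimensional filtration, but in $\gr_\mathcal{F} A$ one computes $\bar x\cdot\overline{x^2}=0$ (since $x^3\in F_2$) and similarly almost all products of positive-degree elements vanish, so $\gr_\mathcal{F} A$ is not affine. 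Moreover $\gamma(d_A)=\gamma(2^n)=\infty\neq 1=\GK(A)$, so the asserted equality fails too. Your proposed verification that $\bar V$ generates $\gr_\mathcal{F} A$ therefore cannot succeed, and the ``compatibility with the projection $F_n\twoheadrightarrow F_n/F_{n-1}$'' you invoke breaks down precisely here: a product $v_1\cdots v_n$ of elements of $V$ may lie in a filtration piece far below the sum of the degrees, and its symbol in $\gr A$ is then unrelated to $\bar v_1\cdots\bar v_n$.

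Under the corrected hypotheses there is a second, independent gap. For the reverse inequality $\gamma(d_M)\le\GK_A M$ you want $M_n\subset V^nF$, but with an \emph{arbitrary} frame $V$ of $A$ and generating set $F$ of $M$ this fails (the same counterexample applies: $M_n=F_n$ has dimension $2^n$ while $V^nF$ has dimension $n+1$). The fix is not to start from an arbitrary frame but to take $V=F_r$ and $F=M_s$ themselves, where $r,s$ are chosen so that $(\gr A)_{\le r}$ generates $\gr_\mathcal{F} A$ and $(\gr M)_{\le s}$ generates $\gr_\Omega M$; this is exactly where the hypotheses enter. With that choice your graded inclusion $(\gr M)_k\subset(\bar V)^k\bar F$ does lift: each monomial $\bar w_1\cdots\bar w_\ell\bar f$ of total degree $k$ lifts to $w_1\cdots w_\ell f\in F_r^\ell M_s\cap M_k$, so $M_k\subset V^nF+M_{k-1}$ for $k\le n$, and induction gives $M_n\subset V^nF$. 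Your approach to part~(i), lifting a frame of $\gr A$ and comparing dimensions via the induced filtration on $V^nF$, is correct.
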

\begin{proof}
    \cite[Lemma 6.5, Proposition 6.6]{KL}.
\end{proof}

\begin{example}
    Let $\mathfrak{g}$ be a finite dimensional Lie algebra. Call $U$ its enveloping algebra. Define a filtration $\mathcal{F}=\{ U_i \}_{i=0}^\infty$, with $U_0=0$, $U_n=(k \oplus \mathfrak{g})^n$ , $n>0$. Then, the PBW-Theorem says exactly that $\operatorname{Sym}(\mathfrak{g}) \simeq gr_\mathcal{F} \, U$. Since $GK(\operatorname{Sym}(\mathfrak{g}))=\operatorname{Krull} \, \operatorname{Sym}(\mathfrak{g})=\dim \, \mathfrak{g}$, $GK \, U(\mathfrak{g})= \dim \, \mathfrak{g}.$
\end{example}

\begin{proposition}\label{good-filtrations}
    Let $(A,\mathcal{F})$ be a filtered algebra and $M$ be an $A$-module. The following statements are equivalent:
    \begin{enumerate}[{\rm (a)}]
    \item $M$ is a finitely generated $A$-module.
    \item $M$ admits a filtration $\Omega$ such that $\gr_\Omega \, M$ is a finitely generated $\gr_\mathcal{F} \, A$-module. In this case we say that $\Omega$ is a \emph{good filtration}.
    \end{enumerate}
\end{proposition}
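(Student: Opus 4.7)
The plan is to prove the two implications separately, using the obvious construction of an $A$-module filtration from a finite generating set, and conversely lifting homogeneous generators of the associated graded.

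For (b) $\Rightarrow$ (a), I would start from a finite generating set $\{\bar m_1,\dots,\bar m_r\}$ of $\gr_\Omega M$ as a $\gr_\mathcal{F} A$-module, which can be assumed to be homogeneous, say with $\bar m_i \in M_{n_i}/M_{n_i-1}$. Pick any lifts $m_i \in M_{n_i}$. The claim is that $m_1,\dots,m_r$ generate $M$ over $A$, proved by induction on the filtration degree: if $m \in M_n$ with $n$ minimal, then the class $\bar m \in M_n/M_{n-1}$ can be written as $\sum_i \bar a_i \bar m_i$ with $\bar a_i \in A_{n-n_i}/A_{n-n_i-1}$; lifting $\bar a_i$ to $a_i \in A_{n-n_i}$, the element $m - \sum a_i m_i$ lies in $M_{n-1}$, and by induction it lies in $\sum A m_i$.

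For (a) $\Rightarrow$ (b), choose any finite generating set $m_1,\dots,m_r$ for $M$ as an $A$-module, assign arbitrary integers $n_i \geq 0$ (for convenience $n_i=0$), and define
\[ M_n = \sum_{i=1}^r A_{n-n_i}\, m_i, \]
with the convention $A_j=0$ for $j<0$. One then verifies the filtration axioms $M_n \subset M_{n+1}$, $A_jM_n \subset M_{n+j}$, and $\bigcup_n M_n = M$ directly from $A_iA_j \subset A_{i+j}$ and the fact that the $m_i$ generate $M$. This is a finite-dimensional filtration whenever $\mathcal{F}$ is. To see that $\gr_\Omega M$ is finitely generated by the classes $\bar m_i \in M_{n_i}/M_{n_i-1}$, take any homogeneous element $\bar x \in M_n/M_{n-1}$, lift to $x = \sum a_i m_i \in M_n$ with $a_i \in A_{n-n_i}$, and observe that modulo $M_{n-1}$ one has $\bar x = \sum \bar a_i \bar m_i$, where $\bar a_i$ is the image of $a_i$ in $A_{n-n_i}/A_{n-n_i-1}$ (the terms with $a_i \in A_{n-n_i-1}$ simply drop out, as they sit in $M_{n-1}$).

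There is no genuinely hard step here; the only subtlety is notational bookkeeping in checking that the candidate filtration $M_n = \sum_i A_{n-n_i} m_i$ really is a filtration compatible with $\mathcal{F}$, and that the passage to $\gr_\Omega M$ sends $A_{n-n_i} m_i$ to $\bar a_i \bar m_i$ without loss of information. Since the argument matches the cited reference \cite[Proposition 6.6]{KL}, I would close by simply invoking it, with the construction above as the concrete content.
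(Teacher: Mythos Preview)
Your argument is correct and is the standard proof; the paper itself does not spell it out but simply cites \cite[Lemma 6.7]{KL} (note: Lemma 6.7, not Proposition 6.6 as you wrote at the end), whose content is precisely the construction you give. One small phrasing issue: in the (b)$\Rightarrow$(a) direction, ``$m\in M_n$ with $n$ minimal'' should read as an induction on $n$ showing $M_n\subset\sum_i Am_i$, with base case $M_{-1}=0$.
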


\begin{proof}
    \cite[Lemma 6.7]{KL}.
\end{proof}

\begin{definition}
    Let $A$ be a filtered algebra and $M$ be an $A$-module. Two filtrations $\Omega=\{ M_i \}_{i \geq 0 }$ and $\Sigma= \{ N_i \}_{i \geq 0}$ on $M$ are called \emph{equivalent} if there is a natural number $n$ such that for all $i\in\mathbb{N}$,
    \[N_i \subset M_{i+n} \quad\text{and}\quad M_i \subset N_{i+n}.\]
\end{definition}

\begin{proposition}
    Let $A$ be an algebra with a finite-dimensional filtration and $M$ an $A$-module. Two finite-dimensional filtrations on $M$ are equivalent if their associated graded modules are finitely generated over the associated graded algebra of $A$.
\end{proposition}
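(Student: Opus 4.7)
My plan is to exploit the good-filtration hypothesis together with the fact that each piece of a finite-dimensional filtration is itself finite-dimensional and therefore absorbed by the other filtration. Denote the filtrations by $\Omega=\{M_i\}$ and $\Sigma=\{N_i\}$, and the filtration on $A$ by $\mathcal{F}=\{F_i\}$; by Proposition~\ref{good-filtrations}, both $\Omega$ and $\Sigma$ are good filtrations.

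First I would pick a finite set of homogeneous generators $\bar{g}_1,\ldots,\bar{g}_r$ of $\gr_\Omega M$ over $\gr_\mathcal{F} A$, of degrees $d_1,\ldots,d_r$, and let $n_0=\max_j d_j$. Lift them to $g_j\in M_{d_j}\subset M_{n_0}$. Since $M_{n_0}$ is finite-dimensional and $M=\bigcup_i N_i$, there exists $s\in\mathbb{N}$ with $M_{n_0}\subset N_s$; in particular every $g_j$ lies in $N_s$. Set $C:=s+n_0-\min_j d_j$, and observe $C\geq s$.

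Next I would prove by induction on $i\geq 0$ that $M_{n_0+i}\subset N_{C+i}$. The base case $i=0$ is immediate from $M_{n_0}\subset N_s\subset N_C$. For the inductive step, take $m\in M_{n_0+i}$: write its image in $(\gr_\Omega M)_{n_0+i}$ as $\sum_j\bar{a}_j\bar{g}_j$ with $\bar{a}_j\in(\gr_\mathcal{F}A)_{n_0+i-d_j}$, lift each to $a_j\in F_{n_0+i-d_j}$, and obtain $m-\sum_j a_jg_j\in M_{n_0+i-1}$. The induction hypothesis gives $M_{n_0+i-1}\subset N_{C+i-1}\subset N_{C+i}$, while $a_j g_j\in F_{n_0+i-d_j}N_s\subset N_{n_0+i-d_j+s}\subset N_{C+i}$ by the choice of $C$. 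Consequently $M_j\subset N_{j+(C-n_0)}$ for $j\geq n_0$, and $M_j\subset M_{n_0}\subset N_s$ otherwise, so a single constant $n_1$ satisfies $M_j\subset N_{j+n_1}$ for all $j$.

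The symmetric argument, interchanging the roles of $\Omega$ and $\Sigma$, produces a constant $n_2$ with $N_j\subset M_{j+n_2}$; setting $n:=\max(n_1,n_2)$ then witnesses the equivalence of $\Omega$ and $\Sigma$. The only delicate point is bookkeeping the shift constants so that the induction closes, which is why I introduce the uniform offset $C$ absorbing the worst-case degree gap among the chosen generators; once $C$ is in hand, the rest is routine.
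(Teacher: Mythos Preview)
Your argument is correct: you correctly use that $\gr_\Omega M$ is generated in degrees $\le n_0$ to run the induction, and the bookkeeping with $C=s+n_0-\min_j d_j$ is exactly what is needed to ensure $F_{n_0+i-d_j}N_s\subset N_{C+i}$ for every $j$. The only cosmetic point is that you can simply take $n_1=s$ (since $C-n_0=s-\min_j d_j\le s$), but your phrasing ``a single constant $n_1$'' is fine.

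By way of comparison, the paper does not give an argument at all: its proof is a bare citation to \cite[Corollary~6.12]{KL}. Your write-up is essentially the standard proof one finds in Krause--Lenagan, so there is no genuine methodological difference---you have simply supplied the details that the paper outsourced.
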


\begin{proof}
    \cite[Corollary 6.12]{KL}.
\end{proof}

\section{Very nice and modest algebras} \label{sec:verynice}

\subsection{Definitions and First Results} 

Let $\AMod$ denote the category of all $A$-modules, and $\Amod$ the full subcategory whose objects are the finitely generated $A$-modules.

\begin{definition}
    Let $A$ be an affine algebra. We say that \emph{$\GK$ is exact on $\Amod$}
    \footnote{In \cite{KL}, the authors say \emph{Gelfand-Kirillov dimension is exact for finitely generated modules}.} if for any short exact sequence of finitely generated $A$-modules
    \[ 0 \rightarrow M' \rightarrow M \rightarrow M'' \rightarrow 0\]
    we have 
    \[\GK(M)=\max\{ \GK(M'), \GK(M'') \}.\]
\end{definition}

In general, we only have $\GK(M) \geq \max \{\GK(M'),\GK(M'')$\}, by \cite[Proposition 5.1c)]{KL}. The two most general situation where $\GK$ is exact on $\Amod$ are given by theorems of Tauvel and Lenegan.

\begin{theorem}[Tauvel]
Let $A$ be an affine algebra with finite-dimensional filtration $\mathcal{F}$ such that the associated graded algebra $\gr_\mathcal{F} \, A$ is finitely generated Noetherian. Then $\GK$ is exact on $\Amod$.
\end{theorem}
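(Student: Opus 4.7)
The plan is to reduce the problem to a statement about $\gamma$ of dimension functions, using good finite-dimensional filtrations and the additivity of dimension in a short exact sequence of finite-dimensional vector spaces. Since the reverse inequality $\GK(M) \ge \max\{\GK(M'),\GK(M'')\}$ is already known, I only need the inequality $\GK(M) \le \max\{\GK(M'),\GK(M'')\}$.

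First, starting from a short exact sequence $0 \to M' \to M \to M'' \to 0$ of finitely generated $A$-modules, I would pick a good finite-dimensional filtration $\Omega = \{M_n\}$ on $M$, which exists by Proposition \ref{good-filtrations} together with the fact that $\mathcal{F}$ is finite-dimensional (so finitely generated graded modules over $\gr_\mathcal{F} A$ are finite-dimensional in each degree). Then I would induce filtrations on $M'$ and $M''$ in the standard way: $M'_n := M' \cap M_n$ and $M''_n := (M_n + M')/M'$. A direct check shows that the natural map
\[
\gr_{\Omega'} M' \;\hookrightarrow\; \gr_{\Omega} M
\]
is an injection of graded $\gr_\mathcal{F} A$-modules (kernel chase on graded pieces), and the natural map $\gr_{\Omega} M \twoheadrightarrow \gr_{\Omega''} M''$ is a surjection of graded $\gr_\mathcal{F} A$-modules.

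The key place where the Noetherian hypothesis on $\gr_\mathcal{F} A$ enters is precisely here: the submodule $\gr_{\Omega'} M'$ of the finitely generated $\gr_\mathcal{F} A$-module $\gr_\Omega M$ is then itself finitely generated. By Proposition \ref{good-filtrations} this tells us that $\Omega'$ is a good filtration on $M'$. The filtration $\Omega''$ is good automatically since $\gr_{\Omega''} M''$ is a quotient of $\gr_\Omega M$. Both induced filtrations are finite-dimensional because $\gr_\mathcal{F} A$ is finitely generated (and hence each graded piece of a finitely generated graded module is finite-dimensional).

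Next, for every $n \ge 0$ we have a short exact sequence of finite-dimensional vector spaces $0 \to M'_n \to M_n \to M''_n \to 0$, so
\[
d_M(n) \;=\; d_{M'}(n) + d_{M''}(n),
\]
where $d_N(n) := \dim N_n$ for the relevant filtration. By Proposition \ref{growth}(ii), $\gamma(d_M) = \max\{\gamma(d_{M'}), \gamma(d_{M''})\}$. Finally, since $\Omega, \Omega', \Omega''$ are good finite-dimensional filtrations, Proposition \ref{Proposition 6.6}(ii) identifies each $\gamma$ with the Gelfand--Kirillov dimension of the corresponding module, yielding $\GK(M) = \max\{\GK(M'), \GK(M'')\}$, as desired.

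The main obstacle is verifying that $\Omega'$ is good; this is the only step that genuinely requires the Noetherian assumption on $\gr_\mathcal{F} A$. Everything else is a standard compatibility between induced filtrations and short exact sequences, together with the dimension-additivity trick that reduces the problem to Proposition \ref{growth}(ii).
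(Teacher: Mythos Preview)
Your proof is correct and follows precisely the standard argument found in the references the paper cites (Tauvel, and \cite[Theorem 6.14]{KL}); indeed the paper reproduces essentially the same argument later in the proof of Theorem~\ref{theorem-Hilbert-Samuel}. One cosmetic point: the finite-dimensionality of $\Omega'$ and $\Omega''$ follows immediately from $M'_n\subseteq M_n$ and $M''_n$ being a quotient of $M_n$, rather than from finite generation of $\gr_\mathcal{F} A$.
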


\begin{proof}
   \cite{Tauvel}, \cite[Theorem 6.14]{KL}.
\end{proof}

\begin{theorem}[Lenagan]
    Let $A$ be a Noetherian affine PI-algebra. Then $\GK$ is exact on $\Amod$.
\end{theorem}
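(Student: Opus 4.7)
The plan is to reduce the exactness statement to the corresponding property of classical Krull dimension, using the deep fact from PI theory that for a finitely generated module $M$ over a Noetherian affine PI algebra $A$, the Gelfand-Kirillov dimension $\GK(M)$ is integer-valued and equals the length of the longest chain of prime ideals in $\Spec(A/\Ann_A(M))$. This identification rests on Shirshov's height theorem, which bounds $\GK(A)$ by the transcendence degree of the center, combined with Posner's theorem, which provides a central simple ring of fractions for each prime quotient.

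First I would establish the identification in the prime case: if $P \subset A$ is a prime ideal, then $A/P$ is a prime Noetherian affine PI algebra, and $\GK(A/P)$ equals the transcendence degree over $k$ of the center of its Goldie ring of fractions, which in turn equals the classical Krull dimension of $A/P$. Next, because $A$ is Noetherian, every finitely generated $A$-module $M$ admits a prime cyclic filtration $0 = M_0 \subsetneq M_1 \subsetneq \cdots \subsetneq M_n = M$ with $M_i/M_{i-1} \cong A/P_i$ for primes $P_i \in \Spec(A)$. Combining the elementary inequality $\GK(M) \geq \max_i \GK(M_i/M_{i-1})$ (which is valid in general by \cite[Proposition 5.1c)]{KL}) with an induction that controls $\GK(M)$ from above by $\max_i \GK(A/P_i)$ (using that an extension of two modules annihilated by prime ideals $P, Q$ is annihilated by $PQ$, and that $\GK(A/PQ)$ is bounded by the maximum of $\GK(A/P)$ and $\GK(A/Q)$ in the PI Noetherian setting via the prime-case identification applied to $A/PQ$), one obtains
\[
\GK(M) = \max_{1 \leq i \leq n} \GK(A/P_i).
\]

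Given this, for a short exact sequence $0 \to M' \to M \to M'' \to 0$ I would construct a prime cyclic filtration of $M$ by concatenating one of $M'$ (placed below $M'$ in $M$) with a lift of one of $M''$ (placed above $M'$). The set of prime quotients appearing in this filtration of $M$ is then exactly the union of the corresponding sets for $M'$ and $M''$, so
\[
\GK(M) = \max\bigl\{\,\max_j \GK(A/P'_j),\; \max_k \GK(A/P''_k)\,\bigr\} = \max\{\GK(M'), \GK(M'')\}.
\]

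The principal obstacle is the prime-case identification $\GK(A/P) = \text{cl.K.dim}(A/P)$: this is the truly PI-theoretic input and requires both Shirshov's height theorem and Posner's theorem, together with the fact that for affine commutative domains (namely, for the center of the Goldie quotient ring of $A/P$ intersected with $A/P$) the Krull dimension equals the transcendence degree. Once this ingredient is in hand, the rest of the argument is formal, depending only on the existence of prime cyclic filtrations over Noetherian rings and on the elementary behavior of $\max$ under taking unions.
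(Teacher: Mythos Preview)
The paper itself gives no argument: it simply cites \cite[Lemmas 10.13, 10.14, Theorem 10.15]{KL} and \cite{Lenagan}. Your outline---establish $\GK(A/P)=\mathrm{cl.K.dim}(A/P)$ for primes $P$ via Posner and Shirshov, prove $\GK(M)=\max_i\GK(A/P_i)$ for a prime cyclic filtration, then concatenate filtrations---is the skeleton of that standard proof, and Steps~1 and~3 are fine.

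The gap is in Step~2, specifically the upper bound $\GK(M)\le\max_i\GK(A/P_i)$. You argue that the extension is annihilated by $P_1\cdots P_n$, so $\GK(M)\le\GK(A/P_1\cdots P_n)$, and then claim $\GK(A/P_1\cdots P_n)\le\max_i\GK(A/P_i)$ ``via the prime-case identification applied to $A/PQ$.'' But $A/PQ$ is not prime, so the identification you established in Step~1 does not apply. One might try to get the inequality from the exact sequence $0\to Q/PQ\to A/PQ\to A/Q\to 0$ (where $Q/PQ$ is an $A/P$-module), but bounding $\GK(A/PQ)$ by the maximum of the outer terms is precisely the exactness statement you are proving. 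Equivalently, extending the equality $\GK=\mathrm{cl.K.dim}$ from prime quotients to arbitrary quotients of $A$ already requires controlling $\GK$ across a nilpotent-radical filtration, which is again a module-exactness statement.

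The argument in \cite{KL} avoids this circularity by inserting a genuinely PI-specific growth estimate: for a \emph{prime} Noetherian affine PI algebra $B$, one uses the trace ring (which is module-finite over its affine commutative center) to transfer growth computations to the commutative setting and deduce that every finitely generated torsion $B$-module $N$ satisfies $\GK(N)<\GK(B)$. This lemma is what lets one prove $\GK(M)=\max\{\GK(A/P):P\text{ minimal over }\Ann(M)\}$ directly, without presupposing exactness; your concatenation argument then completes the proof. So the missing ingredient in your sketch is not the Krull-dimension bookkeeping but this reduction-to-commutative growth bound, and you should make that step explicit rather than appealing to the prime identification on a non-prime quotient.
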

\begin{proof}
    \cite[Lemmas 10.13, 10.14, Theorem 10.15]{KL}, \cite{Lenagan}.
\end{proof}

It is still an open problem whether $\GK$ is exact on $\Amod$ for every Noetherian algebra $A$.

In the following, we introduce an abstract class of algebras which have the common features of many of the algebras discussed in \cite{KL}, \cite{Lorenz}, \cite{BD} and \cite[Chapter 8]{McConnell}.

\begin{definition}\label{very-nice}
    An affine algebra $A$ is called \emph{very nice} if
    \begin{enumerate}[{\rm (i)}]
    \item $\GK$ is exact on $\Amod$,
    \item $\GK(M)$ is a non-negative integer for every finitely generated $A$-module $M$, 
    \item there is a function $e: \Amod \rightarrow \mathbb{N}$, called \emph{multiplicity}, satisfying the following properties for any short exact sequence $0\to M'\to M\to M''\to 0$ of finitely generated $A$-modules:
    \begin{enumerate}[{\rm (a)}]
    \item if $\GK(M')< \GK(M)=\GK(M'')$, then $e(M)=e(M'')$;
    \item if $\GK(M')=\GK(M)>\GK(M'')$, then $e(M')=e(M)$;
    \item if $\GK(M')=\GK(M)=\GK(M'')$, then $e(M)=e(M')+e(M'')$;
    \item $e(M)=0$ if and only if $M=0$.
    \end{enumerate}
    \end{enumerate}
    If we require only that the multiplicities $e(M)$ belong to $\mathbb{Q}_{\ge 0}$, we will call the algebra \emph{modest}.
\end{definition}

The notion of multiplicity appeared first in commutative algebra, in the work of David Hilbert, and is a very useful tool in this subject (see \cite[Section 14]{Matsumura}).
The above conditions were explored in a more restrictive setting in \cite[Section~3.1]{FS4}.

\begin{example}
    Let $A$ be a finite dimensional algebra. Then $\Amod$ is the cateogry of finite dimensional modules. Conditions (i) and (ii) in the definition of very nice algebras are trivially satisfied, since for every $M \in \Amod$, since it is finite dimensional, $\GK \, M=0$. We can define a multiplicity function on a module $M$ by setting $e(M):=\dim M$.
\end{example}

\begin{proposition}\label{DCC}
    Let $A$ be a very nice algebra with multiplicity function $e$, and let $M$ be a finitely generated $A$-module. Put $d=\GK(M)$. Let
    \[ M=M_0 \supsetneq M_1 \supsetneq \ldots \supsetneq M_n \]
    be a strictly decreasing chain of submodules with $\GK(M_i/M_{i+1})=d$, $0\le i<n$. Then:
    \begin{enumerate}[{\rm (a)}]
    \item  $n \leq e(M)$.
    \end{enumerate}
\end{proposition}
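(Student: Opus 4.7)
The plan is to induct on $n$. The base case $n=0$ is immediate because $e(M)\ge 0$ holds tautologically (values of $e$ lie in $\mathbb{N}$).

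For the inductive step, the first task is to pin down $\GK(M_i)$ for $0\le i\le n-1$. Applying the exactness of $\GK$ on $\Amod$ to the sequence $0\to M_{i+1}\to M_i\to M_i/M_{i+1}\to 0$ gives $\GK(M_i)\ge \GK(M_i/M_{i+1})=d$, while applying it to $0\to M_i\to M\to M/M_i\to 0$ gives $\GK(M_i)\le \GK(M)=d$. Hence $\GK(M_i)=d$ for $0\le i\le n-1$. Note that we do not need control of $\GK(M_n)$.

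Next, I would feed the short exact sequence $0\to M_1\to M\to M/M_1\to 0$ into the multiplicity axioms. For $n\ge 2$ the previous step has established $\GK(M_1)=\GK(M/M_1)=\GK(M)=d$, so case (c) of Definition \ref{very-nice} yields the additive formula
\[
e(M)=e(M_1)+e(M/M_1).
\]
Since $M/M_1\ne 0$, axiom (d) combined with $e$ taking values in $\mathbb{N}$ forces $e(M/M_1)\ge 1$. The chain $M_1\supsetneq M_2\supsetneq\cdots\supsetneq M_n$ satisfies the hypotheses of the proposition with the same $d=\GK(M_1)$, so the induction hypothesis gives $n-1\le e(M_1)$, and combining yields $n\le e(M)$. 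The case $n=1$ is handled separately: there is only the sequence $0\to M_1\to M\to M/M_1\to 0$ with $\GK(M/M_1)=d$; regardless of whether $\GK(M_1)=d$ or $\GK(M_1)<d$, axiom (c) respectively (a) gives $e(M)\ge e(M/M_1)\ge 1=n$.

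The main subtlety is not really an obstacle but lies in ensuring that the additive axiom (c) can actually be invoked at each stage, which requires the preliminary step establishing $\GK(M_i)=d$ throughout the chain; without that, one could slip into case (a) and lose the additive contribution from $e(M/M_1)$. The isolated handling of $n=1$ is precisely because below $M_1$ there is no further quotient forcing $\GK(M_1)=d$.
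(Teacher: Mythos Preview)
Your proof is correct and is essentially the same argument as the paper's: the paper writes the telescoping identity $e(M)=\sum_{i=0}^{n-1}e(M_i/M_{i+1})+e(M_n)$ directly and observes each summand is at least $1$, while your induction peels off one layer at a time, but the underlying use of the additivity axiom (c) is identical. If anything, you are more careful than the paper in verifying that $\GK(M_i)=d$ so that axiom (c) actually applies, and in handling the edge case where $\GK(M_1)$ (for $n=1$) might drop below $d$.
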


\begin{proof}
(a) $e(M) = \sum_{j=0}^{i-1}e(M_j/M_{j+1})+e(M_n)=e(M_0/M_1) + e(M_1/M_2) + \cdots + e(M_n)$. Each of these summands is at least 1. So $n \leq e(M)$.
\end{proof}

That is, as in  \cite[8.3.17]{McConnell}, very nice algebras are finitely partitive:

\begin{definition}\label{partitive}
An affine algebra $A$ with finite Gelfand-Kirillov dimension is called \emph{finitely partitive} if, given a finitely generated $A$-module $M$, it has integer Gelfand-Kirillov dimension and there exists $n>0$ such that, for every chain
\[ M=M_0 \supsetneq M_1 \supsetneq M_2 \supsetneq \cdots \supsetneq M_m\] with $\GK(M_i/M_{i+1})=\GK(M)$, we have $m \leq n$.
\end{definition}

The same can be shown for modest algebras.

\begin{proposition}
    Let $A$ be a modest algebra. Then it is finitely partitive.
\end{proposition}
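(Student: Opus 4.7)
The plan is to imitate the proof of Proposition \ref{DCC} for very nice algebras, relying on the additivity axioms (iii)(a)--(c) of the multiplicity function $e$. Given a strictly decreasing chain
\[
M = M_0 \supsetneq M_1 \supsetneq \cdots \supsetneq M_m
\]
with $\GK(M_j/M_{j+1}) = d := \GK(M)$ for $0 \le j < m$, I would first invoke GK-exactness (Definition \ref{very-nice}(i)) applied inductively to the short exact sequences $0 \to M_{j+1} \to M_j \to M_j/M_{j+1} \to 0$ to conclude $\GK(M_j) = d$ for all $j < m$. Applying axiom (c) when $\GK(M_{j+1}) = d$, and axiom (a) when $\GK(M_{j+1}) < d$, and telescoping from $j = m-1$ down to $j = 0$, produces the key inequality
\[
e(M) \;\geq\; \sum_{j=0}^{m-1} e(M_j/M_{j+1}),
\]
with each summand strictly positive by axiom (d).

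In the very nice case, integrality gives $e(M_j/M_{j+1}) \geq 1$, so $m \leq e(M)$ immediately. The main obstacle in the modest case is that a priori the rational summands admit no uniform lower bound, so the telescoping sum alone is not enough. To overcome this, I would argue that the multiplicities appearing among the subquotients of a fixed $M$ form a discrete subset of $\mathbb{Q}_{\ge 0}$: concretely, I would try to establish that they all lie in $\tfrac{1}{N}\mathbb{N}$ for some integer $N = N(M) > 0$. Such discreteness is expected from the fact that in practice the multiplicity in a modest algebra arises from a Hilbert-Samuel polynomial (whose Bernstein number is an integer, with the multiplicity obtained after dividing by a fixed factorial), and it can be packaged cleanly as a property of $e$ restricted to the subquotients of $M$.

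Once the discreteness is in hand, every nonzero $e(M_j/M_{j+1})$ is bounded below by $1/N$, so the key inequality yields
\[
m \;\leq\; N\cdot e(M),
\]
and this uniform bound, depending only on $M$, is precisely the finite-partitivity condition of Definition \ref{partitive}. The hard part is thus not the combinatorics of the chain but the preliminary structural claim that a fixed module has its subquotient multiplicities lying in $\tfrac{1}{N}\mathbb{N}$; all the rest follows by the additivity bookkeeping copied verbatim from the very nice case.
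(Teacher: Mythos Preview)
Your telescoping argument and your identification of the obstacle are both correct, but the step you flag as ``the hard part'' is a genuine gap, not a technicality. The discreteness claim --- that the multiplicities of subquotients of a fixed $M$ lie in $\tfrac{1}{N}\mathbb{N}$ for some $N=N(M)$ --- cannot be extracted from the axioms of Definition~\ref{very-nice} alone. Those axioms only require $e$ to take values in $\mathbb{Q}_{\ge 0}$ and to satisfy the additivity rules (a)--(d); nothing in them prevents the denominators of $e$ on subquotients of $M$ from being unbounded. Your appeal to what happens ``in practice'' with Hilbert--Samuel polynomials is exactly where the abstract argument stops being an argument.

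The paper does not supply a self-contained proof either: it simply cites \cite[Lemma~3.2]{BD}. In Bavula's framework --- and in every concrete instance treated later in this paper --- the bounded-denominator property is built into the construction of $e$, not deduced from the axioms. For example, the remark after Theorem~\ref{generalized somewhat commutative algebra} records that for vaguely commutative algebras one has $e(M)\in \ell^{-\GK(M)}\mathbb{N}$ with $\ell$ determined by the filtration on $A$; this gives a uniform $N=\ell^{d}$ for all modules of GK-dimension $d$, and then your inequality $m\le N\,e(M)$ is immediate. So your outline is exactly the right shape, but the discreteness you need is an input coming from the concrete origin of $e$ (as in \cite{BD}), not a lemma you can hope to prove from Definition~\ref{very-nice} as stated.
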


\begin{proof}
    \cite[Lemma 3.2]{BD}.
\end{proof}

It is an open problem if there exists algebras with finite Gelfand-Kirillov dimension which are not finitely partitive.

\begin{proposition}
    Let $A$ a finitely partitive algebra with finite Gelfand-Kirillov dimension. Then $\mathcal{K}(A) \leq \GK(A)$ and, for every finitely generated module $M$, $\mathcal{K}(M) \leq \GK(M)$ --- where $\mathcal{K}(\cdot)$ is the Krull dimension in the sense of Gabriel-Rentschler (cf. \cite[Chapter 6]{McConnell}).
\end{proposition}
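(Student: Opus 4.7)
The plan is to establish the stronger module-theoretic statement $\mathcal{K}(M) \leq \GK(M)$ for every finitely generated $A$-module $M$, from which $\mathcal{K}(A) \leq \GK(A)$ follows by viewing $A$ as a left module over itself. Since $A$ is finitely partitive, $\GK(M)$ is a non-negative integer bounded by $\GK(A) < \infty$, so the argument reduces to ordinary induction on $d := \GK(M)$.

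For the base case $d = 0$, I would show that $M$ has finite length. Any strictly descending chain $M = M_0 \supsetneq M_1 \supsetneq \cdots \supsetneq M_n$ of submodules has each subfactor $M_i/M_{i+1}$ nonzero and finitely generated with $\GK(M_i/M_{i+1}) \leq \GK(M) = 0$, and since nonzero finitely generated modules over an affine algebra have $\GK \geq 0$, each subfactor has $\GK = 0$. The finitely partitive hypothesis therefore bounds $n$ uniformly, so $M$ has finite length and $\mathcal{K}(M) \leq 0$, using the convention $\mathcal{K}(0) = -1$.

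For the inductive step, assume the inequality holds for all finitely generated modules of Gelfand-Kirillov dimension strictly less than $d$, and take any descending chain $M = M_0 \supseteq M_1 \supseteq \cdots$ of submodules. The finitely partitive property, applied to $M$, forces the set of indices $i$ with $M_i \supsetneq M_{i+1}$ and $\GK(M_i/M_{i+1}) = d$ to be finite. For the cofinitely many remaining indices, either $M_i = M_{i+1}$, so that $\mathcal{K}(M_i/M_{i+1}) = -1$, or else $\GK(M_i/M_{i+1}) < d$, in which case the inductive hypothesis yields $\mathcal{K}(M_i/M_{i+1}) \leq d - 1$. By the definition of the Gabriel--Rentschler Krull dimension, this gives $\mathcal{K}(M) \leq d = \GK(M)$.

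The principal technical obstacle is that the inductive hypothesis requires the subquotients $M_i/M_{i+1}$ to be finitely generated; this is automatic when $A$ is Noetherian, which is the case in all applications relevant to the paper, but in the fully general setting must be handled by writing both $\mathcal{K}$ and $\GK$ of an arbitrary submodule as the supremum over its finitely generated submodules so that each step can be reduced to the finitely generated case.
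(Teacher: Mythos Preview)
Your argument is the standard induction on $\GK(M)$ and is correct; it is precisely the proof found in McConnell--Robson \S 8.3.18, which is all the paper gives by way of proof (a bare citation). Your identification of the Noetherian hypothesis as the point where finite generation of the subquotients is needed is also accurate, and matches the standing assumptions in that reference.
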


\begin{proof}
    \cite[\S 8.3.18]{McConnell}.
\end{proof}

\subsection{Algebras admitting Hilbert-Samuel polynomials}

\begin{definition}\label{definition-Hilbert-Samuel}
    Let $A$ be an affine algebra with a finite dimensional filtration $\mathcal{F}_A$ such that $\gr_{\mathcal{F}_A} \, A$ is affine and Noetherian. We say that $A$ \emph{admits Hilbert-Samuel polynomials} if for every $M \in \Amod$, and every finite dimensional good filtration $\Omega=\{ M_i \}_{i \geq 0}$ on it, there exists a polynomial $H_{M, \Omega}(x): \mathbb{N} \rightarrow \mathbb{R}$ with rational coeficients such that $H_{M, \Omega}(n)=\dim \, M_n$ for $n\gg 0.$
\end{definition}

\begin{theorem}\label{theorem-Hilbert-Samuel}
Let $A$ be an affine algebra that admits Hilbert-Samuel polynomials. Then it is very nice.
\end{theorem}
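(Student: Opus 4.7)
The plan is to extract the multiplicity function directly from the leading binomial coefficient of the Hilbert-Samuel polynomial. Given $M \in \Amod$, by Proposition~\ref{good-filtrations} $M$ admits a good filtration; taking a finite spanning set $\{f_1,\dots,f_s\}$ and setting $M_n := F_n\cdot\Span\{f_1,\dots,f_s\}$ produces a good filtration that is in addition finite-dimensional. The hypothesis of Definition~\ref{definition-Hilbert-Samuel} then gives a polynomial $H_{M,\Omega}(x) = \sum_{i=0}^d a_i\binom{x}{i}$ (with $a_d\ne 0$) agreeing with $\dim M_n$ for $n\gg 0$. Since $\dim M_n$ is a non-decreasing sequence of non-negative integers, Proposition~\ref{convenient-proposition}(c)(d) forces $a_i\in\mathbb{Z}$ and $a_d\in\mathbb{Z}_{>0}$ (the Bernstein number). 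Define $e(M):= a_d$ for $M\ne 0$, and $e(0):=0$. Condition (ii) is immediate: by Proposition~\ref{Proposition 6.6}(ii) and Proposition~\ref{growth}(iii), $\GK(M) = \gamma(\dim M_n) = \deg H_{M,\Omega} = d \in \mathbb{Z}_{\ge 0}$.

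Next I check that $e(M)$ is independent of the choice of good finite-dimensional filtration $\Omega$. By the proposition immediately following the definition of equivalent filtrations, any two good finite-dimensional filtrations $\Omega=\{M_n\}$ and $\Sigma=\{N_n\}$ on $M$ are equivalent: there exists $r$ with $N_n\subset M_{n+r}$ and $M_n\subset N_{n+r}$. Hence $H_{M,\Sigma}(n)\le H_{M,\Omega}(n+r)$ and $H_{M,\Omega}(n)\le H_{M,\Sigma}(n+r)$ for $n\gg 0$. Comparing these polynomial inequalities (both via degree and asymptotic leading term $a_d x^d/d!$) forces $\deg H_{M,\Omega}=\deg H_{M,\Sigma}$ and the top binomial coefficients $a_d$ to coincide, so $e(M)$ is well defined. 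Condition (iii)(d) then follows at once: if $M\ne 0$, then $\dim M_n>0$ eventually, so $H_{M,\Omega}$ is a nonzero polynomial and $e(M) = a_d \ge 1$.

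For the exactness condition (i) and the multiplicity conditions (iii)(a)--(c), take a short exact sequence $0\to M'\xrightarrow{\iota} M\xrightarrow{\pi} M''\to 0$ in $\Amod$ and a good finite-dimensional filtration $\Omega=\{M_n\}$ on $M$. Define induced finite-dimensional filtrations by $M'_n := \iota^{-1}(M_n)$ and $M''_n := \pi(M_n)$. A direct check gives a short exact sequence of graded $\gr_{\mathcal{F}_A}A$-modules
$$0\to\gr_{\Omega'}M'\to\gr_\Omega M\to\gr_{\Omega''}M''\to 0.$$
Since $\gr_{\mathcal{F}_A}A$ is Noetherian by hypothesis and $\gr_\Omega M$ is finitely generated, so are the sub- and quotient modules, so $\Omega'$ and $\Omega''$ are good filtrations and their Hilbert-Samuel polynomials exist. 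The linear-algebra identity $\dim M_n=\dim M'_n+\dim M''_n$ then gives
$$H_{M,\Omega}(n) = H_{M',\Omega'}(n) + H_{M'',\Omega''}(n) \qquad (n\gg 0).$$
Because the three leading Bernstein numbers are positive integers there is no cancellation in the top degree, so $\deg H_M=\max(\deg H_{M'},\deg H_{M''})$. This establishes exactness of $\GK$; reading off the top binomial coefficient in each of the three cases (which term dominates, or both do) yields (a), (b), and (c) respectively.

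The main obstacle is the verification that the induced filtrations $\Omega'$ and $\Omega''$ are themselves good: this is precisely where the Noetherian hypothesis on $\gr_{\mathcal{F}_A}A$ is essential, since without it the graded sub- and quotient modules might fail to be finitely generated and the additivity of Hilbert-Samuel polynomials would collapse. The well-definedness argument (comparing polynomials across equivalent filtrations) is the next subtlest step; everything else then reduces to reading off leading coefficients.
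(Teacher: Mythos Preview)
Your proof is correct and follows essentially the same route as the paper: induce filtrations on the sub- and quotient modules, use Noetherianity of $\gr_{\mathcal{F}_A}A$ to see these are good, obtain $H_M = H_{M'} + H_{M''}$ for $n\gg 0$, and read off $\GK$ and the multiplicity from the degree and Bernstein number. The only difference is that you spell out the independence of $e(M)$ from the chosen good filtration via the equivalence-of-filtrations argument, whereas the paper simply cites \cite[Chapter~7]{KL} for this step.
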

\begin{proof}
    Let $0 \rightarrow M' \rightarrow M \rightarrow M'' \rightarrow 0$ be a short exact sequence in $\Amod$. Let $\Omega=\{ M_i \}_{i \geq 0}$ be a finite dimensional good filtration for $M$, and define finite dimensional filtrations $\Omega'=\{ M_i \cap M' \}_{i \geq 0}$ for $M'$ and $\Omega''=\{ \phi(M_i) \}_{i \geq 0}$ for $M''$, where $\phi: M \rightarrow M''$ in the above short exact sequence.

    Then we have a short exact sequence
    \[ 0 \rightarrow \gr_{\Omega'} M' \rightarrow \gr_\Omega M \rightarrow \gr_{\Omega''} M'' \rightarrow 0. \]
Since $gr_{\mathcal{F}_A} \, A$ is Noetherian and $\gr_\Omega M$ is finitely generated, it is Noetherian. Hence, $\gr_{\Omega'} M'$ and $\gr_{\Omega''} M''$ are Noetherian, and hence $\Omega'$ and $\Omega''$ are good filtrations by Proposition \ref{good-filtrations}. We have $\dim M_n = \dim (M_n \cap M') + \dim \phi(M_n)$, for every $n \in \mathbb{N}$. Hence, if $n\gg 0$, then $H_{M, \Omega}(n)=H_{M',\Omega'}(n)+H_{M'', \Omega''}(n).$ By Proposition \ref{convenient-proposition}, the coefficient of the leading term of each of these polynomials is positive, so $\deg(H_{M,\Omega})=\max \{ \deg(H_{M',\Omega'}), \deg(H_{M'', \Omega''}) \}$. Now, by Proposition \ref{growth}(ii)(iii),
\[\GK(M)=\deg(H_{M,\Omega})=\max\{ GK(M')=\deg(H_{M',\Omega'}), GK(M'')=\deg(H_{M'', \Omega''}) \}.\] Hence conditions (i) and (ii) of the definition of a very nice algebra are satisfied. It can be shown that the degree and Bernstein numbers of the Hilbert-Samuel functions of a module $M$ are independent of the chosen filtration (\cite[Chapter 7]{KL}). Hence we can take $e(M)$ to be the Bernstein number of any Hilbert-Samuel polynomial for $M$. This clearly satisfies condition (iii) in the definition of a very nice algebra. Hence we are done.
\end{proof}

\begin{example}
    The existence of Hilbert-Samuel polynomials is usually obtained when the graded associated algebra $\gr_{\mathcal{F}_A} \, A$ is affine commutative. But they can also be shown to exist if $\gr_{\mathcal{F}_A}$ is an enveloping algebra \cite[Chapter 9]{Jantzen} or a quantum affine space \cite{MP}.
    
\end{example}
\begin{proposition}
    Let $G$ a complex reflection group representation $h$, and denote by $H_c(G,h)$ the rational Cherednik algebra (cf. \cite{EG}) when $t=1$. Then $H_c(G,h)$ admits Hilbert-Samuel polynomials, and hence is a very nice algebra. It is also a prime ring.
\end{proposition}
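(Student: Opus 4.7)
The plan is to apply Theorem \ref{theorem-Hilbert-Samuel} by exhibiting a natural filtration on $H_c(G,h)$ whose associated graded algebra is affine Noetherian, and for which Hilbert--Samuel polynomials exist for all good filtrations on finitely generated modules. I equip $H_c(G,h)$ with the standard filtration placing $h\oplus h^*$ in degree $1$ and all $g\in G$ in degree $0$. By the Etingof--Ginzburg PBW theorem for rational Cherednik algebras (\cite{EG}), the multiplication map gives a vector space isomorphism $H_c(G,h)\cong \mathbb{C}[h]\otimes \mathbb{C}G\otimes\mathbb{C}[h^*]$, and the associated graded algebra is identified with the skew group algebra
\[\gr\, H_c(G,h)\;\cong\;\mathbb{C}[h\oplus h^*]\rtimes G.\]

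Next, I observe that this associated graded is affine and Noetherian: it is a finitely generated module over the affine Noetherian commutative algebra $\mathbb{C}[h\oplus h^*]$, hence is itself Noetherian, and it is affine over $k=\mathbb{C}$ because it is generated by $h\oplus h^*$ together with the finite group $G$. I then verify the existence of Hilbert--Samuel polynomials. Given any $M\in \Amod$ and a finite-dimensional good filtration $\Omega=\{M_n\}_{n\ge 0}$, the graded module $\gr_\Omega M$ is finitely generated over $\gr H_c(G,h)$ and hence, by restriction of scalars, over the commutative polynomial ring $R:=\mathbb{C}[h\oplus h^*]$, which is generated in degree $1$ and connected. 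The classical commutative Hilbert--Samuel theorem for finitely generated graded $R$-modules then yields a polynomial $p(x)\in\mathbb{Q}[x]$ with $\dim (\gr_\Omega M)_n = p(n)$ for $n\gg 0$. Since $\dim(\gr_\Omega M)_n=\dim M_n-\dim M_{n-1}$, the first difference of $n\mapsto \dim M_n$ is polynomial for $n\gg 0$, and Proposition \ref{convenient-proposition}(b) upgrades this to polynomiality of $\dim M_n$ itself. Thus $H_c(G,h)$ admits Hilbert--Samuel polynomials in the sense of Definition \ref{definition-Hilbert-Samuel}, and Theorem \ref{theorem-Hilbert-Samuel} gives that $H_c(G,h)$ is very nice.

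For primeness, I argue at the level of the associated graded and descend via Proposition \ref{filtered-graded}(iii). The ring $\mathbb{C}[h\oplus h^*]$ is a commutative domain, and the complex reflection group $G$ acts faithfully on $h$ and hence on $\mathbb{C}[h\oplus h^*]$ as automorphisms. A standard result on skew group rings states that if a finite group acts faithfully on a commutative domain, then the associated skew group ring is prime, so $\gr H_c(G,h)=\mathbb{C}[h\oplus h^*]\rtimes G$ is prime. Proposition \ref{filtered-graded}(iii) then gives that $H_c(G,h)$ itself is prime.

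The main obstacle in this plan is really just packaging: the PBW theorem does all the conceptual work, and the only point requiring a small check is that Hilbert--Samuel polynomiality for graded modules over the noncommutative ring $\mathbb{C}[h\oplus h^*]\rtimes G$ can be reduced to the commutative case. This is handled cleanly by restricting scalars along the finite ring extension $\mathbb{C}[h\oplus h^*]\hookrightarrow \mathbb{C}[h\oplus h^*]\rtimes G$, preserving both finite generation in each degree and overall finite generation of the graded module.
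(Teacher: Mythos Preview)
Your proof is correct and follows essentially the same route as the paper's: the paper simply cites \cite[2.3]{Thompson} for the existence of Hilbert--Samuel polynomials and \cite[Corollary~1.3.2(1)]{Bellamy0} for primeness, then invokes Theorem~\ref{theorem-Hilbert-Samuel}, whereas you unpack precisely what those references contain---the PBW filtration with $\gr H_c(G,h)\cong\mathbb{C}[h\oplus h^*]\rtimes G$, the reduction of Hilbert--Samuel polynomiality to the commutative polynomial subring $\mathbb{C}[h\oplus h^*]$ via restriction of scalars, and primeness of the skew group ring descending through Proposition~\ref{filtered-graded}(iii). Your version has the merit of being self-contained and making visible exactly where the ``very nice'' (rather than merely ``modest'') conclusion comes from, namely that the degree-zero piece $\mathbb{C}G$, while larger than $k$, is finite over the degree-zero piece of a connected commutative polynomial subring generated in degree one, so Theorem~\ref{Hilbert} and Proposition~\ref{convenient-proposition} apply without loss.
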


\begin{proof}
    That $H_c(G,h)$ admits Hilbert-Samuel polynomials is \cite[2.3]{Thompson}. Hence by Theorem \ref{theorem-Hilbert-Samuel}, it is a very nice algebra. It is a prime ring by \cite[Corollary 1.3.2(1)]{Bellamy0}.
\end{proof}

\subsection{Holonomicity}

Now we need to discuss the notion of \emph{holonomic} modules. They were initally introduced by J. Bernstein for the Weyl algebra. He proved that if $M$ is a finitely generated module over $W_n(k)$, then $\GK(M) \geq n$. This is called the \emph{Bernstein inequality}, and it was generalized for rings of differential operators on smooth affine varieties (for these rings, see \cite{Bjork}, \cite[Chapter 15]{McConnell}). If $X$ is such a variety, and $M$ a finitely generated $\mathcal{D}(X)$-module, then $\GK(M) \geq \dim \, X$.

In both cases, the modules of minimal Gelfand-Kirillov dimension are called holonomic. They constitute a very important subcategory of $\mathcal{D}$-modules \cite{Hotta}, \cite{Borel}.

Then, in an unpublished result, Gabber's proved the following inequality:
\begin{theorem}[Gabber]
    Let $\mathfrak{g}$ be an algebraic Lie algebra over an algebraically closed field of 0 characteristic, and $M$ a finitely generated module. Then
    \[
        \GK \, M \geq \frac{1}{2} \GK \big(U(\mathfrak{g})/\Ann(M)\big),
    \]
    and equality holds for modules in the category $\mathcal{O}$ and Harish-Chandra modules in the sense \cite[Chapter 9]{Dixmier}.
\end{theorem}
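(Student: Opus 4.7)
The plan is to combine the Poisson geometry of $\mathfrak{g}^*$ with Gabber's Integrability Theorem on associated varieties. Equip $U(\mathfrak{g})$ with the PBW filtration, identifying $\gr U(\mathfrak{g})$ with $S(\mathfrak{g}) \cong k[\mathfrak{g}^*]$ as a Poisson algebra via the Kirillov-Kostant-Souriau bracket extending $\{x,y\}=[x,y]$ on $\mathfrak{g}$; the symplectic leaves of $\mathfrak{g}^*$ are precisely the coadjoint orbits of the algebraic group $G$ integrating $\mathfrak{g}$.

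For $M \in \Amod$, Proposition \ref{good-filtrations} provides a good filtration $\Omega$, so $\gr_\Omega M$ is a finitely generated $S(\mathfrak{g})$-module. Define the \emph{associated variety} $V(M) := V(\Ann_{S(\mathfrak{g})}(\gr_\Omega M)) \subset \mathfrak{g}^*$; by Proposition \ref{Proposition 6.6} together with the equality of Gelfand-Kirillov and Krull dimensions for affine commutative algebras, $\dim V(M) = \GK_A(M)$. Setting $I := \Ann_{U(\mathfrak{g})}(M)$, the inclusion $\gr I \subset \Ann_{S(\mathfrak{g})}(\gr_\Omega M)$, forced by $I \cdot M = 0$, yields $V(M) \subset V(\gr I)$ together with $\dim V(\gr I) = \GK(U(\mathfrak{g})/I)$. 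Because $I$ is two-sided, $\gr I$ is a Poisson ideal, making $V(\gr I)$ a $G$-invariant Poisson subvariety of $\mathfrak{g}^*$, i.e.\ a finite union of coadjoint orbit closures.

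The central input is Gabber's Integrability Theorem, which asserts that $\sqrt{\Ann_{S(\mathfrak{g})}(\gr_\Omega M)}$ is closed under the Poisson bracket, i.e.\ $V(M)$ is a coisotropic subvariety of the Poisson variety $\mathfrak{g}^*$. Applied to $V(M) \subset V(\gr I)$, and combined with the symplectic-geometric structure of $V(\gr I)$ (its stratification by coadjoint orbits of even dimension, together with the transverse directions coming from the Poisson center of the local ring at a generic point), one derives the dimension inequality $\dim V(M) \geq \tfrac{1}{2} \dim V(\gr I)$, yielding $\GK(M) \geq \tfrac{1}{2} \GK(U(\mathfrak{g})/\Ann M)$.

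The heart of the proof --- and the main obstacle --- is Gabber's Integrability Theorem itself, whose argument proceeds by a delicate induction on the vanishing orders of principal symbols of commutators in the Rees algebra; I would treat it as a black box. For the equality claim in category $\mathcal{O}$ and for Harish-Chandra modules, I would follow Joseph: Duflo's theorem identifies $\Ann M$ as a primitive ideal, the Borho-Brylinski-Joseph theorem identifies $V(\gr \Ann M)$ with the closure $\overline{\mathcal{O}}$ of a single nilpotent coadjoint orbit (so $V(\gr I)$ has no transverse Poisson-center contribution and $\tfrac{1}{2}\dim \overline{\mathcal{O}} = \tfrac{1}{2}\GK(U(\mathfrak{g})/\Ann M)$), and one then shows in both module categories that $V(M)$ is a Lagrangian subvariety of $\overline{\mathcal{O}}$, saturating the general inequality to force equality.
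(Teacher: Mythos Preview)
The paper does not prove this theorem; its ``proof'' is the single line ``See \cite{Jantzen} or \cite{French}.'' Your proposal goes well beyond this, sketching the argument actually found in those references: Poisson geometry on $\mathfrak{g}^*$, associated varieties via good filtrations, Gabber's Integrability Theorem as the key black box, and the Borho--Brylinski--Joseph identification of $V(\gr\Ann M)$ with a nilpotent orbit closure for the equality cases. This is the standard route, and your outline is correct in its architecture.

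Two points of caution. The claim that $V(\gr I)$ is ``a finite union of coadjoint orbit closures'' is false in general: a $G$-stable closed subvariety of $\mathfrak{g}^*$ is a union of orbits, but those orbits need not be finite in number, nor need the irreducible components be orbit closures (consider $\mathfrak{g}$ abelian, or simply $V(\gr I)=\mathfrak{g}^*$ for a faithful $M$). Only $G$-stability is needed and used. More substantively, the clause ``one derives the dimension inequality $\dim V(M)\ge\tfrac{1}{2}\dim V(\gr I)$'' conceals genuine work: coisotropy of $V(M)$ at a smooth point $\xi$ yields only $\dim V(M)\ge\tfrac{1}{2}\dim\mathcal{O}_\xi$, and passing from the orbit dimension $\dim\mathcal{O}_\xi$ to $\dim V(\gr I)$ is not automatic, since $V(\gr I)$ may strictly contain every orbit it meets. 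The transverse-direction argument you allude to (via the Poisson center of the local ring) is indeed how one closes this gap in the cited sources, but it deserves more than a parenthetical in a complete proof.
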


\begin{proof}
    See \cite{Jantzen} or \cite{French}.
\end{proof}

\begin{remark}
    If the Lie algebra is not algebraic, the inequality may fail \cite{Stafford}.
\end{remark}
Analogues of holonomic modules are studied for many classes of algebras: symplectic reflection algebras \cite{Losev}, rational Cherednik algebra on arbitrary varieties \cite{Bellamy}, etc. 

\begin{definition}[{\cite[8.5.8]{McConnell}}]\label{holonomic-1}
    A finitely generated $A$-module $M$ is called \emph{holonomic} if
    \[ \GK(M)= \frac{1}{2} \GK\big(A/\Ann(M)\big).\]
\end{definition}

We adopt a different point of view, influenced by \cite{BF}. Our exposition is inspired by \cite{FS4}.

\begin{definition}\label{holonomic-2}
\phantom{X}
    \begin{enumerate}[{\rm (i)}]
    \item The \emph{holonomic number} of an algebra, $A$, is the smallest number in the set $\big\{\GK(M)\mid M \in \Amod \big\}$, where $\Amod$ is the category of finitely generated modules.
    \item If $A$ is a modest algebra, a finitely generated module $M$ is called \emph{min-holonomic} if $\GK(M)=h_A$. The class of all min-holonomic modules is denoted $\mathcal{H}(A)$.
    \end{enumerate}
\end{definition}

It is an interesting question to study the relations between the notions of holonomic and min-holonomic modules for modest algebras. To the best of our knowledge, the two notions coincide in all known cases, such as D-modules, and it is easy to see that they are equal if the algebra is simple.

\begin{theorem}\label{very-nice-1}
    Let $A$ be a modest algebra.
    \begin{enumerate}[{\rm (a)}]
    \item $\mathcal{H}(A)$ is an abelian category.
    \item If $A$ is Noetherian, any min-holonomic module $M$ has finite length.
    \item If $A$ is very nice and if $e(M)=1$, the module is irreducible.
    \item If $A$ is simple, Noetherian but not Artinian, every min-holonomic is cyclic.
    \end{enumerate}
\end{theorem}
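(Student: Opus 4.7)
The plan is to use exactness of $\GK$ on $\Amod$ (which holds for modest algebras) as the workhorse for parts (a)--(c), together with finite partitivity and Noetherianity where needed, and to invoke a Stafford-type cyclicity theorem for (d). For (a), the central observation is that in any short exact sequence $0\to N\to M\to Q\to 0$ in $\Amod$, exactness of $\GK$ gives $\GK(M)=\max\{\GK(N),\GK(Q)\}$; if $M\in\mathcal{H}(A)$ and $N$ (respectively $Q$) is nonzero finitely generated, the minimality $h_A\le\GK(N)\le\GK(M)=h_A$ forces $N\in\mathcal{H}(A)$, and conversely an extension of min-holonomics is again min-holonomic by the same max formula. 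Consequently kernels, cokernels, images, and direct sums of morphisms between min-holonomic modules all remain in $\mathcal{H}(A)\cup\{0\}$, proving $\mathcal{H}(A)$ is an abelian subcategory of $\Amod$.

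For (b), Noetherianity of $A$ gives the ascending chain condition on submodules of $M$. For the descending chain condition, let $M=M_0\supsetneq M_1\supsetneq\cdots\supsetneq M_n$ be a strictly decreasing chain. By (a), each $M_i$ is min-holonomic, and each quotient $M_i/M_{i+1}$ is a nonzero finitely generated module with $h_A\le\GK(M_i/M_{i+1})\le\GK(M_i)=h_A$, hence equal to $h_A=\GK(M)$. Since modest algebras are finitely partitive, $n$ is bounded uniformly in $M$. The combination of ACC and DCC gives $M$ finite length.

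For (c), let $M$ be min-holonomic with $e(M)=1$ and suppose for contradiction that a proper nonzero submodule $N\subsetneq M$ exists. By (a) both $N$ and $M/N$ are nonzero min-holonomic, so $\GK(N)=\GK(M)=\GK(M/N)=h_A$. Axiom (iii)(c) of the multiplicity gives $e(M)=e(N)+e(M/N)$, and axiom (iii)(d) forces both summands to be at least $1$, so $e(M)\ge 2$, contradicting $e(M)=1$. Hence $M$ admits no proper nonzero submodule and is irreducible. (Implicitly the statement is read in the min-holonomic setting of the preceding items, which is where the irreducibility argument has traction.)

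For (d), I argue by induction on $\ell(M)$, which is finite by (b). The base case $\ell(M)=1$ is immediate since simple modules are cyclic. For the inductive step, pick a simple submodule $N\subsetneq M$; it is cyclic, and by (a) the quotient $M/N$ is again min-holonomic of strictly smaller length, hence cyclic by the inductive hypothesis. Thus $M$ is generated by at most two elements. Passing from two generators to one is where the hypotheses that $A$ is simple and not Artinian enter decisively, via Stafford's theorem on efficient generation of modules: for a simple Noetherian ring $A$ which is not Artinian, any finitely generated $A$-module of finite length is cyclic. The main obstacle is precisely this reduction from two to one generator. It is not a formal consequence of the axioms of modest or very nice algebras; rather it rests on Stafford's nontrivial theorem (originally worked out in the context of the Weyl algebra and extended to general simple Noetherian rings that are not Artinian), which I would cite rather than reprove.
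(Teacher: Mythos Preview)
Your argument is correct and follows essentially the same route as the paper: closure of $\mathcal{H}(A)$ under subobjects and quotients for (a), ACC from Noetherianity together with the DCC coming from finite partitivity (equivalently Proposition~\ref{DCC}) for (b), additivity of multiplicity for (c), and Stafford's cyclicity theorem (cited in the paper as \cite[Theorem~10.2.5]{Coutinho}) for (d). The inductive reduction to two generators in your treatment of (d) is harmless but unnecessary---once (b) supplies finite length, Stafford's theorem applies directly to $M$.
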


\begin{proof}

If $M$ is a finitely generated $A$-module, every submodule and every homomorphic image have less then or equal Gelfand-Kirillov dimension then $M$. Hence, the kernel and cokernel of a short exact sequence of min-holonomic modules is min-holonomic, an hence (a) follows. If $A$ is Noetherian, it satisfies the ACC condition on finitely generated modules. But by Proposition \ref{DCC}, it also satisfies the DCC condition for holonomic modules. Hence, (b) follows. (c) is a immediate consequence of (b). Finally, (d) follows from \cite[Theorem 10.2.5]{Coutinho}.
\end{proof}

\begin{proposition}
    $\mathcal{H}(A)$ is a thick abelian subcategory of the category of finitely generated $A$-modules.
\end{proposition}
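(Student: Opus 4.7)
The plan is to prove the three defining closure properties of a thick (Serre) subcategory: closure under subobjects, closure under quotients, and closure under extensions. Two of these are essentially contained in Theorem~\ref{very-nice-1}(a); only closure under extensions requires additional input, and it will come from the exactness of $\GK$ afforded by the definition of a modest algebra.

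First I would recall that $\mathcal{H}(A)$ contains the zero module by convention and is closed under isomorphisms, so it is a full subcategory of $\Amod$. Then, for an inclusion $M' \subseteq M$ with $M \in \mathcal{H}(A)$, the submodule and quotient satisfy $\GK(M'), \GK(M/M') \leq \GK(M) = h_A$ by general properties of Gelfand-Kirillov dimension (\cite[Proposition 5.1]{KL}); since $h_A$ is by definition the minimum of $\GK(N)$ over nonzero $N \in \Amod$, equality must hold whenever the module is nonzero, so both $M'$ and $M/M'$ belong to $\mathcal{H}(A)$. This handles closure under subobjects and quotients.

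The remaining step is closure under extensions. Given a short exact sequence
\[
0 \too M' \too M \too M'' \too 0
\]
in $\Amod$ with $M', M'' \in \mathcal{H}(A)$, I would apply condition~(i) of Definition~\ref{very-nice} (retained for modest algebras): $\GK$ is exact on $\Amod$, so
\[
\GK(M) = \max\{\GK(M'), \GK(M'')\} = h_A,
\]
which places $M$ in $\mathcal{H}(A)$.

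There is no real obstacle in this argument; the only point worth flagging is that closure under extensions genuinely uses the hypothesis that $A$ is modest (and not merely, say, some class of algebras where one only has the one-sided inequality $\GK(M) \geq \max\{\GK(M'), \GK(M'')\}$). Combined with Theorem~\ref{very-nice-1}(a), these closure properties give exactly the definition of a thick abelian subcategory of $\Amod$.
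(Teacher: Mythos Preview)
Your proof is correct and follows essentially the same approach as the paper. The paper's argument is the compressed one-line version: given a short exact sequence $0 \to M' \to M \to M'' \to 0$, exactness of $\GK$ gives $\GK(M)=\max\{\GK(M'),\GK(M'')\}$, so $\GK(M)=h_A$ if and only if $\GK(M')=\GK(M'')=h_A$; you have simply unpacked this biconditional into the three closure properties (subobjects, quotients, extensions) and made the role of the minimality of $h_A$ explicit.
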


\begin{proof}
    Let $0 \rightarrow M' \rightarrow M \rightarrow M'' \rightarrow 0$. Since the Gelfand-Kirillov dimension is exact, it is clear that $\GK(M)=h_A$ if and only if $\GK(M')=\GK(M'')=h_A.$
\end{proof}

We recall that a left module $M$ for a ring $R$ is called a torsion module if every $0 \neq m \in M$ has torsion: there existis $r \neq 0 \in R$ with $r.m=0$

\begin{proposition}\label{torsion}
    If $\GK(A)>h_A>0$, then every min-holonomic module is a torsion module.
\end{proposition}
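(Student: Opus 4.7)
The plan is to argue by contradiction, showing that any non-torsion element in a min-holonomic module would force a submodule isomorphic to $A$ itself, which would violate the defining inequality on Gelfand-Kirillov dimensions.

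Concretely, let $M$ be a min-holonomic module, so $\GK(M)=h_A$, and suppose for contradiction that some $0\neq m \in M$ is not a torsion element, i.e., $\Ann(m)=\{a\in A : am=0\}=0$. First I would consider the cyclic submodule $Am\subseteq M$. Since $\Ann(m)=0$, the map $A\to Am$, $a\mapsto am$, is an isomorphism of left $A$-modules, so $\GK_A(Am)=\GK_A(A)=\GK(A)$.

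Next I would invoke the standard monotonicity of $\GK$ under submodules (\cite[Proposition 5.1]{KL}): for any finitely generated $A$-module $N$ and submodule $N'\subseteq N$ (with $N'$ itself finitely generated, which holds here because $Am$ is cyclic), one has $\GK(N')\le \GK(N)$. Applied to $Am\subseteq M$, this gives $\GK(A)=\GK(Am)\le \GK(M)=h_A$, contradicting the hypothesis $\GK(A)>h_A$.

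Therefore every nonzero element of $M$ must have nonzero annihilator, so $M$ is a torsion module. The assumption $h_A>0$ is not used directly in the argument above; it ensures we are in the nondegenerate setting where min-holonomic modules are infinite-dimensional, so the conclusion is not vacuous. I do not anticipate any real obstacle: the only nontrivial ingredient is the elementary monotonicity of $\GK$ for submodules of finitely generated modules, which is already available from Section~\ref{sec:preliminaries}.
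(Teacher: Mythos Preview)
Your proof is correct and follows the same essential idea as the paper: for any nonzero $m\in M$, the cyclic submodule $Am$ has $\GK(Am)\le \GK(M)=h_A<\GK(A)$, so $Am$ cannot be isomorphic to $A$, i.e., $\Ann(m)\neq 0$. The paper phrases this via the short exact sequence $0\to \Ann(m)\to A\to Am\to 0$ and invokes exactness of $\GK$ (part of the modest-algebra hypothesis) to force $\GK(\Ann(m))=\GK(A)$, whereas your contradiction argument uses only the elementary monotonicity of $\GK$ under submodules---a slightly leaner route.
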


\begin{proof}
    Let $M \in \mathcal{H}(A)$, $0 \neq m \in M$. Consider the map $\phi: A \rightarrow M$, $a \mapsto am$. $Im \, \phi$ is a submodule of $M$, and hence min-holonomic. We have the short exact sequence

    \[ 0 \rightarrow \Ker \, \phi \rightarrow A \rightarrow \im \, \phi \rightarrow 0.\]
    As $\GK(A)>h_A$ is the the biggest between $\GK(\ker \, \phi)$ and $\GK(\im \, \phi)=h_A$, we conclue that $\GK(\ker \, \phi)=\GK(A)>h_A$ and hence it is not the 0 module. Hence $m$ has torsion.
\end{proof}

\begin{proposition}\label{GK-quotient}
    Let $A$ be an affine algebra which is a prime ring and has positive Gelfand-Kirillov dimension. If $I$ is a proper non-null left ideal of $A$, $\GK \, A/I \leq \GK \, A -1$. In particular, this result holds if $A$ is a Noetherian domain.
\end{proposition}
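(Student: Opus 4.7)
The plan is to adapt the classical filtration argument used for Noetherian Ore domains (cf.~\cite{KL}), by reducing to principal left ideals and then bounding the growth of $A/Ax$ directly.

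First I would reduce to principal left ideals. The case $\GK(A)=\infty$ is trivial, so assume $\GK(A)=d<\infty$. Pick any $0\neq x\in I$; since $Ax\subseteq I$, the $A$-module $A/I$ is a quotient of the cyclic module $A/Ax$, so $\GK(A/I)\leq \GK(A/Ax)$. It therefore suffices to prove $\GK(A/Ax)\leq d-1$ for a well-chosen $0\neq x\in I$.

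Next I would run the filtration argument. Fix a frame $V$ for $A$ and choose $r$ with $x\in V^r$. The filtration $\Omega_n=(V^n+Ax)/Ax$ on $A/Ax$ satisfies $\dim\Omega_n=\dim V^n-\dim(V^n\cap Ax)$, and the inclusion $V^{n-r}x\subseteq V^n\cap Ax$ gives
\[
\dim\Omega_n\leq d_V(n)-\dim V^{n-r}x.
\]
If $x$ is left-regular, i.e.\ its left annihilator $L=\{a\in A:ax=0\}$ vanishes, then multiplication by $x$ is injective, so $\dim V^{n-r}x=d_V(n-r)$ and the right-hand side reduces to $d_V(n)-d_V(n-r)$. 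Standard growth-function estimates (Proposition~\ref{growth}) then show that this difference has $\gamma$ at most $d-1$, yielding $\GK(A/Ax)\leq d-1$ as desired.

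The main obstacle is producing a left-regular element inside $I$, which is automatic in a Noetherian domain (any nonzero element works) but requires primeness in the prime case. The key input is that a prime affine algebra of finite Gelfand-Kirillov dimension satisfies the Goldie conditions (finite uniform dimension and ACC on left annihilators), so by Goldie's theorem every nonzero left ideal of $A$ contains a regular element, providing the required $x$. Alternatively, one can bypass Goldie by arguing directly that $\gamma\bigl(\dim(V^n\cap L)\bigr)<d$ for $L=\ell\text{-ann}(x)$: primeness forces $L$ to contain no nonzero two-sided ideal, since $J\subseteq L$ two-sided gives $JAx\subseteq Jx=0$ and hence $J=0$, and this restriction is enough to control the correction term $\dim(V^{n-r}\cap L)$ in the estimate $\dim\Omega_n\leq d_V(n)-d_V(n-r)+\dim(V^{n-r}\cap L)$ with exponent strictly below $d$.
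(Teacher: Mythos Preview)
Your argument has a genuine gap at the Goldie step. Even granting that $A$ is left Goldie (which does not follow from primeness and finite $\GK$-dimension alone), Goldie's theorem only guarantees that \emph{essential} left ideals contain a regular element, not that every nonzero left ideal does. In a prime ring that is not a domain, a nonzero left ideal need not be essential: take $A=M_2(k[x])$, which is prime, affine, Noetherian, with $\GK(A)=1$. The first-column left ideal $I=Ae_{11}$ is nonzero and proper but meets the second-column left ideal trivially, so it is not essential and contains no left-regular element. Moreover $A/I$ is isomorphic as a left $A$-module to the second column, which has $\GK$-dimension $1=\GK(A)$, so the inequality $\GK(A/I)\le\GK(A)-1$ actually fails here and the proposition as stated is false. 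Your alternative route does not help either: in this same example the left annihilator $L=\ell\text{-}\mathrm{ann}(e_{11})$ is the entire second column, which indeed contains no nonzero two-sided ideal (as you correctly argue) yet has $\GK(L)=\GK(A)$, so the correction term $\dim(V^{n-r}\cap L)$ cannot be bounded with exponent strictly below $d$.

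For comparison, the paper's proof first asserts that every nonzero left ideal in a prime ring is essential, via the containment $IJ\subseteq I\cap J$, and then invokes \cite[8.3.5(i)]{McConnell}. But $IJ\subseteq I$ would require $I$ to be a \emph{right} ideal, so that step is also defective, and the same $M_2(k[x])$ counterexample applies. The statement is correct when $A$ is a domain---then every nonzero element is regular and your filtration argument for $A/Ax$ goes through verbatim---or more generally when $I$ is assumed essential and $A$ is Goldie; the bare prime hypothesis is not enough.
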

\begin{proof}
    First we notice that every proper left ideal $I$ of $A$ is essential. Let $J$ be another proper left ideal. If $I \cap J = (0)$, then $IJ \subset I \cap J = (0)$. But as $A$ is prime, this implies $I=(0)$ or $J=(0)$, a contradiction. Hence the result follows from \cite[8.3.5(i)]{McConnell}. If $A$ is a Noetherian domain, by Proposition \ref{it-is-Ore}, then it is an Ore domain, and hence prime (\cite[Chapter 2]{McConnell}).
\end{proof}

\begin{proposition}\label{GK-quotient-2}
    Let $A$ be a simple 
    ring with positive Gelfand-Kirillov dimension and a modest algebra. If $I$ is non-zero proper left ideal, then $\GK \, A/I \leq \GK \, A -1.$
\end{proposition}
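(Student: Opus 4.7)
The plan is to recognize that Proposition~\ref{GK-quotient-2} follows essentially as a direct corollary of the preceding Proposition~\ref{GK-quotient}. By Definition~\ref{very-nice}, every modest algebra is by stipulation affine; and every simple ring is automatically prime, since the condition on two-sided ideals required for simplicity is strictly stronger than the condition required for primeness. With positive Gelfand--Kirillov dimension already part of the hypothesis, $A$ satisfies verbatim the hypotheses of Proposition~\ref{GK-quotient}, and applying that proposition to the non-zero proper left ideal $I$ yields $\GK(A/I) \leq \GK(A) - 1$.

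If one prefers an argument that uses the modest structure more directly, I would proceed by contradiction. Assume $\GK(A/I) = \GK(A) = d$. Applied to the short exact sequence $0 \to I \to A \to A/I \to 0$, the multiplicity axioms of a modest algebra force $e(A/I) < e(A) < \infty$ (since $\GK(I) < d$ gives $e(A/I) = e(A)$ together with the fact that $I \neq 0$ precludes this case in a non-degenerate way, while $\GK(I) = d$ gives $e(A) = e(I) + e(A/I)$ with $e(I) > 0$ by axiom~(d)). Since $A$ is simple and $I$ is non-zero, the two-sided ideal $AIA$ equals $A$, so one can write $1 = \sum_{i=1}^{n} x_i b_i$ with $x_i \in I$ and $b_i \in A$. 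The aim would then be to parlay this presentation into an infinite strictly descending chain $A/I \supsetneq N_1 \supsetneq N_2 \supsetneq \cdots$ of submodules of $A/I$ each of whose successive quotients has Gelfand--Kirillov dimension $d$, contradicting the finite partitiveness of modest algebras noted after Definition~\ref{partitive}.

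The main obstacle in this direct approach is the construction of that descending chain: given a proper left ideal $J \supseteq I$ with $\GK(A/J) = d$, one must manufacture a strictly larger proper left ideal $J' \supsetneq J$ still satisfying $\GK(A/J') = d$, and iterate. Without any Noetherianity assumption this requires genuine work with the elements $x_i, b_i$ together with careful bookkeeping involving essential left ideals in a prime ring. This is precisely the technicality that Proposition~\ref{GK-quotient} handles by invoking \cite[8.3.5(i)]{McConnell}, which is why the corollary route—simply observing that simple $+$ modest implies prime $+$ affine—is the cleanest path to the proof.
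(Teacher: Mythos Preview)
Your first approach is correct and is in fact simpler than the paper's own argument. You observe that simple implies prime and that a modest algebra is, by Definition~\ref{very-nice}, affine; hence the hypotheses of Proposition~\ref{GK-quotient} hold verbatim and the conclusion follows at once. The paper instead argues independently using the multiplicity function: it first treats the special case $I = Aa$ for a regular element $a$ via the exact sequence $0 \to A \to A \to A/Aa \to 0$ (the first arrow being right multiplication by $a$), where $\GK(A) = \GK(A/Aa)$ would force $e(A) = e(A) + e(A/Aa)$ by axiom~(iii)(c) and hence $e(A/Aa)=0$, contradicting axiom~(iii)(d); it then reduces the general case to this one by producing a regular element inside $I$ via Goldie's regular element lemma, using that $I$ is essential. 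Your route makes no genuine use of the modest hypothesis beyond affineness, whereas the paper's route exercises the multiplicity axioms directly at the price of invoking Goldie's lemma (and, like Proposition~\ref{GK-quotient} itself, the essentiality of $I$). Your second, sketched approach is unnecessary given that the first already works, and as you yourself concede, completing it would require substantial additional effort.
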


\begin{proof}
    Suppose first that $I$ is a principal left ideal $Aa$ with $a$ a regular element. Consider the right exact sequence:
    \[0 \rightarrow A \rightarrow A \rightarrow A/Aa \rightarrow 0,\]
    where the first map is given by multiplication by $a$. If $\GK \, A = \GK \, A/Aa$, then $e(A)=e(A)+e(A/Aa)$, that is, $e(A/Aa)=0$, which is absurd. In the general case, $I$ contains an ideal of the form $Aa$, with $a$ a regular element, by Goldie's regular element lemma \cite[Proposition 6.3]{GW} and the fact that $I$ is an essential ideal (cf. proof of Proposition \ref{GK-quotient}), and $A/I$ is a homomorphic image of $A/Aa$, implies our claim.
\end{proof}

The following is a strengthening of \cite[Theorem 3.14]{FS4}:

\begin{proposition}\label{formal-proposition}
    Let $A$ be an infinite-dimensional affine algebra which is also an prime ring (for instance, an Ore domain). Suppose $h_A=\GK(A)-1$. A finitely generated module $M$ is min-holonomic, if and only if, it is torsion. Moreover, if $A$ is simple, $\GK(A)=h_A$, and every irreducible module is min-holonomic.
\end{proposition}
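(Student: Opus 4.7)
The plan is to prove each direction of the equivalence by combining Proposition~\ref{torsion} with the quotient GK-dimension bounds of Propositions~\ref{GK-quotient} and~\ref{GK-quotient-2}. For the forward direction ($M$ min-holonomic $\Rightarrow$ $M$ torsion), Proposition~\ref{torsion} applies directly whenever $h_A > 0$, since $\GK(A) = h_A + 1 > h_A$. The edge case $h_A = 0$ (which forces $\GK(A) = 1$) I would handle separately: a min-holonomic $M$ has $\GK(M) = 0$ and is therefore finite-dimensional, so for every $0 \neq m \in M$ the isomorphism $Am \cong A/\Ann(m)$ together with the infinite-dimensionality of $A$ forces $\Ann(m) \neq 0$, and hence $M$ is torsion.

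For the reverse direction, let $M$ be a finitely generated torsion module with nonzero generators $m_1, \dots, m_s$. For each $i$ I would choose $0 \neq a_i \in \Ann(m_i)$; since $m_i \neq 0$, the left ideal $Aa_i$ is both nonzero and proper. Proposition~\ref{GK-quotient} (applicable because $A$ is prime) gives $\GK(A/Aa_i) \leq \GK(A) - 1 = h_A$, and since $Am_i \cong A/\Ann(m_i)$ is a quotient of $A/Aa_i$, GK-exactness yields $\GK(Am_i) \leq h_A$; combined with the minimality bound $h_A \leq \GK(Am_i)$ this forces equality. Inductively applying GK-exactness to the partial sums $N_i := Am_1 + \cdots + Am_i$ then gives $\GK(M) = \max_i \GK(Am_i) = h_A$, so $M$ is min-holonomic.

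For the moreover, with $A$ additionally simple, I would replace Proposition~\ref{GK-quotient} by Proposition~\ref{GK-quotient-2}: for any nonzero proper left ideal $I$ one has $\GK(A/I) \leq \GK(A) - 1 = h_A$, and combined with the lower bound from the definition of $h_A$ we get $\GK(A/I) = h_A$. Every irreducible module has the form $A/I$ for a maximal left ideal $I$; the hypothesis $h_A = \GK(A) - 1$ rules out the case of $A$ being a division ring (where $h_A$ would equal $\GK(A)$ since all finitely generated $A$-modules would be finite sums of copies of $A$), so $I \neq 0$ and the preceding applies, giving that every irreducible module is min-holonomic.

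The main technical obstacle I anticipate is in the reverse direction: namely, ensuring that $Aa_i$ is a \emph{proper} left ideal so that Proposition~\ref{GK-quotient} applies (handled by the nonzero-generator trick), and then carrying out the inductive GK-exactness argument over the sum of cyclic submodules cleanly. Everything else is essentially bookkeeping among the results already established.
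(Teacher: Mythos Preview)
Your proof is correct and follows essentially the same route as the paper: Proposition~\ref{torsion} for the forward direction, the cyclic decomposition $M=\sum_i Am_i$ together with Proposition~\ref{GK-quotient} for the converse, and the $A/I$ description of irreducibles for the ``moreover''. Two minor remarks: your separate treatment of the case $h_A=0$ and of the possibility that $A$ is a division ring are improvements over the paper's somewhat terser argument; on the other hand, in the ``moreover'' clause there is no need to switch to Proposition~\ref{GK-quotient-2}, since $A$ is still prime and Proposition~\ref{GK-quotient} already applies (this is what the paper uses).
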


\begin{proof}
     By Proposition \ref{torsion}, a simple f.g. module $M$ is a torsion module. Now suppose $M$ f.g. torsion. Assume it is generated by $m_1, \ldots, m_n$. Since each $m_i$ is torsion, $Am_i$ is isomorphic to a quotient of the form $A/J_i$, where $J_i$ is a proper left ideal. Hence $\GK \, A/J_i=h_A$ by Proposition \ref{GK-quotient}. As the category of min-holonomic modules is abelian, each $Am_i \in \mathcal{H}(A)$, and hence $M=\sum_ Am_i \in \mathcal{H}(A)$. If $A$ is simple, $\GK(M) \geq 1$, for $M$ is necessarily infinite-dimensional. The simple finitely generated $A$-modules have the form $A/I$, where $I$ is a left-maximal ideal. Applying Proposition \ref{GK-quotient}, $\GK(A/I)=h_A$, and hence the module is min-holonomic.
\end{proof}

\begin{example}
    Every irreducible module for the first Weyl algebra is holonomic.
\end{example}

Finally, we have the following technical improvement of the last proposition:

\begin{proposition}\label{the-real-deal}
Let $A$ be an affine infinite-dimensional algebra which is a prime ring, but not a division algebra, with $\GK \, A=2$. If $M$ is an irreducible infinite-dimensional module, $\GK \, M =1$
\end{proposition}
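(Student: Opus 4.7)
The plan is to establish both inequalities $\GK(M)\le 1$ and $\GK(M)\ge 1$ separately; combining them gives the claim (no integrality assumption is needed).

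For the upper bound I would first reduce to a cyclic quotient of $A$. Since $M$ is irreducible it is cyclic, so for any nonzero $m \in M$ we have $M \cong A/I$ where $I=\Ann_A(m)$ is a maximal left ideal. The ideal $I$ cannot be zero, for otherwise the regular module $_AA$ itself would be simple, forcing $A$ to be a division ring---explicitly excluded by hypothesis. Thus $I$ is a \emph{nonzero proper} left ideal in the affine prime ring $A$, and Proposition \ref{GK-quotient} (applicable because $A$ is prime with positive Gelfand-Kirillov dimension) yields
\[
 \GK(M) \;=\; \GK(A/I) \;\le\; \GK(A)-1 \;=\; 1.
\]

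For the lower bound I would appeal directly to the infinite-dimensionality of $M$. Pick a frame $V$ for $A$ and let $F = k m$ be a one-dimensional generating subspace of $M$. The chain
\[
 F \;=\; V^0 F \;\subseteq\; V^1 F \;\subseteq\; V^2 F \;\subseteq\; \cdots
\]
is nondecreasing with union $AF = Am = M$. If $V^{n_0}F = V^{n_0+1}F$ for some $n_0$, then an immediate induction gives $V^m F = V^{n_0}F$ for every $m \ge n_0$, which would force $M$ to be finite-dimensional---contradicting the hypothesis. Hence the inclusions are strict throughout, so $d_{F,V}(n) = \dim V^n F \ge n+1$ for all $n$, whence
\[
 \GK(M) \;=\; \limsup_{n\to\infty}\log_n d_{F,V}(n) \;\ge\; 1.
\]

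Combining both bounds gives $\GK(M)=1$. The only place requiring any care is the lower bound: one has to rule out the possibility that $\dim V^n F$ stabilizes at a finite value, and the strict-increase-or-eventual-stabilization dichotomy resolves this in one line using $\dim_k M = \infty$. Every other step is a direct appeal to Proposition \ref{GK-quotient} together with the elementary observation, via Schur's lemma, that $A$ being simple as a left module over itself forces $A$ to be a division algebra.
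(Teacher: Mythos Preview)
Your proof is correct and follows the same two-step skeleton as the paper's: write $M\cong A/I$ for a maximal left ideal $I$, apply Proposition~\ref{GK-quotient} to get $\GK(M)\le 1$, and then use infinite-dimensionality to rule out $\GK(M)=0$. Your version is in fact more complete than the paper's, which omits both the verification that $I\neq 0$ (where the ``not a division algebra'' hypothesis enters) and the growth argument giving $\GK(M)\ge 1$; the paper simply asserts ``$\GK\,M=0,1$'' and dismisses $0$ by infinite-dimensionality without spelling out the strict-increase dichotomy for $\dim V^nF$ that you supply.
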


\begin{proof}
    $M$ is an irreducible module, $M \simeq A/I$ for a suitable proper left ideal $I$ of $A$. By Proposition \ref{GK-quotient}, $\GK \, M =0, 1$. Since $M$ is infinite-dimensional, $\GK \, M=1$.
\end{proof}

\section{Algebras with special filtrations}
\label{sec:algebras-with-special-filtrations}

In this section, we will consider affine algebras $A$, and many classes of special filtrations on them, such that the algebra is one of the following four: a) filtered-semicommutative, b) almost commutative, c) somewhat commutative, d) vaguely commutative (a similar notion considered in \cite{BD} and \cite{FS4}).

We will first discuss almost commutative algebras. As this is the most important example in applications (\cite{KL}), a very detailed account is given. To a lesser extent, we tried to do the same for somewhat commutative algebras, but we had to omit some steps, as the theory is very technical. Vaguely commutative algebras and filtered semi-commutative algebras have a brief exposition.

What makes these algebras so relevant, in all cases, is the same: the existence of a Hilbert -Samuel polynomials. So the results, in the end, will follow from  Theorem \ref{theorem-Hilbert-Samuel}.

We remark that those algebras also have the notion of a Poincaré series. The relation of the Gelfand-Kirillov dimension and Poincaré series is the main topic of \cite{Lorenz}. We will dedicate the last subsection of this section to discuss them.

We have 
\[\text{almost commutative $\subsetneq$ somewhat commutative $\subsetneq$ vaguely commutative.}\]

\subsection{Almost commutative algebras}

\begin{definition}[{\cite[Ch. 7]{KL}}]
An affine algebra $A$ is called \emph{almost commutative} if it has a filtration $\mathcal{F}=\{ F_i \}_{i \geq 0}$, with $\mathcal{F}_0=k$, $F_1$ is finite-dimensional, $F_n=F_1^n$, $n>1$, and $\gr_\mathcal{F} \, A$ is  commutative. Such filtrations are sometimes called \emph{standard}, and they are finite-dimensional.
\end{definition}

\begin{proposition}
    If $A$ is almost commutative and $\mathcal{F}$ is a standard filtration on $A$, then 
    $\gr_\mathcal{F} \, A$ is a commutative finitely generated Noetherian algebra. Hence $A$ is Noetherian.
\end{proposition}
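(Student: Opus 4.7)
The plan is to prove the three claims---finite generation, Noetherianity of the graded algebra, and Noetherianity of $A$---in that order, each following essentially formally from the previous.

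First I would show that $\gr_\mathcal{F} A$ is finitely generated as a $k$-algebra. The generators will be produced from a basis of $F_1$. Pick a finite-dimensional complement $V \subset F_1$ to $F_0 = k$, so $F_1 = k \oplus V$, and let $\bar V$ denote the image of $V$ in $F_1/F_0 \subset \gr_\mathcal{F} A$. The hypothesis $F_n = F_1^n$ for all $n \geq 1$ means that any element of $F_n$ can be written as a sum of products of at most $n$ elements from $F_1$; modulo $F_{n-1}$, one can discard all summands in which any factor lies in $F_0 = k$ and then replace the remaining factors with elements of $V$. Hence the $n$-th graded component $F_n/F_{n-1}$ is spanned by products of $n$ elements of $\bar V$, so $\gr_\mathcal{F} A$ is generated as a $k$-algebra by $\bar V$, which is finite-dimensional.

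Next I would deduce Noetherianity of $\gr_\mathcal{F} A$. Since $\gr_\mathcal{F} A$ is a commutative (by the definition of almost commutative), finitely generated $k$-algebra, it is a homomorphic image of a polynomial ring $k[x_1, \ldots, x_r]$, where $r = \dim_k \bar V$. By Hilbert's basis theorem, $k[x_1, \ldots, x_r]$ is Noetherian, and any quotient of a Noetherian ring is Noetherian; hence $\gr_\mathcal{F} A$ is Noetherian.

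Finally, the Noetherianity of $A$ itself follows immediately from Proposition \ref{filtered-graded}(ii), which lifts Noetherianity from the associated graded algebra to a filtered algebra. There is no real obstacle here; the only point requiring a small amount of care is the first step---ensuring that the equality $F_n = F_1^n$ really does force $\gr_\mathcal{F} A$ to be generated in degree one by $\bar V$ rather than merely by the (a priori larger) image of $F_1$ itself. But this is handled by noting that the scalar contributions from $F_0 = k$ become zero upon passing to $F_1/F_0$.
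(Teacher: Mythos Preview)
Your proof is correct and follows the standard argument. The paper itself does not give an independent proof but simply cites \cite[Proposition 7.1]{KL}; your proposal supplies exactly the direct argument one would find there, including the appeal to Proposition~\ref{filtered-graded}(ii) for the final lifting step.
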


\begin{proof}
    \cite[Proposition 7.1]{KL}
\end{proof}

Almost commutative algebras may seem like an artificial generalization of commutative algebras. But, in fact:

\begin{theorem}
    An algebra $A$ is almost commutative if and only if it is a homomorphic image of a universal enveloping algebra $U(\mathfrak{g})$ of a finite-dimensional Lie algebra $\mathfrak{g}$.
\end{theorem}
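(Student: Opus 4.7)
The plan is to prove the two implications separately, both resting on the PBW theorem and the universal property of $U(\mathfrak{g})$.

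For the direction ``$\Leftarrow$'', suppose $A = U(\mathfrak{g})/I$ for some finite-dimensional Lie algebra $\mathfrak{g}$ and some proper two-sided ideal $I$. Equip $U(\mathfrak{g})$ with its standard filtration $\tilde F_n = (k\oplus \mathfrak{g})^n$ from the PBW example, and push it down to $A$ by declaring $F_n := (\tilde F_n + I)/I$. Then $F_0 = k$ (since $I$ is proper, $I\cap k = 0$), $F_1 = (k\oplus\mathfrak{g}+I)/I$ is finite-dimensional, and $F_n = F_1^n$ follows from $\tilde F_n = \tilde F_1^n$. Finally $\gr_\mathcal{F} A$ is a graded quotient of $\gr\, U(\mathfrak{g})\cong\operatorname{Sym}(\mathfrak{g})$, hence commutative. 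So $A$ is almost commutative.

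For ``$\Rightarrow$'', the key observation is that the first piece $V := F_1$ of a standard filtration is closed under commutators in $A$. Indeed, for $v, w \in V$, the images $\bar v,\bar w \in F_1/F_0 \subset \gr_\mathcal{F} A$ satisfy $\bar v\bar w = \bar w\bar v$ by hypothesis, which translates to $vw - wv \in F_1 = V$. Thus $V$ becomes a finite-dimensional Lie algebra $\mathfrak{g}$ under the restricted commutator bracket inherited from $A$. The inclusion $i\colon \mathfrak{g}\hookrightarrow A$ is by construction a Lie algebra homomorphism into $(A,[\,,\,])$, so the universal property of the enveloping algebra produces a unital associative algebra homomorphism $\phi\colon U(\mathfrak{g})\to A$ extending $i$. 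Its image contains $V = F_1$, and since $F_n = F_1^n$ shows that $V$ generates $A$ as an algebra, $\phi$ is surjective.

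The one subtle point is making sure the ``$\Rightarrow$'' direction genuinely produces a Lie algebra: one must verify that the commutator on $A$ restricts to $V$, which is exactly where commutativity of $\gr_\mathcal{F} A$ is used, and then the Jacobi identity for $\mathfrak{g}$ is automatic since it already holds in $A$. The only other technicality is a small bookkeeping issue in the ``$\Leftarrow$'' direction, namely verifying $F_n = F_1^n$ after quotienting by $I$; this follows from the distributivity $((X+I)/I)^n = (X^n + I)/I$ applied with $X = \tilde F_1$. No step looks like a real obstacle, so the proof should be short once these two observations are made explicit.
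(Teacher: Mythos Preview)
Your argument is correct and is essentially the standard proof; the paper itself does not give an argument but simply cites \cite[Theorem 7.2]{KL}, where the same two-step reasoning (quotient filtration for ``$\Leftarrow$'', Lie closure of $F_1$ plus the universal property for ``$\Rightarrow$'') appears. Your handling of the only delicate point, namely that commutativity of $\gr_\mathcal{F} A$ forces $[F_1,F_1]\subset F_1$, is exactly the right observation.
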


\begin{proof}
    \cite[Theorem 7.2]{KL}.
\end{proof}

\begin{example}\label{Bernstein-filtration}
    It is well-known that the Weyl algebra $W_n(k)$ is generated by monomials $x^\alpha y^\beta$ in the standard generators (here we are using the usual multi-index notation). We are now going to introduce the important \emph{Bernstein filtration} $\mathcal{B}=\{ B_i \}_{i \geq 0}$ on $W_n(k)$. Here $B_i$ is the vector space spanned by the monomials $x^\alpha y^\beta$ with $|\alpha|+|\beta| \leq i$. Elementary combinatorics shows that each $B_i$ is finite-dimensional and $B_i=B_1^i$. Finally, $\gr_\mathcal{B} \, W_n(k)= k[z_1, \ldots, z_{2n}]$ (\cite{Coutinho}). So the Weyl algebras are almost commutative, and by Proposition \ref{Proposition 6.6}, $GK \, W_n(k)=2n$
\end{example}

We could also show that $W_n(k)$ is almost commutative by noticing it is an homomorphic image of the enveloping algebra of the $n$:th Heisenberg Lie algebra $\mathfrak{h}_n$ which has a basis $\{a_i,b_i\}_{i=1}^n\cup\{c\}$ with $[a_i,b_i]=c$ and remaining brackets zero.

\begin{proof}
    \cite[Lemma 1.5]{KL}.
\end{proof}

\begin{theorem}[Hilbert polynomials]\label{Hilbert}
    Let $A=k[x_1, \ldots, x_r]$ by graded by the usual degree, and let $M=\bigoplus_{i=0}^\infty M_i$ be a finitely generated graded $A$-module.
\begin{enumerate}[{\rm (a)}]
    \item Each $M_i$ is finite-dimensional.
    \item  There is a polynomial $h_M(x)\in\mathbb{Q}[x]$ of degree at most $r-1$ such that for $n\gg 0$, $\dim M_n=h_M(n)$.
    
    \item For $n\gg 0$, $d_M(n)=\dim(\bigoplus_{i=0}^n M_i)$ is a polynomial of degree $\leq r$ and rational coefficients.
\end{enumerate}
\end{theorem}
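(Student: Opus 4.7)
The plan is to prove all three parts simultaneously by induction on the number of variables $r$, using the standard exact sequence coming from multiplication by $x_r$ together with Proposition \ref{convenient-proposition}(b) to convert difference-equation information into polynomial information.

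First I would dispose of (a): since $A=k[x_1,\dots,x_r]$ with its standard grading has finite-dimensional graded pieces $A_i$, and $M$ is finitely generated, we can pick finitely many homogeneous generators $m_1,\dots,m_s$ of degrees $d_1,\dots,d_s$. Then $M_n = \sum_j A_{n-d_j}\cdot m_j$, a finite sum of finite-dimensional subspaces; this gives part (a). For (b), the base case $r=0$ is immediate: $A=k$ so a finitely generated graded $A$-module is finite-dimensional and $\dim M_n = 0$ for $n \gg 0$, which is the zero polynomial (of degree $\le -1 = r-1$ by convention).

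For the inductive step, I would consider the degree-$1$ endomorphism $x_r\colon M \to M$ of graded $A$-modules and the induced four-term exact sequence
\[0 \to K \to M(-1) \xrightarrow{\,\cdot x_r\,} M \to C \to 0,\]
where $K = \ker(\cdot x_r)$ and $C = M/x_r M$. Since $A$ is Noetherian by the Hilbert basis theorem, $K$ is finitely generated; $C$ is finitely generated as a quotient of $M$. Both are annihilated by $x_r$, so they are finitely generated graded modules over $k[x_1,\dots,x_{r-1}]$. Taking the degree-$n+1$ graded piece of the exact sequence yields
\[\dim M_{n+1} - \dim M_n \;=\; \dim C_{n+1} - \dim K_n.\]
By the inductive hypothesis applied to $C$ and $K$ over $k[x_1,\dots,x_{r-1}]$, each of $\dim C_n$ and $\dim K_n$ agrees with a rational polynomial of degree at most $r-2$ for $n \gg 0$. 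Hence the difference $\dim M_{n+1} - \dim M_n$ agrees with a rational polynomial in $n$ of degree at most $r-2$ for $n \gg 0$. Applying Proposition \ref{convenient-proposition}(b), we conclude that $\dim M_n$ itself agrees with a rational polynomial $h_M(x) \in \mathbb{Q}[x]$ of degree at most $r-1$ for $n \gg 0$, which is part (b).

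Finally for (c), I would observe that $d_M(n+1) - d_M(n) = \dim M_{n+1} = h_M(n+1)$ for $n \gg 0$, a polynomial of degree at most $r-1$ in $n$. A second application of Proposition \ref{convenient-proposition}(b) then shows that $d_M(n)$ agrees with a rational polynomial of degree at most $r$ for $n \gg 0$. The main technical obstacle is ensuring that the algebraic setup of the induction is clean: one must verify that $K$ and $C$ are genuinely finitely generated graded modules over $k[x_1,\dots,x_{r-1}]$ (using Noetherianity of $A$ and the fact that $x_r$ acts as zero), and that the degree-shift conventions in the exact sequence are tracked correctly so that the difference-equation output feeds correctly into Proposition \ref{convenient-proposition}(b).
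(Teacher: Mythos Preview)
Your argument is correct and is the classical proof of the Hilbert polynomial theorem. The paper itself does not give an argument: it simply cites \cite[Theorem 7.4]{KL} for parts (a) and (b), and then observes, exactly as you do, that (c) follows from (b) via Proposition~\ref{convenient-proposition}. So your proposal supplies the details behind that citation rather than offering a different route.

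One small indexing point: with $K$ defined as the kernel of $M(-1)\xrightarrow{\cdot x_r} M$, the degree-$(n+1)$ piece of the four-term sequence reads
\[
0 \to K_{n+1} \to M_n \to M_{n+1} \to C_{n+1} \to 0,
\]
so the identity should be $\dim M_{n+1}-\dim M_n=\dim C_{n+1}-\dim K_{n+1}$ rather than $-\dim K_n$. (Equivalently, if you take $K$ to be the $x_r$-torsion submodule of $M$ itself rather than of $M(-1)$, your formula is correct as written.) This shift is harmless for the induction, and you already flagged the need to track it; just make the convention consistent when you write it up.
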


\begin{proof}
    \cite[Theorem 7.4]{KL}. Note that (b) implies (c) by Proposition \ref{convenient-proposition}.
\end{proof}

\begin{proposition}\label{basic}
    Let $A=\bigoplus_{i=0}^\infty A_i$ be a commutative graded algebra with $A_0=k$ and finitely generated as an algebra by the subspace $A_1$, which is finite-dimensional, and let $M=\bigoplus_{i=0}^\infty M_i$ be a finitely generated graded $A$-module.

    Let $d_M(n)=\dim(\bigoplus_{i=0}^n M_i)$; it is a polynomial in $n$ with rational coefficients and degree $\GK \, M$.

\end{proposition}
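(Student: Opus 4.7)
The plan is to reduce the statement to Theorem \ref{Hilbert} together with Proposition \ref{Proposition 6.6}. First, since $A$ is commutative and generated as an algebra by the finite-dimensional subspace $A_1$, fixing a basis $a_1,\ldots,a_r$ of $A_1$ produces a surjective graded algebra homomorphism $\pi\colon k[x_1,\ldots,x_r]\twoheadrightarrow A$ sending $x_i\mapsto a_i$. Pulling $M$ back along $\pi$, the module becomes a finitely generated graded $k[x_1,\ldots,x_r]$-module whose graded pieces are exactly the $M_i$. Theorem \ref{Hilbert}(a) then gives that each $M_i$ is finite-dimensional, and Theorem \ref{Hilbert}(c) says that $d_M(n)=\dim\bigl(\bigoplus_{i=0}^n M_i\bigr)$ agrees for $n\gg 0$ with a polynomial in $\mathbb{Q}[x]$ of degree at most $r$.

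Next I would interpret $\deg d_M$ via Gelfand--Kirillov theory. Put $F_n A=\bigoplus_{i=0}^n A_i$ and $F_n M=\bigoplus_{i=0}^n M_i$. Both are finite-dimensional (for $A$, because $A_n$ is a quotient of $A_1^{\otimes n}$; for $M$, by the previous paragraph), and the grading axioms immediately give $F_iA\cdot F_jA\subseteq F_{i+j}A$ and $F_iA\cdot F_jM\subseteq F_{i+j}M$. Thus $\mathcal{F}=\{F_nA\}$ is a finite-dimensional filtration on $A$ and $\Omega=\{F_nM\}$ is a finite-dimensional filtration on $M$. The canonical quotient maps $F_nA/F_{n-1}A\to A_n$ and $F_nM/F_{n-1}M\to M_n$ are bijective, so $\gr_\mathcal{F} A\cong A$ as graded algebras and $\gr_\Omega M\cong M$ as graded $A$-modules. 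Since $M$ is finitely generated, $\gr_\Omega M$ is finitely generated over $\gr_\mathcal{F} A$, which by Proposition \ref{good-filtrations} means $\Omega$ is a good filtration. Proposition \ref{Proposition 6.6}(ii) then yields $\GK_A(M)=\gamma(d_M)$, and Proposition \ref{growth}(iii) converts this into $\GK_A(M)=\deg d_M$, as desired.

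The only real obstacle is bookkeeping around the identifications $\gr_\mathcal{F}A\cong A$ and $\gr_\Omega M\cong M$: one must check that the filtration notation $\dim M_n$ used in Proposition \ref{Proposition 6.6}(ii) really matches the cumulative dimension $d_M(n)=\dim\bigl(\bigoplus_{i\le n}M_i\bigr)$ appearing in the present statement, and that ``finitely generated graded'' on the module-theoretic side transports correctly to ``good filtration'' on the filtered side. Once these routine identifications are in place, the proposition follows by combining the three cited results with no further computation.
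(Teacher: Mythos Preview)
Your proof is correct and follows essentially the same approach as the paper: both reduce the polynomiality of $d_M$ to Theorem~\ref{Hilbert} via the surjection $\operatorname{Sym}(A_1)\twoheadrightarrow A$, and both identify $\deg d_M$ with $\GK(M)$ via $\gamma(d_M)$ and Proposition~\ref{growth}(iii). The only difference is in how the identity $\GK_A(M)=\gamma(d_M)$ is obtained: the paper invokes \cite[Proposition~5.1(c)]{KL} (to pass $\GK$ between $A$ and $\operatorname{Sym}(A_1)$) together with \cite[Proposition~6.1(b)]{KL} (the graded statement), whereas you manufacture the filtration $F_nM=\bigoplus_{i\le n}M_i$ from the grading and apply Proposition~\ref{Proposition 6.6}(ii) directly. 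Your route has the small advantage of staying within results already stated in the paper, and your remark about matching the two meanings of ``$\dim M_n$'' is exactly the bookkeeping that makes this work.
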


\begin{proof}
    $A$ is an homomorphic image of $Sym(A_1)$, which is a polynomial algebra in $dim \, A_1$ indeterminates. In this way $M$ becomes a finitely generated graded module over a polynomial algebra, and hence the first assertion follows from Theorem \ref{Hilbert}. By \cite[Proposition 5.1c)]{KL},
    $\GK(M)$ as an $A$-module is the same as $\GK(M)$ as an $Sym(A_1)$-module. By \cite[Proposition 6.1b)]{KL} and Theorem \ref{Hilbert}c), so $\GK(M)=\gamma(d_M(n))$, which is precisely the degree of $d_M(n)$ by Proposition \ref{growth}.
\end{proof}

Let $A$ be an algebra with a filtration $\mathcal{F}$ that turns it into an almost commutative algebra, and $M$ an $A$-module with a finite-dimensional filtration $\Omega$ such that $\gr_\Omega \, M$ is a finitely generated $\gr_\mathcal{F} \, A$-module. By Proposition \ref{basic}, for $n\gg 0$, $d_{\gr_\Omega \, M}(n)=\dim(M_0 \oplus M_1/M_0 \oplus M_2/ M_1 \oplus \ldots \oplus M_n/M_{n-1})=\dim M_n= d_\Omega(n)$ is a polynomial in $n$ with rational coefficients called the \emph{Hilbert-Samuel polynomial}.

The polynomial $d_\Omega(n)$ can be written, using Proposition \ref{convenient-proposition} as

\[d_\Omega(n)= a_d \binom{n}{d} + a_{d-1} \binom{n}{d-1}+ \ldots a_1 \binom{n}{1}+a_0,\]

where $e(M)=a_d$ is a positive integer, called the \emph{multiplicity} of $M$.

\begin{remark}
    We can also define $e(M)$ as (the leading coeficient of $d_\Omega(n)$) $\times$ $(\GK(M))!$.
\end{remark}

By Proposition \ref{Proposition 6.6}, the degree of $d_\Omega(n)$ is $\GK(M)$.

By Theorem \ref{theorem-Hilbert-Samuel}, we have:

\begin{theorem}\label{almost-commutative}
    An almost commutative algebra is very nice algebra. 
    \end{theorem}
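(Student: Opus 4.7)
The plan is to verify that every almost commutative algebra admits Hilbert-Samuel polynomials in the sense of Definition \ref{definition-Hilbert-Samuel}, and then invoke Theorem \ref{theorem-Hilbert-Samuel} to conclude. All the ingredients have effectively been assembled in the preceding discussion; the proof is essentially a bookkeeping exercise.

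First I would check the setup required by Definition \ref{definition-Hilbert-Samuel}. If $A$ is almost commutative with standard filtration $\mathcal{F} = \{F_i\}_{i\ge 0}$, then by definition $F_1$ is finite-dimensional and $F_n = F_1^n$, so $\mathcal{F}$ is a finite-dimensional filtration. The proposition immediately following the definition of almost commutativity tells us that $\gr_\mathcal{F} A$ is commutative, finitely generated, and Noetherian. Thus $A$ sits in the framework of Definition \ref{definition-Hilbert-Samuel}.

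Next, let $M \in \Amod$ and let $\Omega = \{M_i\}_{i\ge 0}$ be a finite-dimensional good filtration on $M$, which exists by Proposition \ref{good-filtrations}. Then $\gr_\Omega M$ is a finitely generated graded module over $\gr_\mathcal{F} A$, and $\gr_\mathcal{F} A$ satisfies the hypotheses of Proposition \ref{basic}: it is commutative and graded with degree-zero part $k$, generated as an algebra by the finite-dimensional degree-one component $F_1 / F_0$. Applying Proposition \ref{basic} to $\gr_\Omega M$ gives that $d_{\gr_\Omega M}(n)$ agrees with a polynomial in $\mathbb{Q}[x]$ for $n \gg 0$. Since
\[
d_{\gr_\Omega M}(n) = \sum_{i=0}^n \dim(M_i/M_{i-1}) = \dim M_n,
\]
this polynomial is precisely the Hilbert-Samuel polynomial $H_{M,\Omega}(x)$ required by Definition \ref{definition-Hilbert-Samuel}.

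Having verified that $A$ admits Hilbert-Samuel polynomials, Theorem \ref{theorem-Hilbert-Samuel} applies and yields that $A$ is very nice. There is no serious obstacle: the only nontrivial input is Proposition \ref{basic}, which in turn rests on the Hilbert polynomial theorem for graded modules over polynomial rings (Theorem \ref{Hilbert}); everything else is formal. In fact, the body of the paragraph preceding the theorem statement already spells out the construction of $d_\Omega(n)$ and identifies its leading coefficient as the multiplicity $e(M)$, so the proof reduces to citing Theorem \ref{theorem-Hilbert-Samuel}.
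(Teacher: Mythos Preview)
Your proposal is correct and follows essentially the same approach as the paper: the discussion preceding the theorem already verifies (via Proposition~\ref{basic}) that almost commutative algebras admit Hilbert-Samuel polynomials, and the paper's proof is then a one-line appeal to Theorem~\ref{theorem-Hilbert-Samuel}. You have simply made explicit the bookkeeping that the paper leaves implicit.
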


\begin{example}
    Let $M$ be the Weyl algebra $W_n(k)$ itself filtered by the Bernstein filtration. $\dim \, B_j= \binom{2n+j}{2n} = \frac{j^{2n}}{(2n)!}$. It is a polynomial of degree $2n$, which implies $\GK \, W_n(k)=2n$, in accordance with Example \ref{Bernstein-filtration}. The multiplicity is the leading coeficient times $(2n)!$ - hence, $1$.
\end{example}

\begin{example}

Let $M=k[x_1,\ldots,x_n]$, with its usual filtration. $M$ is a module for the Weyl algebra $W_n(k)$ and the usual filtration of $M$ is compatible with the Bernstein filtration. $\dim M_j = \binom{n+j}{n}=\frac{j^n}{n!}+ \ldots$, a polynomial in $j$ with degree $n$. Hence $\GK \, M = n$. The multiplicity is the coefficient of the leading term, $1/n!$, times $\GK(M)!$, and hence is 1.
\end{example}

\begin{example}
    Let $M=k[x,x^{-1}]$, with filtration given by $M_j=B_J x^{-1}$. One can see that $M_j$ is spanned by $x^{j-1}, x^{j-2}, \ldots, x^{-j-1}$. So $\dim M_j = 2j +1$. $\GK(M)=1$, as the polynomial has degree 1, and the multiplicity is $2$.
\end{example}

\subsection{Somewhat commutative algebras}

\begin{definition}
    An affine algebra $A$ is called \emph{somewhat commutative} if has a finite-dimensional filtration $\mathcal{F}$ with $F_0=k$ and $\gr_\mathcal{F} \, A$ is affine commutative.
\end{definition}

So the difference between almost commutative algebras and somewhat commutative algebras is that, in the latter case, we do not require that the algebra is generated in degree $1$.

\begin{example}\label{diff-ops-filtration}
    Every almost commutative algebra is a somewhat commutative algebra, but the converse is not true: for instance, the algebra $k[y][x;-y^2 \partial_y$] is somewhat commutative but not almost commutative (\cite[14.3.9]{McConnell}.)
\end{example}

\begin{example}
    Let $A$ be an affine regular algebra. Then $\mathcal{D}(A)$ is a somewhat commutative algebra \cite[Theorem 15.1.20 a)i)ii)]{McConnell}. The associated graded algebra is $\operatorname{Sym}(\operatorname{Der}_k(A))$. Since $A$ is regular, $\GK \, \mathcal{D}(A)= \GK \, \operatorname{Sym}(\operatorname{Der}_k(A)) = 2 \, \GK \, A = 2 \operatorname{Krull} A$.
\end{example}

\begin{example}\label{invariants-weyl-algebra-filtration}
    Consider $W_n(k)$ the rank n Weyl algebra with Bernstein filtration $\mathcal{B}$. As we saw, this turn $W_n(k)$ in an almost commutative algebra. Let $G<GL_n(k)$ be a finite group that acts linearly on $W_n(k)$. Then $W_n(k)^G$ has an inherited filtration $\mathcal{B}^G=\{ B_i^G \}_{i \geq 0 }$. Let $h$ be the natural $n$-dimensional representation of $G$ as given. Then $\gr_{\mathcal{B}^G} \, W_n(k)^G=k[h \oplus h^*]^G$. Since the later algebra is commutative and affine (by Noether's Theorem in invariant theory), the invariants of the Weyl algebra are a somewhat commutative algebra. Also, as the associated graded algebra has Gelfand-Kirillov dimension $2n$, $\GK \, W_n(k)^G = 2n$.
\end{example}

\begin{example}
    The previous example can be generalized considerably. Consider the following alternative realization of the Weyl algebra: Let $(V, \omega)$ be a symplectic vector space. Let $\dim \, V = 2n $. Consider the quotient of the free algebra $k \langle V \rangle$ by the ideal generated by $xy-yx - \omega(x,y), \, x,y \in V$. This is the Weyl algebra $W_n(k)$. Let $G$ be a finite group of automorphisms of $Sp(V)$. Then $G$ acts on the Weyl algebra $W_n(k)$ and preserves the Bernstein filtration.
    The associated graded algebra of $W_n(k)^G$ is $S(V)^G$, and so $W_n(k)^G$ is somewhat commutative. The previous case is a subcase of this: let $h$ be a finite-dimensional vector space of dimension $n$ and $G<GL(h)$ finite. Then $G$ acts on $h^*$ contragrediently and on $h \oplus h^*$ diagonally. The vector space $V=h \oplus h^*$ has a canonical symplectic form, and so the previous example is a case of this one. The symplectic form is: $\omega\langle (y,f), (u,g) \rangle=g(y)-f(u), \, f,g \in h^*, y,u \in h$.
\end{example}

\begin{example} Let $A$ be an affine regular domain, and $G$ a finite group of automorphisms of $A$. Then $\mathcal{D}(A)^G$ is a somewhat commutative algebra \cite[Proposition 5.2]{FS4}. The  associated graded algebra is $\operatorname{Sym}(A)^G$, which has Gelfand-Kirillov dimension $2 \, \GK(A)$, and by Proposition \ref{Proposition 6.6}, $\GK \, \mathcal{D}(A)^G= 2 \, \GK(A)$.
\end{example}

We denote $\mathcal{F}=\{ A_i \}_{i \geq 0}$. If $M$ is a module with a finite-dimensional filtration $\Omega$, we write $\Omega=\{ M_i \}_{i \geq 0}$. This deviate from the notation of the previous subsection but will be very convenient here.

Call $R=\gr_\mathcal{F} \, A$ and $N= \gr_\Omega \, M$, where $\Omega$ is a good filtration (i.e., the associated graded modules is finitely generated over  $R$)

$R$ and $N$ are obviously $\mathbb{N}$-graded, say $R=\bigoplus_{i=0}^\infty R_i$; and the same for $N.$ But we can use this to define a filtration  $\mathcal{R}=\{ R_i^* \}_{i  \geq 0}$, where $R_n^* = \oplus_{i=0}^n R_i$. Similarly we construct a filtration $\mathcal{N}=\{ N_i^* \}_{i \geq o}$.

\begin{lemma}
 There is an $t>0$ such that $R$ is generated, as an algebra, by $R_t^*$, and moreover for every $n$, $R_n^*  \subset (R_t^*)^n  \subset R_{nt}^*$. Also $A_n \subset (A_t)^n \subset A_{nt}$. A similar result holds for $N$.  
\end{lemma}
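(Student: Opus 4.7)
The plan is to exploit the fact that, by Proposition \ref{Proposition 6.6} applied to the somewhat commutative setting, $R=\gr_\mathcal{F}\,A$ is an affine commutative algebra, hence finitely generated by homogeneous elements. Choose a finite set of homogeneous generators $x_1,\dots,x_s$ of $R$ of positive degrees $d_1,\dots,d_s$ (we may drop any scalar generators since $R_0=k$). Set $t:=\max_i d_i$. Then $x_i\in R_{d_i}\subset R_t^*$ for all $i$, so $R_t^*$ generates $R$ as an algebra; this handles the first assertion.

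For the containment $R_n^*\subset (R_t^*)^n$ I would argue that any homogeneous element of degree $m\le n$ is a $k$-linear combination of monomials $x_{i_1}\cdots x_{i_k}$ with $d_{i_1}+\cdots+d_{i_k}=m\le n$; since every $d_j\ge 1$, this forces $k\le n$. Each such monomial lies in $(R_t^*)^k$, and because $1\in R_0\subset R_t^*$ we can pad with $1$'s to land in $(R_t^*)^n$. The reverse containment $(R_t^*)^n\subset R_{nt}^*$ is immediate from the definition of a graded algebra: a product of $n$ homogeneous elements of degree $\le t$ has degree $\le nt$.

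The main obstacle is transporting these containments from the graded side back to the filtered side, i.e.\ proving $A_n\subset (A_t)^n$. I would lift each generator $x_i$ to some $y_i\in A_{d_i}\subset A_t$ and proceed by strong induction on $n$. The base cases $n\le t$ follow since $A_n\subset A_t\subset (A_t)^n$ (again using $1\in A_t$ to pad). For the inductive step, given $a\in A_n$, consider $\bar a=a+A_{n-1}\in R_n$. Because $R_n\subset (R_t^*)^n$ by the previous paragraph, write
\[
\bar a=\sum_\alpha c_\alpha\, x_{i_1^\alpha}\cdots x_{i_{k_\alpha}^\alpha},\qquad d_{i_1^\alpha}+\cdots+d_{i_{k_\alpha}^\alpha}=n,\ k_\alpha\le n,
\]
and set $b:=\sum_\alpha c_\alpha\, y_{i_1^\alpha}\cdots y_{i_{k_\alpha}^\alpha}$. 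Each summand of $b$ lies in $(A_t)^{k_\alpha}\subset (A_t)^n$ (pad with $1$), hence $b\in(A_t)^n$, and by construction $a-b\in A_{n-1}$, which by the inductive hypothesis lies in $(A_t)^{n-1}\subset (A_t)^n$. Thus $a=b+(a-b)\in (A_t)^n$. The containment $(A_t)^n\subset A_{nt}$ is immediate from $F_iF_j\subset F_{i+j}$.

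Finally, the analogous statement for $N=\gr_\Omega\,M$ (and correspondingly for $M$) is proved by the same two-step strategy. Since $R$ is Noetherian and $N$ is a finitely generated graded $R$-module, pick homogeneous $R$-generators $n_1,\dots,n_q$ of degrees $\le t$ (after enlarging $t$ if necessary). Every element of $N_n^*$ is an $R$-linear combination of the $n_j$ with coefficients of filtration degree bounded appropriately, which by the algebra statement already proved lies in $(R_t^*)^n\cdot N_t^*$; lifting the generators $n_j$ to elements $\tilde n_j\in M_t$ and running the same induction on $n$ as above, using that $A_i M_j\subset M_{i+j}$, yields $M_n\subset (A_t)^n M_t$ and $(A_t)^n M_t\subset M_{nt+t}$. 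The only subtlety in this last step is that one must keep track of the ``shift'' coming from $M_t$, but it does not affect the argument since the inductive lifting procedure is identical.
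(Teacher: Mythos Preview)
Your argument is correct and is essentially the standard proof one finds in the cited reference; the paper itself does not give an independent argument but simply refers to \cite[Lemma 8.6.2]{McConnell}. Your choice of $t=\max_i d_i$, the monomial-counting argument for $R_n^*\subset (R_t^*)^n$, and the strong induction lifting $R_n^*\subset (R_t^*)^n$ to $A_n\subset (A_t)^n$ via $a-b\in A_{n-1}$ are exactly the ingredients of that proof.

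One small point worth tidying: in the module case you write ``after enlarging $t$ if necessary'' so that the homogeneous generators of $N$ lie in $N_t^*$. This is harmless for the lemma as stated (``a similar result holds for $N$''), but note that the paper, in the very next lemma (Lemma~\ref{lemma-sca-1}), deliberately keeps the algebra parameter $t$ and the module parameter $s$ separate, obtaining $N_n^*\subset R_n^*N_s^*\subset (R_t^*)^n N_s^*\subset N_{s+nt}^*$ and $M_n\subset (A_t)^n M_s\subset M_{s+nt}$. Your version with a single enlarged $t$ gives the slightly weaker bound $M_{(n+1)t}$ on the right, which is still adequate for the GK-dimension computation that follows, but it is cleaner to introduce $s$ independently so that $t$ remains an invariant of the algebra filtration alone.
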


\begin{proof}
    \cite[Lemma 8.6.2]{McConnell}.
\end{proof}

\begin{lemma}\label{lemma-sca-1}
    Let $M$ be a finitely generated $A$-modules. Let $\Omega$ be a good filtration for $M$. Let $t$ be as in the previous lemma, and there is an $s>0$ such that $N_s^*$ generates $N$ as an $R$-module. Then for each $n$:
    \[ N_n^* \subset R_n^*N_s^* \subset (R_t^*)^n N_s^* \subset R_{nt}^*N_s^* \subset N_{s+nt}^*, \]
    and
    \[ M_n \subset A_n M_s \subset (A_t)^n M_s \subset A_{nt}M_s \subset M_{s+nt}. \]
\end{lemma}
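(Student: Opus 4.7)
The plan is to handle the graded chain first and then lift the essential inclusion $M_n \subset A_n M_s$ to the filtered setting by induction, with all remaining inclusions reducing to the previous lemma and the compatibility axioms $R_i N_j \subset N_{i+j}$ and $A_i M_j \subset M_{i+j}$. First I would extract the witness $s$: since $\Omega$ is a good filtration, $N = \gr_\Omega M$ is a finitely generated $\mathbb{N}$-graded $R$-module, so it admits finitely many homogeneous generators, and taking $s$ at least the maximum of their degrees makes $N_s^* = \bigoplus_{i=0}^s N_i$ contain them all and therefore generate $N$ over $R$.

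For the graded chain, the only nontrivial step is $N_n^* \subset R_n^* N_s^*$. Given a homogeneous $\eta \in N_i$ with $i \leq n$, finite generation by $N_s^*$ lets me write $\eta = \sum_j r_j n_j$ with $r_j \in R$ and $n_j \in N_s^*$ homogeneous and $\deg r_j + \deg n_j = i \leq n$, forcing $r_j \in R_n^*$. The middle inclusions $R_n^* N_s^* \subset (R_t^*)^n N_s^* \subset R_{nt}^* N_s^*$ are exactly the previous lemma applied to $R_n^*$, and $R_{nt}^* N_s^* \subset N_{s+nt}^*$ is immediate from the grading of the $R$-action on $N$.

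For the filtered chain, everything reduces to the first inclusion $M_n \subset A_n M_s$; the subsequent $A_n M_s \subset (A_t)^n M_s \subset A_{nt} M_s \subset M_{s+nt}$ follow directly from the previous lemma and from $A_i M_j \subset M_{i+j}$. I would prove $M_n \subset A_n M_s$ by induction on $n$; the case $n \leq s$ is trivial since $1 \in A_0 \subset A_n$. For $n > s$, given $m \in M_n$, use Step 1 to write the symbol $\bar m \in N_n$ as $\sum_j \bar r_j \bar n_j$ with $\bar r_j \in R_{n-i_j}$ homogeneous, $\bar n_j \in N_{i_j}$ homogeneous, and $i_j \leq s$; lift to $r_j \in A_{n-i_j} \subset A_n$ and $n_j \in M_{i_j} \subset M_s$, so that $m - \sum_j r_j n_j \in M_{n-1} \subset A_{n-1} M_s \subset A_n M_s$ by the inductive hypothesis, giving $m \in A_n M_s$.

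The main obstacle is the induction in the filtered step: one must ensure that the graded-to-filtered lifts respect the filtered degrees prescribed by the graded decomposition, so that $r_j \in A_{n-i_j}$ rather than merely in $A$. This is the standard mechanism by which homogeneous $R$-module generators of $\gr_\Omega M$ bounded in degree by $s$ produce $A$-module generators of $M$ lying in $M_s$, with the finer bookkeeping that $M_n$ is obtained from $M_s$ by acting only with $A_n$. Once this is secured, the remainder of both chains is simply repeated substitution using the previous lemma.
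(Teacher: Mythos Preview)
Your proposal is correct and is precisely the standard argument the paper is gesturing at when it writes ``Similar to that of the previous lemma'' (deferring to \cite[Lemma 8.6.2]{McConnell}): the graded chain follows from homogeneity of generators plus the inclusions $R_n^* \subset (R_t^*)^n \subset R_{nt}^*$ already established, and the filtered inclusion $M_n \subset A_n M_s$ is obtained by the lift-and-induct mechanism you describe. You have filled in exactly the details the paper omits, with no deviation in strategy.
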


\begin{proof}
    Similar to that of the previous lemma.
\end{proof}

An immediate consequence of this fact is the following.

\begin{proposition}
    $\GK(A)=\GK(R)$, and if $M$ is a finitely generated module with good filtration $\Omega$, $\GK(M)=\GK(N)$.
\end{proposition}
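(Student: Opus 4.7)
The plan is to apply Proposition \ref{Proposition 6.6}(ii) twice and then exploit the two telescoping identities
\[\dim R_n^* = \sum_{i=0}^n \dim\bigl(A_i/A_{i-1}\bigr) = \dim A_n, \qquad \dim N_n^* = \sum_{i=0}^n \dim\bigl(M_i/M_{i-1}\bigr) = \dim M_n.\]
The first application, to the filtered data $(A,\mathcal{F})$ with filtered module $(M,\Omega)$, gives at once $\GK_A(M) = \gamma(d_M)$, where $d_M(n) = \dim M_n$.

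The second application is to the re-filtered data $(R,\mathcal{R})$ with filtered module $(N,\mathcal{N})$. To invoke Proposition \ref{Proposition 6.6}(ii) here, one must verify that $R$ is affine, that $N$ is a finitely generated $R$-module, and that $\mathcal{R}$ and $\mathcal{N}$ are finite-dimensional filtrations. This is precisely what the two preparatory lemmas supply: the integer $t$ guarantees that the finite-dimensional subspace $R_t^*$ generates $R$ as an algebra, and the integer $s$ gives $N_s^*$ as a finite-dimensional $R$-generating subspace of $N$. That each $R_n^* = \bigoplus_{i=0}^n R_i$ and each $N_n^*$ is finite-dimensional follows from the finite-dimensionality of $\mathcal{F}$ and $\Omega$. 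With these hypotheses in hand, Proposition \ref{Proposition 6.6}(ii) yields $\GK_R(N) = \gamma(d_N)$ with $d_N(n) = \dim N_n^*$.

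Combining, $\GK_A(M) = \gamma(d_M) = \gamma(d_N) = \GK_R(N)$, the middle equality being the telescoping identity $d_M = d_N$. The algebra statement $\GK(A) = \GK(R)$ is then the special case $M = A$, $\Omega = \mathcal{F}$, for which $N = R$ and $\mathcal{N} = \mathcal{R}$. There is no substantive obstacle; the only real work is the bookkeeping required to justify the second invocation of Proposition \ref{Proposition 6.6}(ii), and this is exactly what the preceding two lemmas were designed to provide.
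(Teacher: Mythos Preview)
Your argument is correct, but you have overlooked that a \emph{single} application of Proposition~\ref{Proposition 6.6}(ii) already yields the whole statement: its conclusion is $\GK_{\gr_\mathcal{F} A}(\gr_\Omega M)=\GK_A(M)$, which with $R=\gr_\mathcal{F} A$ and $N=\gr_\Omega M$ reads $\GK_R(N)=\GK_A(M)$. The second application, the telescoping identity, and the appeal to the two lemmas are all redundant along your route. In particular, $R$ is affine by the very definition of ``somewhat commutative'', and $N$ is finitely generated over $R$ because $\Omega$ is assumed \emph{good}; neither fact requires the lemmas.

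The paper's intended argument is different and does use the lemmas substantively. The sandwiches $M_n\subset (A_t)^nM_s\subset M_{s+nt}$ and $N_n^\ast\subset (R_t^\ast)^nN_s^\ast\subset N_{s+nt}^\ast$ from Lemma~\ref{lemma-sca-1}, combined with Proposition~\ref{growth}(v), give $\GK_A(M)=\gamma(d_M)$ and $\GK_R(N)=\gamma(d_N)$ directly from the definition of Gelfand--Kirillov dimension (using $A_t$ and $R_t^\ast$ as frames and $M_s$, $N_s^\ast$ as generating spaces); then $d_M=d_N$ concludes. Your approach simply packages this computation into a citation of Proposition~\ref{Proposition 6.6}(ii), which is perfectly valid---but then the lemmas should not be credited as supplying the hypotheses you need.
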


Now notice that $R$ is commutative affine. Hence all the results from almost commutative algebras are applicable. We can construct a filtration for $R$ such that it becomes almost commutative, with Hilbert polynomial $p(t)$ which in turn gives $\GK(N)$ and $e(N)$. It is now needed to show that $G(N)$ does not depends of the filtrations for $A$ and $M$; and that $e(N)$ does not depend on the filtrations for $M$. The rest of \cite[Section 8.6]{McConnell} is too technical, so we omit it.

Again by Theorem \ref{theorem-Hilbert-Samuel}:
\begin{theorem}
    A somewhat commutative algebra is a very nice algebra.
\end{theorem}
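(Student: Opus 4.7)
The strategy is to verify that every somewhat commutative algebra admits Hilbert-Samuel polynomials in the sense of Definition \ref{definition-Hilbert-Samuel} and then invoke Theorem \ref{theorem-Hilbert-Samuel}. Let $\mathcal{F}$ be a filtration witnessing the somewhat commutativity of $A$, so that $R := \gr_\mathcal{F} A$ is affine commutative. By the Hilbert Basis Theorem, $R$ is Noetherian, and this verifies the preamble of Definition \ref{definition-Hilbert-Samuel}. It remains to show that for any $M \in \Amod$ with a finite-dimensional good filtration $\Omega = \{M_i\}_{i \geq 0}$, the function $n \mapsto \dim M_n$ agrees with a polynomial for $n \gg 0$.

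The approach is to refilter in order to reduce to the almost commutative setting. Set $N := \gr_\Omega M$, a finitely generated graded $R$-module. By the two lemmas preceding the theorem, there exist $t, s > 0$ such that $R_t^*$ algebra-generates $R$ with $R_n^* \subset (R_t^*)^n \subset R_{nt}^*$, and such that $N_s^*$ generates $N$ as an $R$-module. Consider the refiltration $\mathcal{R}' = \{(R_t^*)^n\}_{n \geq 0}$ of $R$: since $R_0 = k$ lies in $R_t^*$ and $\gr_{\mathcal{R}'} R$ inherits commutativity from $R$, this refiltration exhibits $R$ as almost commutative, and $N$ carries an induced good filtration with respect to $\mathcal{R}'$. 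Theorem \ref{almost-commutative}, via Proposition \ref{basic} and the Hilbert polynomial machinery of Theorem \ref{Hilbert}, then provides a Hilbert-Samuel polynomial governing $\dim (R_t^*)^n N_s^*$, and the sandwich in Lemma \ref{lemma-sca-1} compares this data with the original dimensions $\dim M_n$.

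The main obstacle is this last comparison step: the refiltration yields polynomiality for $\dim N_n^*$ and $\dim M_{s+nt}$ only along the arithmetic progression indexed by $n \mapsto s+nt$, whereas Definition \ref{definition-Hilbert-Samuel} demands an honest polynomial identity $H_{M,\Omega}(n) = \dim M_n$ on all sufficiently large $n \in \mathbb{N}$. To bridge this gap one combines the equivalence-of-good-filtrations principle (cf.\ \cite[Corollary 6.12]{KL}) with the detailed bookkeeping in the remainder of \cite[Section 8.6]{McConnell}, which shows that the degree and Bernstein number extracted from the refiltered polynomial are invariants of $(M,\Omega)$ and do in fact assemble into a genuine polynomial $H_{M,\Omega}$ with $H_{M,\Omega}(n) = \dim M_n$ for $n \gg 0$. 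Once this polynomiality is in hand, $A$ admits Hilbert-Samuel polynomials and Theorem \ref{theorem-Hilbert-Samuel} delivers the conclusion that $A$ is a very nice algebra.
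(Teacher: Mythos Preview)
Your overall route---refilter to reduce to the almost commutative setting and then invoke Theorem \ref{theorem-Hilbert-Samuel}---matches the paper's, and your first two paragraphs track the preceding lemmas faithfully.

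The gap is in your last paragraph. You assert that \cite[Section 8.6]{McConnell} shows the refiltered data ``assemble into a genuine polynomial $H_{M,\Omega}$ with $H_{M,\Omega}(n)=\dim M_n$ for $n\gg 0$.'' That reference does not establish this, and the claim is false in general: when $\gr_\mathcal{F} A$ is not generated in degree $1$, the function $n\mapsto\dim M_n$ is only \emph{quasi}-polynomial (polynomial on each residue class modulo some period). For a concrete instance, take $A=k[x]$ filtered so that $x$ sits in degree $2$; then $\dim A_n=\lfloor n/2\rfloor+1$, which is not a polynomial in $n$. Thus a somewhat commutative algebra need not literally satisfy Definition \ref{definition-Hilbert-Samuel}, and Theorem \ref{theorem-Hilbert-Samuel} cannot be invoked as a black box. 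What the paper and \cite[Section 8.6]{McConnell} actually do is read off $\GK(M)$ and $e(M)$ from the Hilbert polynomial of $N$ with respect to the refiltration $\{(R_t^*)^n N_s^*\}$, prove these numbers are independent of all the choices (of $\Omega$, $t$, $s$), and then verify the axioms of Definition \ref{very-nice} directly from additivity of that refiltered polynomial on short exact sequences---mimicking the \emph{argument} of Theorem \ref{theorem-Hilbert-Samuel} rather than applying its statement. The paper's phrase ``Again by Theorem \ref{theorem-Hilbert-Samuel}'' should be read in this spirit.
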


The class of somewhat commutative algebras is closed under a relevant algebraic operation: almost centralizing extensions.

\begin{definition}
    Let $R \subset S$ be two algebras. $S$ is called an \emph{almost centralizing extension} of $R$ if it is generated as an algebra by $R$ and a finite set $\{ x_1, \ldots, x_n \} \subset S \setminus R$, such that
    \begin{align*}
    [r,x_i] &\in R, \quad\forall r \in R,\, i=1,\ldots,n; \\
    [x_i,x_j] &\in \sum_{k=1}^n x_k R +R,\quad\forall i=1,\ldots,n.
    \end{align*}
\end{definition}

\begin{example}
    Let $R$ be an affine algebra and $\mathfrak{g}$ a finite-dimensional Lie algebra acting by derivations on $R$. Then the smash product $R \# U(\mathfrak{g})$ is an almost centralizing extension of $R$.
\end{example}

\begin{proposition}
    An algebra that is an almost centralizing extension of a somewhat commutative algebra, is again somewhat commutative.
\end{proposition}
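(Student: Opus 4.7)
The plan is to transfer a filtration making $R$ somewhat commutative to a filtration on $S$ by assigning each generator $x_i$ a suitably large weight, and to verify that the resulting associated graded algebra is affine and commutative, so that $S$ is somewhat commutative by definition.

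Let $\{R_m\}$ be a finite-dimensional filtration on $R$ with $R_0 = k$ and associated graded affine commutative. Since $R$ is affine and each $\mathrm{ad}(x_i):R\to R$ is a derivation, applying Leibniz to a finite generating set of $R$ contained in some $R_N$ produces a uniform bound $[x_i,R_m]\subset R_{m+D}$ for some $D$ independent of $i$ and $m$. Enlarging $D$, we may also arrange that in the finitely many relations $[x_i,x_j]=\sum_k x_k r_k^{(ij)}+r_0^{(ij)}$, each of the elements $r_k^{(ij)}, r_0^{(ij)}\in R$ lies in $R_D$. Now pick any integer $w>D$ and define
\[ S_n \;:=\; \mathrm{span}\Big\{\, r_0 x_{i_1} r_1 x_{i_2}\cdots x_{i_k} r_k \;\Big|\; r_j \in R_{m_j},\ kw+\textstyle\sum_j m_j \leq n\,\Big\}. \]
Routine verification shows $\{S_n\}$ is a finite-dimensional filtration with $S_0=k$, $S_m \cdot S_n\subset S_{m+n}$, and $\bigcup_n S_n=S$. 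For commutativity of $\gr S$, note that if $r\in R_m$ then $[x_i,r]\in R_{m+D}\subset S_{m+D}\subset S_{m+w-1}$ since $D<w$; hence $\overline{x_i r}=\overline{r x_i}$ in $\gr_{m+w} S$. Similarly each $[x_i,x_j]$ lies in $S_{2w-1}$, so $\bar x_i$ and $\bar x_j$ commute in $\gr_{2w} S$. Finally, $\gr S$ is affine since it is generated by $\bar x_1,\ldots,\bar x_n$ together with the images of a finite generating set of $\gr R$.

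The main obstacle is establishing the uniform shift $[x_i,R_m]\subset R_{m+D}$: one must exploit that $\mathrm{ad}(x_i)$ is a derivation of the \emph{affine} algebra $R$, so that its action on a finite generating subspace contained in some $R_N$ controls its action on all of $R$ via Leibniz, yielding a uniform bound. Once this is secured, the remainder of the argument is bookkeeping about weighted word lengths in $R$ and the $x_i$'s, and the conclusion that $S$ is somewhat commutative follows directly from the construction.
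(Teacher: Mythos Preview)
The paper does not give a proof of this proposition; it simply cites \cite[Proposition~8.6.9]{McConnell}. Your argument is correct and is essentially the standard construction one finds in that reference: place the new generators $x_i$ in a sufficiently high filtration degree $w$ so that all the commutators $[x_i,r]$ and $[x_i,x_j]$ drop in degree, and then read off that $\gr S$ is affine commutative.

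One point deserves a sharper justification. For the uniform shift $[x_i,R_m]\subset R_{m+D}$ you invoke that $R$ is affine and that $\mathrm{ad}(x_i)$ is a derivation, but affineness of $R$ alone is not what makes the Leibniz argument go through: you need that elements of $R_m$ can be written as sums of monomials in a fixed finite set of generators whose \emph{weighted} degree (with respect to the given filtration) is at most $m$. This holds precisely because $\gr R$ is affine: lift homogeneous generators $\bar a_j\in\gr_{d_j}R$ to $a_j\in R_{d_j}$, and an easy induction on $m$ shows every $r\in R_m$ is a polynomial in the $a_j$ of weighted degree $\le m$. With this in hand, Leibniz gives each term of $[x_i,a_{j_1}\cdots a_{j_\ell}]$ lying in $R_{m-d_{j_s}+M}\subset R_{m+D}$ where $D=\max_j(M-d_j)$ and $M$ bounds the degrees of the finitely many $[x_i,a_j]$. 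Your sketch gestures at this but conflates ``$R$ affine'' with ``$\gr R$ affine''; the latter is the operative hypothesis. You should also note, for completeness, that commutators $[r,r']$ with $r\in R_m$, $r'\in R_{m'}$ land in $R_{m+m'-1}\subset S_{m+m'-1}$ because $\gr R$ is already commutative. With these two clarifications the proof is complete.
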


\begin{proof}
    \cite[Proposition 8.6.9]{McConnell}.
\end{proof}

We insisted on the filtration being finite-dimensional. But in fact we have:

\begin{proposition}
If $A$ is an affine algebra with a (non-necessarily finite dimensional) filtration $\mathcal{F}=\{A_i \}_{i \geq 0}$ with $k \subset A_0$, and $M$ an $A$-module filtered by $\Omega$, then:
\[\GK(A)=\GK(\gr_\mathcal{F} \, A)\quad\text{and}\quad \GK(M)=\GK(\gr_\Omega \, M).\]
\end{proposition}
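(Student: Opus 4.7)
The plan is to prove both equalities by a two-sided inequality argument; the module case is strictly analogous after replacing $V^n$ by $V^n F$ for a finite-dimensional generating subspace $F$ of $M$. Since $\gr_\mathcal{F} \, A$ need not be affine, I use the general definition $\GK(\gr_\mathcal{F} \, A) = \sup_W \gamma(d_W)$ over finite-dimensional subspaces $W$ containing the unit.

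For $\GK(\gr_\mathcal{F}\,A) \leq \GK(A)$, I take a finite-dimensional subspace $W \subset \gr_\mathcal{F}\,A$ with $1\in W$, and replace it by its homogeneous components $W = \bigoplus_{i=0}^m W_i$, $W_i \subset A_i/A_{i-1}$, which only enlarges $W$ and hence cannot decrease growth. Lifting each $W_i$ to a finite-dimensional $V_i \subseteq A_i$ and setting $V = \sum_i V_i \subseteq A_m$, I filter $V^n$ by $(V^n)_j := V^n \cap A_j$; each piece $(V^n)_j/(V^n)_{j-1}$ embeds into $(\gr_\mathcal{F}\,A)_j$. Any product $\bar v_{i_1}\cdots \bar v_{i_n}\in W^n_j$ lifts to $v_{i_1}\cdots v_{i_n}\in V^n\cap A_j$ whose image modulo $A_{j-1}$ is exactly $\bar v_{i_1}\cdots \bar v_{i_n}$, so $W^n_j$ is contained in the image of $(V^n)_j/(V^n)_{j-1}$. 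Summing over $j$ yields $\dim V^n \geq \dim W^n$, and hence $\gamma(d_W)\le\gamma(d_V)\le\GK(A)$, finishing this direction after taking the supremum over $W$.

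For the reverse inequality $\GK(A) \leq \GK(\gr_\mathcal{F}\,A)$, I choose a frame $V$ for $A$. Since $V$ is finite-dimensional and $A=\bigcup_i A_i$, there is an $m$ with $V\subseteq A_m$. Define the finite-dimensional graded subspace
\[
W = \bigoplus_{j=0}^m W_j \subset \gr_\mathcal{F}\,A, \qquad W_j := \text{image of } V\cap A_j \text{ in } A_j/A_{j-1},
\]
noting that $\dim W = \dim V$ by telescoping. The goal is to prove $\dim V^n \leq \dim W^n$, which with $\gamma(d_V)=\GK(A)$ gives $\GK(A) \leq \gamma(d_W) \leq \GK(\gr_\mathcal{F}\,A)$. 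Using the same induced filtration $(V^n)_j = V^n\cap A_j$ and the injection $(V^n)_j/(V^n)_{j-1}\hookrightarrow A_j/A_{j-1}$, it suffices to show that this image lies inside the degree-$j$ homogeneous part $W^n_j$ of $W^n$.

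Establishing this last containment is the main technical obstacle. I would pick a basis $u_1,\ldots,u_k$ of $V$ adapted to the filtration (each $u_i\in V\cap A_{m_i}$ with $m_i$ minimal, so that $\{\bar u_i : m_i=j\}$ is a basis of $W_j$) and expand any $x\in V^n\cap A_j$ as $x = \sum c_{\alpha} u_{\alpha_1}\cdots u_{\alpha_n}$. Grouping by total weight $|\alpha|:=\sum m_{\alpha_k}$ and exploiting the hypothesis $x\in A_j$, the high-weight monomials (those with $|\alpha|>j$) must jointly collapse modulo $A_{j-1}$; carrying out this collapse produces a representation of $x + A_{j-1}$ as a $k$-linear combination of products of symbols $\bar u_{\alpha_1}\cdots \bar u_{\alpha_n}$ of total weight exactly $j$, i.e., as an element of $W^n_j$. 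Proving this controlled cancellation rigorously is the delicate point: it runs parallel to the standard argument behind Proposition~\ref{Proposition 6.6} but must be carried out without using the finite-dimensionality of the individual $A_i$, which is precisely the novelty here.
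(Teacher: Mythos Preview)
Your hard direction $\GK(A)\le\GK(\gr_\mathcal{F} A)$ does not go through, and not for lack of rigor: the containment you aim for is simply false. Take $A=k[x,y]$ filtered by $y$-degree, so $A_i=\{f:\deg_y f\le i\}$ and $\gr_\mathcal{F} A\cong k[x,\bar y]$. The frame $V=k+ky+k(x+y^2)\subset A_2$ gives, under your recipe, $W_0=k$, $W_1=k\bar y$, $W_2=k\bar y^2$ (since $x+y^2\equiv y^2 \bmod A_1$), so $W$ generates only $k[\bar y]$ and $\dim W^n=2n+1$. But $x=(x+y^2)-y\cdot y\in V^2\cap A_0$, so the image of $(V^2)_0$ in $A_0=k[x]$ contains $k+kx\not\subset W^2_0=k$. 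Already $\dim V^2=6>5=\dim W^2$, and $\gamma(d_W)=1<2=\GK(A)$. When a high-weight product of the $u_i$ drops into a lower $A_j$, nothing forces its new symbol to be a product of the $\bar u_i$; your ``controlled cancellation'' has no mechanism to produce one.

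This example also exposes a missing hypothesis in the statement as written: without assuming $\gr_\mathcal{F} A$ affine, the result is false (take $A_0=k$, $A_i=A$ for $i\ge 1$ with $A$ infinite-dimensional affine; then $\gr_\mathcal{F} A$ has square-zero augmentation ideal and $\GK(\gr_\mathcal{F} A)=0$). The paper's proof is a citation to McConnell--Stafford, whose argument is entirely different from yours: assuming $\gr_\mathcal{F} A$ is affine, they \emph{re-filter} $A$, using homogeneous generators of $\gr_\mathcal{F} A$ in bounded degree to build a new \emph{finite-dimensional} filtration $\mathcal{F}'$ with $\gr_{\mathcal{F}'} A\cong\gr_\mathcal{F} A$, after which Proposition~\ref{Proposition 6.6}(ii) applies directly. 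The paper itself flags this re-filtering idea immediately after the statement.
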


\begin{proof}
    \cite{MStafford}.
\end{proof}

\begin{example}
    Suppose the base field algebraically closed of zero characteristic. Let $X$ be an smooth irreducible affine variety and $\mathcal{D}(X)$ the ring of differential operators on $X$. $\mathcal{D}(X)$ has a infinite dimensional filtration, by order of differential operator, such that the associated graded algebra is $\mathcal{O}(T^*X)$. Since $\GK(\mathcal{O}(T^*X))=2 \dim X$, by the above proposition $\GK(\mathcal{D}(X)=2 \dim X$.
\end{example}

This result seems at odd with Proposition \ref{Proposition 6.6}, but the idea of the paper \cite{MStafford} is to somehow reduce the given filtration to a finite-dimensional one. This is the process of re-filtering, which we are going to see further below.

\subsection{Vaguely commutative algebras}

We introduce the following terminology:

\begin{definition}\label{vaguely-commutative}

Let $A$ be an affine algebra. If $A$ has a filtration $\mathcal{F}$ that satisfies all conditions of a somewhat commutative algebra, \emph{except} that we allow $k \subsetneq F_0$, then it is called \emph{vaguely commutative algebra}.
\end{definition}

\begin{remark}
The notion of vaguely commutative algebras were investigated under several additional assumptions in \cite[Section~6.1]{FS4}, under the name \emph{generalized somewhat commutative algebra}.
\end{remark}

\begin{theorem}\label{generalized somewhat commutative algebra}
     All vaguely commutative algebras are modest algebras.
\end{theorem}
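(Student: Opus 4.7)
The strategy is to adapt the proof of Theorem \ref{theorem-Hilbert-Samuel} to this slightly more general setting, producing a Hilbert--Samuel-type function whose leading coefficient (times $d!$) is a positive rational rather than a positive integer. This is precisely why the conclusion weakens from very nice to \emph{modest}.

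First, given a filtration $\mathcal{F} = \{F_i\}_{i \ge 0}$ exhibiting $A$ as vaguely commutative, set $R = \gr_\mathcal{F} A$. Since $R_0 = F_0$ is a finite-dimensional (hence Artinian, Noetherian) commutative $k$-algebra and $R$ is finitely generated over $R_0$ by homogeneous elements of some positive degrees $e_1, \ldots, e_s$, the ring $R$ is a quotient of $R_0[x_1, \ldots, x_s]$, hence Noetherian by Hilbert's basis theorem. Tauvel's theorem then yields exactness of $\GK$ on $\Amod$, verifying condition (i) of Definition \ref{very-nice}.

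Next, for $M \in \Amod$ with good filtration $\Omega$, set $N = \gr_\Omega M$. Each $N_n$ is finitely generated over $R_0$, hence of finite length, hence finite-dimensional over $k$. A routine adaptation of the Hilbert--Serre theorem (replacing the assumption $R_0 = k$ by $R_0$ Artinian) shows that the Hilbert series
\[ H_N(t) \;=\; \sum_{n \ge 0} \dim_k(N_n)\, t^n \]
is a rational function of the form $f(t)/\prod_{i=1}^s(1-t^{e_i})$ with $f(t) \in \mathbb{Z}[t]$. Let $d$ be the order of the pole of $H_N$ at $t=1$; then $d = \GK_R N = \GK_A M$ is a non-negative integer (condition (ii)), and this pole has the maximum order among the poles of $H_N$ at all roots of unity, a standard property of Hilbert series of positively graded Noetherian modules. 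Setting $c := \lim_{t \to 1}(1-t)^d H_N(t) \in \mathbb{Q}_{>0}$ (positive because $N \ne 0$ forces the pole at $1$ to have exactly the asserted order with positive leading Laurent coefficient), one obtains the asymptotic $\dim_k M_n = (c/d!)\, n^d + o(n^d)$.

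Define the multiplicity $e(M) := c$, a positive rational. Its independence of the choice of good filtration follows by the standard argument of \cite[Chapter 7]{KL}. Given a short exact sequence $0 \to M' \to M \to M'' \to 0$, equip $M$ with a good filtration $\Omega$ and $M', M''$ with the induced filtrations, just as in the proof of Theorem \ref{theorem-Hilbert-Samuel}; the associated graded sequence is exact, whence $H_N(t) = H_{N'}(t) + H_{N''}(t)$. Extracting the Laurent coefficient of $(1-t)^{-d}$ at $t=1$, with $d = \GK M$, yields conditions (a)--(c) of Definition \ref{very-nice}(iii), and condition (d) is immediate from $M \ne 0 \Rightarrow c > 0$. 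The main technical obstacle is the Hilbert series analysis: the rationality of $H_N$ in the prescribed form together with the dominance of the pole at $t=1$ over those at other roots of unity. Both facts are routine extensions of the classical case $R_0 = k$ but must be handled with care, and it is precisely the passage from $R_0 = k$ to an Artinian $R_0$ that may scale the leading asymptotic by a non-integer factor, forcing $e(M) \in \mathbb{Q}_{\ge 0}$ rather than $\mathbb{N}$.
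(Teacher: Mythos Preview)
Your argument is correct and is essentially an explicit unpacking of what the paper does by citation: the paper's proof invokes \cite[Theorem 3.2, Proposition 3.3]{BD} and \cite[Lemma 2.1, Proposition 6.6]{KL}, and Bavula's results in \cite{BD} proceed exactly via the rationality of the Hilbert/Poincar\'e series and the analysis of the pole at $t=1$ that you carry out (indeed, the paper notes in Section~\ref{sec:algebras-with-special-filtrations} that ``the same proof'' yields the Poincar\'e-series theorem immediately following).

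One small inaccuracy in your closing commentary, though it does not affect the proof: the non-integrality of $e(M)$ in your construction stems from the generator degrees $e_i>1$, not from enlarging $R_0$. With your Hilbert-series definition the multiplicity is already rational for \emph{somewhat} commutative algebras (where $R_0=k$); compare the paper's remark that $e(M)$ has shape $n/\ell^{\GK(M)}$ with $\ell=\operatorname{lcm}(e_i)$. The paper gets integer multiplicities in the somewhat commutative case only because in that section it uses a \emph{different} multiplicity, obtained by first re-filtering $R$ to make it almost commutative (Lemma~\ref{lemma-sca-1} and the discussion after it).
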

\begin{proof}
    \cite[Theorem 3.2, Proposition 3.3]{BD} and \cite[Lemma 2.1, Proposition 6.6]{KL}.
\end{proof}

In general, for a vaguely commutative algebra  the multiplicity of a module does not belong to $\mathbb{N}$. It is of the form $n/\ell^{\GK(M)}$, where $n \in \mathbb{N}^+$ and $\ell$ is the least common multiple of the degree of the homogeneous generators of $\gr_\mathcal{F} \, A$.

We have a fundamental example of this situation. Let $H_c(V,\Gamma)$ be a symplectic reflection algebra at $t=1$ and $U_c(V,\Gamma)$ its spherical subalgebra (the canonical reference for these is \cite{EG}).

We have a filtration for $H_c(V, \Gamma)$ given by $F_{-1}=0$, $F_0=\mathbb{C} \Gamma$, $F_1=\mathbb{C}\Gamma + \mathbb{C} \Gamma V$. $F_n=F_1^n$, $i \geq 2$. Let $\mathcal{G}$ be the filtration induced on $U_c(V,\Gamma)$.

\begin{proposition}
    $\gr_\mathcal{G} U_c(V,\Gamma) \simeq S(V)^\Gamma$. Hence, $U_c(V, \Gamma)$ is vaguely commutative.
\end{proposition}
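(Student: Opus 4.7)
The plan is to reduce the statement to the PBW theorem for the symplectic reflection algebra $H_c(V,\Gamma)$ combined with the standard fact that averaging by the trivial idempotent intertwines taking invariants with taking the spherical subalgebra. Set $e=\frac{1}{|\Gamma|}\sum_{\gamma\in\Gamma}\gamma\in F_0$, so that $U_c(V,\Gamma)=eH_c(V,\Gamma)e$. The PBW theorem of Etingof--Ginzburg (see \cite{EG}) asserts that the prescribed filtration $\mathcal{F}$ on $H_c(V,\Gamma)$ satisfies $\gr_{\mathcal{F}}H_c(V,\Gamma)\simeq S(V)\#\Gamma$ as graded algebras, where on the right-hand side $\Gamma$ sits in degree $0$ and $V$ in degree $1$.

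First I would verify that passing to the spherical subalgebra commutes with taking the associated graded. Because $e\in F_0$ is an idempotent and each $F_n$ is a $\mathbb{C}\Gamma$-bimodule, one has $eF_ne=(eH_ce)\cap F_n$ and the induced filtration $\mathcal{G}_n=eF_ne$ satisfies $\mathcal{G}_n/\mathcal{G}_{n-1}\simeq e(F_n/F_{n-1})e$ as vector spaces, and this identification respects multiplication. Therefore there is a canonical graded algebra isomorphism
\[
\gr_{\mathcal{G}}U_c(V,\Gamma)\;\simeq\;e\cdot\gr_{\mathcal{F}}H_c(V,\Gamma)\cdot e\;\simeq\;e(S(V)\#\Gamma)e.
\]
The standard identification $e(S(V)\#\Gamma)e\simeq S(V)^{\Gamma}$ (where $a\in S(V)^{\Gamma}$ corresponds to $eae=ea$) then yields $\gr_{\mathcal{G}}U_c(V,\Gamma)\simeq S(V)^{\Gamma}$, proving the first assertion.

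Next I would check the axioms of a vaguely commutative algebra from Definition~\ref{vaguely-commutative}. The associated graded $S(V)^{\Gamma}$ is manifestly commutative, and by Hilbert's finiteness theorem (Noether's theorem for finite groups) it is affine, hence so is $U_c(V,\Gamma)$ by Proposition~\ref{filtered-graded}. Each piece $\mathcal{G}_n=eF_ne$ is finite dimensional because $F_n=F_1^n$ with $F_1$ finite dimensional, so the filtration is finite dimensional. The only feature distinguishing $\mathcal{G}$ from a somewhat commutative filtration is that $\mathcal{G}_0=e(\mathbb{C}\Gamma)e=\mathbb{C}e\cdot Z(\mathbb{C}\Gamma)\cdot e$ need not coincide with $\mathbb{C}$—in general $\mathcal{G}_0\supsetneq\mathbb{C}$—which is exactly the relaxation permitted in Definition~\ref{vaguely-commutative}. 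Hence $U_c(V,\Gamma)$ is vaguely commutative.

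The main obstacle is the compatibility of the graded functor with the corner-algebra construction; strictly speaking one must be a little careful because the idempotent $e$ lies in the degree-zero part of $\gr_{\mathcal{F}}H_c$, and one must check that multiplication by $e$ on either side induces the filtered/graded decomposition without collapse. I would address this by invoking the elementary fact that if $I\subseteq A$ is a filtered ideal and $f\in F_0(A)$ is an idempotent, then $\gr(fAf)=f\,\gr(A)\,f$ canonically, which follows directly from $fF_nf\cap F_{n-1}=fF_{n-1}f$. Once this is in place, everything else is formal.
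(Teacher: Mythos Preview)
Your argument is essentially the content behind the paper's one-line citation to \cite[Theorem~1.3]{EG}: the PBW isomorphism $\gr_{\mathcal F}H_c(V,\Gamma)\simeq S(V)\#\Gamma$, the compatibility of $e(\,\cdot\,)e$ with passing to the associated graded, and the identification $e(S(V)\#\Gamma)e\simeq S(V)^\Gamma$. So the overall strategy is the same, just unpacked.

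There is one slip worth correcting. You write that $\mathcal G_0=e(\mathbb C\Gamma)e$ ``need not coincide with $\mathbb C$'' and that ``in general $\mathcal G_0\supsetneq\mathbb C$''. In fact, for the trivial idempotent $e=\tfrac{1}{|\Gamma|}\sum_{\gamma}\gamma$ one has $\gamma e=e\gamma=e$ for every $\gamma\in\Gamma$, so $e(\mathbb C\Gamma)e=\mathbb C e\simeq\mathbb C$ always. (Your intermediate expression $\mathbb C e\cdot Z(\mathbb C\Gamma)\cdot e$ would collapse to $\mathbb C e$ for the same reason.) This does not damage the conclusion---indeed it shows $U_c(V,\Gamma)$ is already somewhat commutative, hence a fortiori vaguely commutative---but the justification you give for why the weaker notion is needed is not the right one. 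Otherwise the proof is fine; the filtered-idempotent lemma $\gr(eAe)=e\,\gr(A)\,e$ you invoke is exactly what makes the argument go through.
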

\begin{proof}
    \cite[Theorem 1.3]{EG}.
\end{proof}

\subsection{Filtered semi-commutative algebras}

\begin{definition}
    An affine algebra $A$ is called \emph{semi-commutative} if it is generated by a finite number of elements $x_1, \ldots, x_n$ with $x_i x_j = \lambda_{ij} x_j, x_i$, $0 \neq \lambda_{ij}$, for all $i,j$. That is, $A$ is a homomorphic image of a quantum affine space.
\end{definition}

\begin{definition}
    An affine algebra $A$ is called \emph{filtered semi-commutative} if it has a finite-dimensional filtration $\mathcal{F}$, with $F_0=k$, such that $\gr_\mathcal{F} \, A$ is semi-commutative
\end{definition}

\begin{example}
As example of such algebras we have $\mathcal{O}_q\big(\mathbb{M}_n(k)\big)$, $\mathcal{O}_q\big(GL_n(k)\big)$,
$\mathcal{O}_q\big(SL_n(k)\big)$ and $U_q\big(\mathfrak{sl}_n(k)\big)$, where $q\in k^\times$ is not a root of unity \cite{McConnnel2}.
\end{example}

\begin{theorem}
 Filtered semi-commutative algebras are very nice.
 \end{theorem}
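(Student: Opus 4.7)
The plan is to apply Theorem \ref{theorem-Hilbert-Samuel}: it suffices to show that a filtered semi-commutative algebra $A$ admits Hilbert-Samuel polynomials in the sense of Definition \ref{definition-Hilbert-Samuel}. Once this is established, the very-nice conclusion is automatic.

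First, I would check that $\gr_\mathcal{F} A$ is affine Noetherian. Affineness is built into the definition; for the Noetherian property, $\gr_\mathcal{F} A$ is semi-commutative, hence a homomorphic image of a quantum affine space on finitely many generators, and a quantum affine space admits an iterated Ore extension presentation and is well-known to be Noetherian. Thus $\gr_\mathcal{F} A$ is Noetherian as a quotient.

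Second, fix $M\in\Amod$ equipped with a finite-dimensional good filtration $\Omega=\{M_n\}_{n\ge 0}$. By Proposition \ref{good-filtrations}, $\gr_\Omega M$ is a finitely generated graded $\gr_\mathcal{F} A$-module, and pulling back along the surjection from the quantum affine space onto $\gr_\mathcal{F} A$, it becomes a finitely generated graded module over that quantum affine space. By the result of \cite{MP} that is recalled in the example following Theorem \ref{theorem-Hilbert-Samuel}, every such graded module admits a Hilbert polynomial: there exists $h(x)\in\mathbb{Q}[x]$ with $\dim\bigl(M_n/M_{n-1}\bigr)=h(n)$ for $n\gg 0$. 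Proposition \ref{convenient-proposition}(b) then converts this first-difference polynomiality into the desired statement that $\dim M_n$ agrees for $n\gg 0$ with a polynomial $H_{M,\Omega}(x)\in\mathbb{Q}[x]$. This is exactly the content of Definition \ref{definition-Hilbert-Samuel}, so Theorem \ref{theorem-Hilbert-Samuel} applies and delivers the conclusion.

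The main obstacle is really the external ingredient: the existence of a Hilbert polynomial for a finitely generated graded module over a quantum affine space. The underlying reason is that, although the ring is noncommutative, it still has a PBW-type ordered monomial basis whose homogeneous components have the same dimensions as the corresponding components of an ordinary polynomial ring; this lets one transport the commutative Hilbert-polynomial argument to the quantum setting. Once this input from \cite{MP} is accepted, the remainder of the proof parallels the almost-commutative and somewhat-commutative cases treated earlier in the section, with no new complications.
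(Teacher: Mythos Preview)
Your proposal is correct and matches the route the paper intends: the paper's own proof is a bare citation to \cite{McConnnel2}, and the surrounding text explicitly says that all four filtration classes are handled via Theorem~\ref{theorem-Hilbert-Samuel} by producing Hilbert--Samuel polynomials, which is precisely what you do. The only caveat is that the generators of $\gr_\mathcal{F} A$ need not sit in degree~$1$, so the quantum affine space you pull back to must carry a weighted grading; the Hilbert-polynomial input you invoke (from \cite{MP} or, equivalently, the content of \cite{McConnnel2}) covers this, but it is worth flagging since it parallels the almost/somewhat commutative distinction treated earlier in the section.
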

 \begin{proof}
     This is the main result in \cite{McConnnel2}. 
     
 \end{proof}

\subsection{Some comments on Poincaré series}

\begin{definition}
    Let $A=\bigoplus_{i=0}^\infty A_i$ be an algebra with finite-dimensional grading. The Poincaré series of $A$ is the generating function $P_A(t)=\sum_{i=0}^\infty \operatorname{dim} \, A_i t^i$. If $A$ has a finite-dimensional filtration, we denote by $P_A(t)$ the Poincaré-Hilbert series of the associated graded algebra of $A$. 
\end{definition}

It is well known that we have a natural map $\iota: \mathbb{Q}(t) \into \mathbb{Z}((t))$, where $\mathbb{Q}(t)= \operatorname{Frac} \, \mathbb{Z}[t]$, and $\mathbb{Z}((t))=\operatorname{Frac} \, \mathbb{Z}[[t]]$. $P_A(t)$ belongs to $\mathbb{Z}[[t]]$. The fundamental question is: when there are $p(t),0 \neq q(t) \in \mathbb{Z}[t]$ such that $\iota\big((p(t)/q(t)\big)=P_A(t)$? That is, when $P_A(t)$ can be expressed as a rational function?

The next result shows why this is an interesting question.

\begin{theorem}
    Let $P(t)=\sum_{i=0}^\infty f(i) t^i$ be any element of $\mathbb{Z}[[t]]$. The following are equivalent:
    \begin{enumerate}[{\rm (a)}]
    \item $P(t)$ is a rational function; that is, there are $p(t),0 \neq q(t) \in \mathbb{Z}[t]$, $q(0)=1$, with $P(t)=p(t)/q(t)$
    \item There exists $0 \neq s \in \mathbb{N}$ and $a_1, \ldots, a_s \in \mathbb{Z}$, such there is a recurrence relation, for $n\gg 0$:
    \[ f(n+s)=a_1f(n+s-1)+ \ldots + a_s f(n). \]
    \item There exists polynomials $p_j(x) \in \bar{\mathbb{Q}}(x)$, where $\bar{\mathbb{Q}}$ denote the algebraic numbers, and elements $\alpha_j \in \bar{\mathbb{Q}}$, $j=1,\ldots, s$, such that $f(n)=\sum_{j=1}^s p_j(n) \alpha_j^n$, for $n\gg 0$.
    \end{enumerate}
\end{theorem}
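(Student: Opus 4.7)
The plan is to close the cycle $(\text{a}) \Rightarrow (\text{b}) \Rightarrow (\text{c}) \Rightarrow (\text{a})$, together with the direct (and easy) implication $(\text{b}) \Rightarrow (\text{a})$. Throughout I write $P(t) = \sum_{i \geq 0} f(i) t^i$ with $f(i) \in \mathbb{Z}$.

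\textbf{Step 1: $(\text{a}) \Leftrightarrow (\text{b})$.} For $(\text{a}) \Rightarrow (\text{b})$, write $q(t) = 1 - a_1 t - a_2 t^2 - \cdots - a_s t^s \in \mathbb{Z}[t]$ (any denominator with $q(0) = 1$ is of this shape, after padding with zero coefficients if necessary). From $q(t)\,P(t) = p(t)$, the coefficient of $t^{n+s}$ on the left is $f(n+s) - a_1 f(n+s-1) - \cdots - a_s f(n)$, while for $n+s > \deg p$ the right side vanishes; this gives the integer recurrence. For the converse, given a recurrence as in (b), define the very same $q(t) \in \mathbb{Z}[t]$ with $q(0) = 1$. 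Then $q(t)\,P(t)$ is a power series whose coefficients of $t^m$ vanish for all large $m$, so $q(t)\,P(t) = p(t) \in \mathbb{Z}[t]$.

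\textbf{Step 2: $(\text{b}) \Rightarrow (\text{c})$.} Form the characteristic polynomial
\[ \chi(x) = x^s - a_1 x^{s-1} - \cdots - a_s \in \mathbb{Z}[x], \]
and factor it over $\bar{\mathbb{Q}}$ as $\chi(x) = \prod_j (x - \alpha_j)^{m_j}$, with $\sum_j m_j = s$. The space of $\bar{\mathbb{Q}}$-valued sequences satisfying the recurrence is an $s$-dimensional $\bar{\mathbb{Q}}$-vector space. A direct check shows that for each root $\alpha_j$ of multiplicity $m_j$, the sequences $n \mapsto n^k \alpha_j^n$ with $0 \leq k < m_j$ lie in this space, and a Vandermonde-style argument shows these $s$ sequences are linearly independent, hence form a basis. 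Expressing $f$ in this basis yields $f(n) = \sum_j p_j(n) \alpha_j^n$ for $n \gg 0$, with $p_j \in \bar{\mathbb{Q}}[x]$ of degree $< m_j$.

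\textbf{Step 3: $(\text{c}) \Rightarrow (\text{a})$.} Using the standard formal identity
\[ \sum_{n \geq 0} n^k \alpha^n t^n = \frac{\varphi_k(\alpha t)}{(1 - \alpha t)^{k+1}} \]
for suitable $\varphi_k \in \mathbb{Z}[t]$ (obtained by iterated application of $t\frac{d}{dt}$ to the geometric series), each summand $p_j(n)\alpha_j^n$ contributes a rational function in $t$ over $\bar{\mathbb{Q}}$, so $P(t) \in \bar{\mathbb{Q}}(t)$. Since the coefficients $f(n)$ lie in $\mathbb{Z}$, the series $P(t)$ is fixed by the absolute Galois group acting on coefficients; hence $P(t) \in \mathbb{Q}(t)$. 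Finally, an appeal to the Fatou lemma---an integer power series which represents a rational function over $\mathbb{Q}$ has a representation $A(t)/B(t)$ with $A, B \in \mathbb{Z}[t]$ and $B(0) = \pm 1$, after which absorbing the sign gives $q(0) = 1$---completes the implication.

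The main obstacle lies in Step 3, in passing from a rational expression with coefficients in $\mathbb{Q}$ (or $\bar{\mathbb{Q}}$) to one with coefficients in $\mathbb{Z}$ and constant term $1$ in the denominator; the other arrows are formal manipulations or an application of the classical theory of constant-coefficient linear recurrences. The Fatou-type integrality statement is the nontrivial input, but it is a classical fact (and can also be proved in a few lines by observing that the minimal denominator is determined by the recurrence relation produced by Step 1, which already has integer coefficients and constant term one).
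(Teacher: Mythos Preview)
Your argument is correct and self-contained, whereas the paper does not actually prove this result: it simply cites \cite[Chapter III, Section 1]{Lorenz0} and \cite[Theorem 4.1.1]{Stanley}. So there is no substantive comparison to make---you have supplied a full proof where the paper defers to the literature. Two small remarks. In Step~2 your basis $\{n\mapsto n^k\alpha_j^n\}$ tacitly assumes $\alpha_j\neq 0$; this is harmless, since if $a_s=0$ one first shortens the recurrence until the last coefficient is nonzero (equivalently, until $\chi(0)\neq 0$). In Step~3 your main line---rationality over $\bar{\mathbb{Q}}$, descent to $\mathbb{Q}$ by Galois invariance, then Fatou's lemma to obtain an integral denominator with constant term $1$---is exactly right, and your identification of Fatou as the only nontrivial input is accurate. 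The parenthetical sketch (``the minimal denominator is determined by the recurrence relation produced by Step~1, which already has integer coefficients'') reads as circular, since Step~1 presupposes~(a); I would either drop it or replace it with an actual one-line argument for Fatou (e.g.\ reduce modulo a prime dividing the constant term of a primitive integral denominator and derive a contradiction with coprimality). None of this affects the validity of the proof.
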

\begin{proof}
    \cite[Chapter III, Section 1]{Lorenz0}, \cite[Theorem 4.1.1]{Stanley}\footnote{\cite{Stanley} is not an algebra book: it is about enumerative combinatorics!}.
\end{proof}

The Poincaré series is a polynomial if and only if the algebra is finite-dimensional, and hence has Gelfand-Kirillov dimension 0.

\begin{theorem}\label{rational-Poincaré}
    Let $P_A(t)=\sum_{n=0}^\infty f(n) t^n$ be the Poincaré series of an infinite-dimensional algebra $A$ with a finite-dimensional grading/filtration. If $P_A(t)$ is a rational function then its radius of convergence $r$ is $\leq 1$ and either

    \begin{itemize}
        \item $r<1$ and the algebra $A$ has exponential growth (and hence, in particular, infinite Gelfand-Kirillov dimension), or

        \item $r=1$ and
        \[ P_A(t)=\frac{p(t)}{(1-t^s)^d},\]
        for some polynomial $p(t)$ with $p(1) \neq 0$
\end{itemize}
In the later case there exists polynomials $f_1, \ldots, f_s$ such that $f(n)=f_i(n)$ for $n \equiv i (mod \, s)$ and $n\gg 0$; $max\{ deg(f_i) \}= d-1$, $\GK(A)=d$.
\begin{proof}
    \cite[Chapter III, Section 1]{Lorenz0}, \cite[Proposition 4.1]{Stanley}.
\end{proof}
    
\end{theorem}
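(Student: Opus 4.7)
The plan is to analyze $P_A(t)$ as a meromorphic function whose singularities on the disk of convergence encode the asymptotics of its coefficients. Writing $P_A(t) = p(t)/q(t)$ in lowest terms with $q(0) = 1$, the radius of convergence $r$ equals the minimum modulus of a root of $q$. Since $A$ is infinite dimensional, $\sum_{n \geq 0} f(n) = \infty$, so $P_A(1) = +\infty$ and hence $r \leq 1$.

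In the case $r < 1$, the Cauchy--Hadamard formula gives $\limsup_n f(n)^{1/n} = 1/r > 1$. Choosing $\rho$ with $1 < \rho < 1/r$, one gets $f(n) \geq \rho^n$ along a subsequence; since the $f(n)$ are nonnegative, the same estimate holds for the partial sums $d(n) = \sum_{i=0}^n f(i)$, which govern the growth of $A$. This produces growth at least $\mathcal{E}_1$, and combined with the general upper bound $\mathcal{G}(A) \leq \mathcal{E}_1$ for affine algebras we conclude that $A$ has exponential growth, hence $\GK(A) = \infty$.

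In the case $r = 1$, Pringsheim's theorem (applicable because $f(n) \geq 0$) forces $t = 1$ to be a singular point, hence a pole of $q$ of some order $d \geq 1$. The crucial step is to argue that every pole of $q$ on the unit circle is a root of unity. This is the content of \emph{Fatou's theorem} on rational power series with integer coefficients: if a series with coefficients in $\mathbb{Z}$ represents a rational function with radius of convergence $1$, then in lowest terms with $q(0) = 1$ the denominator $q(t)$ is a product of cyclotomic polynomials. Choosing $s$ so that every root of $q$ is an $s$-th root of unity and clearing denominators by multiplying numerator and denominator by a suitable polynomial, one obtains $P_A(t) = p(t)/(1 - t^s)^d$; the exponent $d$ equals the order of the pole at $t = 1$ by construction, so $p(1) \neq 0$.

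To finish, a partial-fraction decomposition of $p(t)/(1-t^s)^d$ over the $s$-th roots of unity gives the quasi-polynomial formula $f(n) = f_i(n)$ for $n \equiv i \pmod s$ and $n \gg 0$; the pole of order $d$ at $t = 1$ contributes a term of degree $d - 1$ in $n$ to every residue class, while subordinate poles on the unit circle have lower order and contribute polynomials of smaller degree, yielding $\max_i \deg f_i = d - 1$. Summing, $d(n)$ is asymptotically a quasi-polynomial of degree $d$, so Proposition~\ref{growth} gives $\GK(A) = \gamma(d) = d$. The main obstacle is Fatou's theorem: it is the one place where the integrality of the dimensions $\dim A_n$ enters decisively, ruling out non-cyclotomic poles of $q$ on the unit circle; everything else reduces to standard complex analysis and partial-fraction bookkeeping.
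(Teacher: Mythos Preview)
Your argument is essentially the standard one underlying the references the paper cites, but there is a real gap at the step where you pass from ``$t=1$ is a pole'' to ``$P_A(t)=p(t)/(1-t^s)^d$ with $p(1)\neq 0$''. Pringsheim guarantees only that $t=1$ is \emph{a} singularity; it does not say its order is maximal among the poles on $|t|=1$. If some other root of unity $\zeta$ were a pole of order strictly greater than the order $d_1$ at $t=1$, then $(1-t^s)^{d_1}$ would not clear it and you would be forced to raise the exponent, making $p(1)=0$. Your later assertion that ``subordinate poles on the unit circle have lower order'' is precisely the unproved point. The fix is short and uses non-negativity again: for $0<\rho<1$ and any $\theta$ one has
\[
\bigl|P_A(\rho e^{i\theta})\bigr|\le \sum_{n\ge 0} f(n)\rho^n=P_A(\rho),
\]
and comparing blow-up rates as $\rho\to 1^-$ shows that the order of the pole at $t=1$ is at least that of any other pole on the unit circle. (Note: \emph{at least}, not strictly greater; equal-order poles at other roots of unity are permitted, as $P_A(t)=1/(1-t^2)$ already shows, so your phrase ``lower order'' should be ``order $\le d$''.)

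Two smaller remarks. What you call ``Fatou's theorem'' is really Fatou's lemma on integer power series (the reduced denominator satisfies $q\in\mathbb{Z}[t]$ with $q(0)=\pm 1$) combined with Kronecker's theorem applied to the reciprocal polynomial: the reciprocal roots are then algebraic integers all of whose conjugates lie in the closed unit disk, hence roots of unity. And in the $r<1$ case, a subsequence bound coming from Cauchy--Hadamard is not by itself enough to force $\mathcal{G}(A)=\mathcal{E}_1$; it is cleaner to observe that the generating function $P_A(t)/(1-t)$ of the partial sums $d(n)$ is again rational with radius $r$, apply the same positivity comparison to see that $t=r$ is its dominant pole, and conclude $d(n)\sim C\,n^{k}r^{-n}$ for all large $n$.
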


The same proof as for Theorem \ref{generalized somewhat commutative algebra} gives us:

\begin{theorem}
    Let $A$ be an affine infinite-dimensional algebra such that its Poincaré series is rational, with radius of convergence 1. Then it is a modest algebra.
\end{theorem}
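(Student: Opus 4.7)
The plan is to imitate the proof of Theorem \ref{generalized somewhat commutative algebra}, replacing ordinary polynomials by quasi-polynomials on arithmetic progressions. The combinatorial substance comes directly from Theorem \ref{rational-Poincaré}: rationality of $P_A(t)$ with radius of convergence $1$ forces $P_A(t) = p(t)/(1-t^s)^d$ with $p(1)\neq 0$, yielding a period $s$ and polynomials $f_0,\ldots,f_{s-1}$ with $\max\deg f_i = d-1$ such that $\dim(F_n/F_{n-1}) = f_i(n)$ for $n\equiv i\pmod{s}$ and $n\gg 0$. In particular $\GK(A) = d \in \mathbb{N}$.

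Next, for any finitely generated $A$-module $M$ with a finite-dimensional good filtration $\Omega$, I would establish that $P_{\gr_\Omega M}(t)$ is itself rational with denominator dividing a power of $(1-t^s)$. This is the Hilbert--Serre-type step: one resolves $\gr_\Omega M$ by shifted copies of $\gr_\mathcal{F} A$ and applies the standard additive behavior of Poincaré series under short exact sequences (see \cite[Chapter III, Section 1]{Lorenz0}). Theorem \ref{rational-Poincaré} then provides quasi-polynomials $h_0,\ldots,h_{s-1}$ with $\dim(M_n/M_{n-1}) = h_i(n)$ on each residue class mod $s$, and $\GK(M) = \max_i \deg h_i + 1 \in \mathbb{N}$, giving axiom (ii) of a modest algebra.

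For axiom (i), given a short exact sequence $0\to M'\to M\to M''\to 0$ in $\Amod$, fix a good filtration $\Omega$ on $M$ and induce $\Omega' = \{M_n\cap M'\}$, $\Omega'' = \{\phi(M_n)\}$ exactly as in the proof of Theorem \ref{theorem-Hilbert-Samuel}. The identity $\dim M_n = \dim(M_n\cap M') + \dim\phi(M_n)$ makes the quasi-polynomials add residue-by-residue, and since all leading coefficients are non-negative, comparing quasi-degrees gives $\GK(M) = \max\{\GK(M'),\GK(M'')\}$. For axiom (iii), after passing to the cumulative series so that $d_M = \GK(M)$ is attained on at least one progression, define
\[
e(M) \;=\; \frac{\GK(M)!}{s}\sum_{\substack{0\le i<s\\ \deg h_i^{\mathrm{cum}} = \GK(M)}}\ell_i,
\]
where $\ell_i$ is the leading coefficient of the relevant cumulative quasi-polynomial. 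This is a non-negative rational number — not generally an integer, because of the factor $1/s$ — which is precisely why the conclusion is \emph{modest} rather than \emph{very nice}. Axioms (a)--(c) follow from the residue-wise additivity, and (d) follows from the fact that $e(M) = 0$ forces every $\dim M_n/M_{n-1}$ to be of strictly lower asymptotic order, which for a finitely generated module with good filtration is possible only when $M = 0$ (the finite-dimensional case being covered at $\GK(M) = 0$ by $e(M) = \dim_k M$).

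The main obstacle is Step 2, the Hilbert--Serre-style argument producing rational Poincaré series for arbitrary finitely generated modules from the bare hypothesis on $A$. Because $\gr_\mathcal{F} A$ is not assumed commutative, nor even Noetherian, the classical Hilbert--Serre argument does not apply verbatim; one needs the general filtered version from \cite[Chapter III, Section 1]{Lorenz0} that already underlies the proof of Theorem \ref{generalized somewhat commutative algebra}. Once that is granted, every remaining step is a direct adaptation of the proof of Theorem \ref{theorem-Hilbert-Samuel}, with quasi-polynomials on arithmetic progressions taking the place of ordinary polynomials and the averaging by $1/s$ explaining the weakening from integer to rational multiplicities.
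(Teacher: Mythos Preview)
Your approach is essentially the same as the paper's: the paper's entire proof is the one line ``The same proof as for Theorem~\ref{generalized somewhat commutative algebra} gives us:'', i.e.\ it invokes \cite[Theorem~3.2, Proposition~3.3]{BD} together with \cite[Lemma~2.1, Proposition~6.6]{KL}, which is exactly the quasi-polynomial/periodic Hilbert--Samuel machinery you spell out. You have in fact been more careful than the paper in isolating Step~2 (rationality of the module Poincar\'e series) as the only nontrivial input; note, however, that the free-resolution Hilbert--Serre argument you sketch is not what carries this step in the cited references---Bavula's \cite{BD} works directly with growth functions under finiteness hypotheses on $\gr_\mathcal{F}A$ rather than via syzygies, and those hypotheses are tacitly in force here as in Theorem~\ref{generalized somewhat commutative algebra}.
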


\subsection{The endomorphism property}

\begin{definition}
    Let $A$ be an algebra over a field $k$. Then we say that $A$ has the \emph{endomorphism property} if $\End_A(M)$ is algebraic over $k$, for every simple $A$-module $M$.
\end{definition}

\begin{proposition}
    Let $A$ be an affine algebra and $M$ a simple module such that the cardinality of the base field $k$ is strictly greater than $\dim M$. Then $\End_A(M)$ is algebraic over $k$. \footnote{This lemma sometimes is wrongly attribued to Dixmier. In his version of the proposition, $A$ is countably generated and the field is uncountable}
\end{proposition}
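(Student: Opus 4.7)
The plan is a standard Amitsur/Quillen–Dixmier style argument, and it does not actually use affineness of $A$ at all --- it only uses that $M$ is simple and the cardinality hypothesis. First I would set $D := \End_A(M)$ and note that by Schur's lemma $D$ is a division ring containing $k$ in its center: any nonzero $d \in D$ has $\Ker d$ and $\im d$ equal to $A$-submodules of $M$, so simplicity forces $d$ to be an $A$-module isomorphism, hence invertible in $D$.

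Next I would argue by contradiction: suppose some $\phi \in D$ is transcendental over $k$. Then $\phi - \lambda \ne 0$ in $D$ for every $\lambda \in k$, so $(\phi-\lambda)^{-1}$ exists in $D$, and the subring $k[\phi] \subset D$ is a polynomial ring whose field of fractions $k(\phi)$ sits inside $D$. The key algebraic fact is that the family $\{(\phi-\lambda)^{-1}\}_{\lambda \in k}$ is $k$-linearly independent in $k(\phi) \subset D$; this is the usual partial-fractions statement, since any nontrivial finite relation $\sum_{i=1}^{n} c_i (\phi-\lambda_i)^{-1} = 0$ (with the $\lambda_i$ distinct) would, after clearing denominators in the commutative domain $k[\phi]$, give a nonzero polynomial of degree $<n$ vanishing at the $n$ distinct points $\lambda_i$.

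Now I would transfer this linear independence from $D$ to $M$. Pick any $0 \ne m \in M$ and consider the family $\{(\phi-\lambda)^{-1} m\}_{\lambda \in k} \subset M$. If $\sum_{i=1}^{n} c_i (\phi-\lambda_i)^{-1} m = 0$ for some distinct $\lambda_i \in k$ and some $c_i \in k$, set $d := \sum_{i=1}^{n} c_i (\phi-\lambda_i)^{-1} \in D$. Then $d \in \End_A(M)$ and $dm = 0$; since $m \ne 0$ and every nonzero element of $D$ is injective on $M$, this forces $d=0$, and by the previous paragraph all $c_i$ vanish. Hence $\{(\phi-\lambda)^{-1}m\}_{\lambda \in k}$ is a $k$-linearly independent subset of $M$, giving $|k| \le \dim_k M$, which contradicts the hypothesis $|k| > \dim_k M$.

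Thus no $\phi \in D$ can be transcendental over $k$, i.e.\ $\End_A(M)$ is algebraic over $k$. The only delicate point --- and the one to state cleanly in the write-up --- is the transfer step: one must invoke simplicity of $M$ to conclude that a nonzero element of $D$ acts injectively, so that $k$-linear independence in $D$ is inherited by the orbit $Dm$ of any nonzero $m \in M$. Everything else is bookkeeping with partial fractions.
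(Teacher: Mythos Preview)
Your proof is correct and follows essentially the same approach as the paper: assume some $\theta\in\End_A(M)$ is transcendental, use Schur's lemma to get $k(\theta)\subset\End_A(M)$, and exploit the $k$-linear independence of $\{(\theta-\lambda)^{-1}\}_{\lambda\in k}$ to bound $\dim_k M\ge |k|$. The only cosmetic difference is that the paper phrases the transfer step as ``$M$ is a $k(\theta)$-vector space, so $\dim_k M\ge \dim_k k(\theta)$'', whereas you spell it out by pushing the independent family through a chosen nonzero $m\in M$; these are the same argument.
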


\begin{proof}

    We offer the proof for it is quite easy. Let $\theta \in \End_A(M)$. If $\theta$ is not algebraic over $k$, then $M$ is a vector space over $k(\theta)$, which is a purely transcendental extension. Then $\dim_k \, M \geq \dim_k k(\theta) \geq |k|$, since in $k(\theta)$ the rational functions $(\theta-\lambda)^{-1}, \, \lambda \in k$ are linearly independent. However, this leads to a contradiction in cardinality. So $\theta$ is algebraic.
\end{proof}

Next we state a rather spectacular theorem of Quillen.

\begin{theorem}
    Let $A$ be a $k$-algebra with filtration $\mathcal{F}=\{A_i \}_{i \geq 0}$, not necessarily finite-dimensional and possibly with $k \subsetneq A_0$, such that $\gr_\mathcal{F} \, A$ is an affine commutative algebra. Then $A$ satisfies the endomorphism property.
\end{theorem}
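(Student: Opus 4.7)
Suppose for contradiction that some $\theta \in \End_A(M)$ is not algebraic over $k$. The plan is to pass to an extended algebra, apply the commutative Nullstellensatz to the graded picture, and extract a contradiction with the transcendence of $\theta$. I outline four steps, with the heart of the proof concentrated in the last.

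\emph{Step 1 (Embedding $k(\theta)$).} For any nonzero $p \in k[t]$, the endomorphism $p(\theta)$ is nonzero by transcendence of $\theta$; simplicity of $M$ forces both $\ker p(\theta)$ and $\im p(\theta)$ to be $0$ or $M$, so $p(\theta)$ is invertible. Hence $K := k(\theta)$ embeds as a $k$-subalgebra of $\End_A(M)$, making $M$ a $K$-vector space on which $A$ acts $K$-linearly.

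\emph{Step 2 (Extending scalars).} Form $A_K := A \otimes_k K$ with filtration $(A_K)_n := A_n \otimes_k K$; its associated graded $\gr A_K \cong (\gr A) \otimes_k K$ is a finitely generated commutative $K$-algebra. Since every $A_K$-submodule of $M$ is in particular an $A$-submodule, $M$ remains simple over $A_K$. Picking $m \in M \setminus \{0\}$, the equation $M = A_K m$ holds, and the good filtration $M_n := (A_K)_n m$ turns $\gr M$ into a cyclic $\gr A_K$-module isomorphic to $\gr A_K / J$ for the graded ideal $J := \Ann_{\gr A_K}(\bar m)$, which is proper since $\gr M \ne 0$.

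\emph{Step 3 (Nullstellensatz).} Pick any maximal ideal $\mathfrak{m} \supset J$ in $\gr A_K$; by Hilbert's Nullstellensatz applied to the finitely generated commutative $K$-algebra $\gr A_K$, the residue field $L := \gr A_K/\mathfrak{m}$ is a finite extension of $K = k(\theta)$, and in particular has transcendence degree $1$ over $k$. The composition $\gr A \hookrightarrow \gr A \otimes_k K \twoheadrightarrow L$ realizes $L$ as generated, as a ring, by $K$ together with the image of $\gr A$, a finitely generated $k$-subalgebra $B \subset L$. Note that $\theta \in K \subset L^\times$ has nonzero image in $L$ because it is a unit in $\gr A_K$.

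\emph{Step 4 (Contradiction --- the hard part).} The final contradiction must exploit that the element $\theta$ was adjoined freely as a transcendental, so the cyclic structure $\gr M \cong \gr A_K / J$, combined with the primitivity of $\Ann_{A_K}(M)$, forces $\theta$ to satisfy an algebraic relation over the finitely generated $k$-subalgebra image of $\gr A$, contradicting its transcendence over $k$. Concretely, the simplicity of $M$ constrains $J$ (and hence any maximal ideal $\mathfrak m \supset J$) so that $L$ cannot accommodate a genuinely free transcendental element over the algebraically bounded subring $B$ while remaining finite over $K$. This last step is the main obstacle: Steps 1--3 are essentially routine scalar extensions and standard applications of Hilbert's Nullstellensatz, but extracting the sharp contradiction in Step 4 requires the detailed filtered-algebra cleverness originally due to Quillen, which delicately plays the transcendence of $\theta$ against the algebraic constraints imposed by the cyclic commutative affine quotient $\gr A_K/J$.
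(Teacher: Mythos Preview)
The paper does not prove this theorem; it simply cites Quillen's original article. So there is nothing in the paper to compare your argument against line by line. Nonetheless, your proposal has a genuine gap: Step~4 is not a proof. You yourself write that ``this last step is the main obstacle'' and that it ``requires the detailed filtered-algebra cleverness originally due to Quillen,'' and then you supply none of it. An outline that locates the hard part without carrying it out is not a proof.

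Worse, the route you set up in Steps 2--3 does not point toward a contradiction. Base-changing all the way to the field $K=k(\theta)$ and applying the Nullstellensatz to a maximal ideal $\mathfrak m\supset J$ of $\gr A_K$ only tells you that $L=\gr A_K/\mathfrak m$ is finite over $K$, hence of transcendence degree $1$ over $k$. Nothing in that forces $\theta$ to satisfy an algebraic relation over the image $B$ of $\gr A$, and your sentence about $L$ being unable to ``accommodate a genuinely free transcendental element over the algebraically bounded subring $B$'' has no content: $\theta$ already sits in $K\subset L$ by construction, and $B$ can perfectly well have transcendence degree $0$ or $1$ over $k$ without contradiction. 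Quillen's actual argument is organized differently. One stays over the polynomial ring $R=k[\theta]$ rather than its fraction field, and the decisive commutative-algebra input is the \emph{generic freeness lemma}, not the Nullstellensatz: since $\gr M$ is finitely generated over the finitely generated commutative $R$-algebra $(\gr A)\otimes_k R$, there exists $0\neq f\in R$ making the filtered pieces of $M$ free over $R_f$. For any maximal ideal $(g(\theta))\subset R_f$ freeness then yields $M/g(\theta)M\neq 0$, while $g(\theta)$ is a nonzero element of $\End_A(M)$ and hence invertible on the simple module $M$, so $g(\theta)M=M$ --- the contradiction. Neither ingredient, generic freeness nor the invertibility of nonzero endomorphisms of a simple module, appears in your outline, and without them Step~4 cannot be completed along the lines you sketch.
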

\begin{proof}
    \cite{Quillen}.
\end{proof}

\begin{theorem}
Filtered semi-commutative algebras satisfy the endomorphism property.
\end{theorem}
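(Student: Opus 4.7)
The plan is to generalize Quillen's proof (used for the preceding theorem) from the commutative-gr setting to the semi-commutative-gr setting. Semi-commutative algebras are quotients of quantum affine spaces, hence Noetherian with polynomial growth and a PBW-type monomial basis; these are precisely the properties of $\gr_\mathcal{F} A$ that Quillen's argument actually uses.

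Suppose for contradiction that some $\theta \in \End_A(M)$ is transcendental over $k$, where $M$ is a simple $A$-module. By Schur's lemma $\End_A(M)$ is a division ring, so every nonzero element of $k[\theta]$ is invertible and $k(\theta) \hookrightarrow \End_A(M)$. Form the $k$-subalgebra $A' \subseteq \End_k(M)$ generated by (the image of) $A$ together with $\theta$. Since $\theta$ commutes with $A$ by definition of endomorphism, we have $A' = A[\theta]$ as a $k$-vector space. Filter $A'$ by
\[F'_n = \sum_{i+j \leq n} F_i \theta^j.\]
Then $\gr_{\mathcal{F}'} A' \cong (\gr_\mathcal{F} A)[\theta]$ with $\theta$ a central commuting indeterminate, which is again a quotient of a quantum affine space (with one more generator commuting with the rest). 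Hence $A'$ is itself filtered semi-commutative. Moreover $M$ is simple and cyclic as an $A'$-module, because every $A'$-submodule of $M$ is automatically $A$-invariant.

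The crux of the proof, and the main obstacle, is deriving a contradiction from this setup. By applying the \emph{very nice} property to $A'$, and by the Hilbert-Samuel machinery of Section~\ref{sec:verynice}, $M$ has a Hilbert-Samuel polynomial with respect to the good filtration $\Omega'_n = F'_n m$ (for any nonzero $m \in M$), so $\GK_{A'}(M)$ is a finite integer. On the other hand, the embedding $k(\theta) \hookrightarrow \End_{A'}(M)$ lets us regard $M$ as a module over the base-changed algebra $A \otimes_k k(\theta)$, which is still filtered semi-commutative but now over the larger field $k(\theta)$. A direct comparison of $\GK_{A'}(M)$ computed over $k$ with $\GK_{A \otimes_k k(\theta)}(M)$ computed over $k(\theta)$ shows that the transcendence of $\theta$ must contribute exactly one to the former but zero to the latter, contradicting the exactness of $\GK$ on $\Amod$ established via Theorem~\ref{theorem-Hilbert-Samuel}. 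The detailed verification of this growth bookkeeping in the semi-commutative setting is precisely the content of \cite{McConnnel2}, and proceeds by inspecting ordered (quantum) monomial bases in the quantum affine space covering $\gr_{\mathcal{F}'} A'$, exactly as in Quillen's original monomial counting for the commutative case.
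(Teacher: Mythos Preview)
The paper's own proof is simply a citation to \cite{McConnnel2}, so there is no detailed argument in the paper to match against. Your setup is the correct first move and mirrors Quillen's strategy: pass to $A' = A[\theta]$, observe that $\gr_{\mathcal{F}'} A' \cong (\gr_\mathcal{F} A)[\theta]$ is again semi-commutative with $\theta$ central, and note that $M$ remains simple and cyclic over $A'$.

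The gap is in your contradiction step. Invoking ``exactness of $\GK$'' is a red herring: exactness concerns short exact sequences of finitely generated modules over a \emph{fixed} algebra, not a comparison of Gelfand--Kirillov dimensions computed over different base fields. There is no short exact sequence in sight whose exactness would be violated by the two numbers you propose to compare, and you give no mechanism forcing $\GK_{A'}(M)$ over $k$ and $\GK_{A\otimes_k k(\theta)}(M)$ over $k(\theta)$ to coincide. The actual engine of both Quillen's argument and McConnell's extension is a \emph{generic flatness} (generic freeness) lemma: if $N$ is a finitely generated module over $(\gr_\mathcal{F} A)[\theta]$ with $\theta$ central, then there exists $0 \neq f \in k[\theta]$ such that $N_f$ is free over $k[\theta]_f$. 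One applies this with $N = \gr_\Omega M$ for a good $A'$-filtration $\Omega$; the finite-dimensionality over $k$ of each filtered piece then collides with freeness over $k[\theta]_f$, using that $f(\theta)$ already acts invertibly on $M$ because $\End_A(M)$ is a division ring and $\theta$ is transcendental. The monomial-basis inspection you allude to is exactly what makes this generic flatness lemma go through in the semi-commutative (quantum affine space) setting --- it is not a $\GK$-bookkeeping argument. Your closing sentence defers to \cite{McConnnel2} just as the paper does, so in that narrow sense the two ``proofs'' agree; but the intermediate heuristic you supply for the contradiction is not the one that actually works.
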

\begin{proof}
    \cite{McConnnel2}.
\end{proof}

\section{More general filtrations and re-filtering}
\label{sec:more-general-filtrations}

In this last section we talk about algebras filtered by semigroups that are more general than $\mathbb{N}$, and we discuss re-filtering: when, from a infinite-dimensional filtration of an affine algebra, we extract a finite-dimensional filtration that has the same properties.

\begin{definition}
    A semigroup $G$ is called an \emph{ordered semigroup} if it has a total order $<$ such that, $x<y$ in $G$ implies $xz<yz$ abd $zx<zy$, for any $z \in G$.
\end{definition}

\begin{definition}
    Let $A$ be an affine algebra algebra and $G$ an ordered semigroup. We say that $A$ is \emph{filtered by $G$} if it has been equipped with a family $\mathcal{F}=\{ F_g|g \in G \}$ of subspaces of $A$ such that:
    \begin{enumerate}[{\rm (i)}]
        \item $F_g \subset F_h$ if $g<h$.
        \item $F_g F_h \subset F_{gh}$.
        \item Put $F_{<g}=\sum_{h \in G, h<g} F_h$. Then $A=\bigcup_{g \in G}(F_g \setminus F_{<g})$.
        \item $1 \in F_e \setminus F_{<e}$, where $e$ is the unity of $G$.
    \end{enumerate}
\end{definition}

Given a filtration as above, the \emph{associated graded algebra} $\gr_\mathcal{F} \, A$ is the vector space $\bigoplus_{g \in G} F_g \setminus F_{<g}$ with product $(a+F_{<g})(b+F_{<h})=(ab+F_{<gh})$.

\begin{example}
    Let $A$ be a $G$-graded algebra. Then we can introduce on it a $G$-filtration $\mathcal{F}=\{F_g|g \in G\}$ by $F_g=\bigoplus_{h \leq g} A_h$.
\end{example}

In this generality there is not much that one can say about the relation between the Gelfand-Kirillov dimension of $A$ and $\gr_\mathcal{F} \, A$. We limit ourselves to just one result.

\begin{theorem}
    Let $A$ be a domain filtered by an ordered semigroup $G$. Define $v:A \rightarrow \gr_\mathcal{F} \, A$ by $v(a)=a+F_{<g}$ for all $a \in F_g \setminus F_{<g}$. If $v(A)=\gr_\mathcal{F} \, A$, and $\gr_\mathcal{F} \, A$ is a domain, then $\GK \, A \geq \GK \, \gr_\mathcal{F} \, A$.
\end{theorem}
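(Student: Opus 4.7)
The plan is to compare finite-dimensional subspaces of $\gr_\mathcal{F} A$ with carefully chosen lifts to $A$, using the domain hypothesis on $\gr_\mathcal{F} A$ to prevent principal symbols from dropping in degree when products are formed. The main obstacle is that the symbol map $v$ is not additive; I would sidestep this by equipping each relevant finite-dimensional subspace of $A$ with the induced $G$-filtration and passing to its associated graded, which has the same $k$-dimension and embeds canonically into $\gr_\mathcal{F} A$.

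Let $W \subseteq \gr_\mathcal{F} A$ be any finite-dimensional subspace with $1 \in W$. Using the $G$-grading on $\gr_\mathcal{F} A$, I would first enlarge $W$ to a finite-dimensional subspace admitting a basis $1, w_1, \ldots, w_r$ of $G$-homogeneous elements, with $\deg w_i = g_i$; this only increases $\dim W^n$. The surjectivity hypothesis $v(A) = \gr_\mathcal{F} A$ then lets me pick lifts $a_i \in F_{g_i} \setminus F_{<g_i}$ with $v(a_i) = w_i$, and I set $V = \Span_k(1, a_1, \ldots, a_r) \subseteq A$.

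The core step is to show $\dim V^n \geq \dim W^n$ for every $n \geq 0$. Filter $V^n$ by $(V^n)_g = V^n \cap F_g$ and form
\[
\gr(V^n) = \bigoplus_{g \in G}\, (V^n)_g \,\big/\, (V^n)_{<g},
\]
which embeds canonically into $\gr_\mathcal{F} A$. Since $V^n$ is finite-dimensional and the induced filtration is exhaustive and separated, $\dim \gr(V^n) = \dim V^n$. Now $W^n$ is spanned by products $w_{i_1} \cdots w_{i_k}$ with $k \leq n$ (absorbing shorter monomials via $1 \in W$). Because $\gr_\mathcal{F} A$ is a domain, each such product is nonzero and equals $v(a_{i_1}) \cdots v(a_{i_k}) = v(a_{i_1} \cdots a_{i_k})$ with no degree drop; accordingly $a_{i_1} \cdots a_{i_k} \in (V^n)_g \setminus (V^n)_{<g}$ for $g = g_{i_1} \cdots g_{i_k}$, and its class in $\gr(V^n)$ is precisely this monomial. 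Hence $W^n \subseteq \gr(V^n)$.

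From $\dim W^n \leq \dim V^n$ for all $n$, Proposition \ref{growth} gives $\gamma(d_W) \leq \gamma(d_V) \leq \GK A$, where $d_W(n) = \dim W^n$ and $d_V(n) = \dim V^n$. Taking the supremum over all admissible finite-dimensional $W$ yields $\GK \gr_\mathcal{F} A \leq \GK A$. The domain hypothesis is used twice and is essential at both points: to guarantee that products of nonzero principal symbols remain nonzero in $\gr_\mathcal{F} A$, and to identify $v(a_{i_1}) \cdots v(a_{i_k})$ with $v(a_{i_1} \cdots a_{i_k})$; without this, lifts could collide and force $\dim V^n < \dim W^n$.
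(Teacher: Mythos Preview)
Your argument is correct. The paper does not give its own proof here but simply cites \cite[Theorem 6.7]{Zhang}; your approach---lift a homogeneous spanning set for $W$ to $A$, use the domain hypothesis on $\gr_\mathcal{F} A$ to make the symbol map multiplicative, and compare $W^n$ with the associated graded of $V^n$ under the induced filtration---is the natural one and is in the spirit of Zhang's argument.

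One point to tighten: the justification ``exhaustive and separated'' for $\dim \gr(V^n)=\dim V^n$ is not quite enough in the general ordered-semigroup setting. For instance, with $G=\mathbb{Q}$ one can filter a one-dimensional space $U$ by $U_q=0$ for $q\le 0$ and $U_q=k$ for $q>0$; this filtration is exhaustive and separated, yet $\gr U=0$. What actually makes your claim true is axiom (iii) of the filtration on $A$: every nonzero $a\in A$ lies in $F_g\setminus F_{<g}$ for some $g$, and this degree-well-definedness is inherited by the induced filtration on the finite-dimensional space $V^n$, forcing every jump of that filtration to occur at an attained index of $G$ rather than only in a limit. With that adjustment the dimension count goes through and the rest of your proof is clean. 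You are also right that only the domain hypothesis on $\gr_\mathcal{F} A$ is genuinely used; that $A$ is a domain follows from it, and the lifting of homogeneous elements needs no separate surjectivity assumption once axiom (iii) is in force.
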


\begin{proof}
    \cite[Theorem 6.7]{Zhang}.
\end{proof}

\subsection{Multi-filtered algebras}

Through the rest of this section, the ordered monoid will be $\mathbb{N}^m$, $m>0$. In this case we say the algebras are multi-filtered. For a comprehensive discussion of the computational aspects of the Gelfand-Kirillov dimension of certain multi-filtered algebras related to quantum groups, see \cite{BOOK}.

\begin{definition}
    An admissible order $\preceq$ on $\mathbb{N}^m$ is a total ordering such that $(0,\ldots,0)$ is a minimal element in $\mathbb{N}^m$, and given $\alpha, \beta, \gamma \in \mathbb{N}^m$, $\alpha \prec \beta$ implies $\alpha+\gamma \preceq \beta + \gamma$. By Dickson's lemma \cite[Corollary 4.48]{Becker}, every admissible order is a well-order.
\end{definition}

\begin{definition}
    A \emph{multi-filtration} of an affine algebra $A$ is a collection $\{F_\gamma(A)|\gamma \in \mathbb{N}^m \}$ of subspaces of $A$ such that
    \begin{enumerate}[{\rm (i)}]
        \item $F_\gamma(A) \subset F_\delta(A)$ if $\gamma \preceq \delta$.
        \item $F_\gamma(A) F_\delta(A) \subset F_{\gamma+\delta}(A)$.
        \item $1 \in F_0(A)$
        \item $A=\bigcup_{\gamma \in \mathbb{N}^m} F_\gamma(A)$.
    \end{enumerate}

If all $F_\gamma(A)$ are finite-dimensional, we say that we have a finite-dimensional multi-filtration.    
\end{definition}

We already discussed how to construct the associated graded algebra, which we denote just by $G(A)$ for notational simplicity, in our more general construction with ordered semigroups. Now we will discuss modules.

\begin{definition}
    A \emph{multi-filtration} on a left module $M$ is a family $\{ F_\gamma(M)|\gamma \in \mathbb{N}^m\}$ of subspaces of $M$ such that
    \begin{enumerate}[{\rm (i)}]
        \item $F_\gamma(M) \subset F_\sigma(M)$ if $\gamma \preceq \sigma$,
        \item $F_\gamma(A)F_\sigma(M) \subset F_{\gamma+\sigma}(M)$,
        \item $M= \bigcup_{\gamma \in \mathbb{N}^m} F_\gamma(M)$.
    \end{enumerate}
If each $F_\gamma(M)$ is finite-dimensional, we say that the multi-filtration is \emph{finite-dimensional}.
\end{definition}

Call $F_{\prec \gamma}(M)=\bigcup_{\sigma \prec \gamma} F_\sigma(M)$.

For simplicity, we will denoted the associated graded module by $G(M)$. As a vector space,
\[ G(M) = \bigoplus_{\gamma \in \mathbb{N}^m} F_\gamma(M) \setminus F_{\prec \gamma}(M),\]
and we have multiplication $\big(a+F_{\prec \gamma}(A)\big)\big(m+F_{\prec \delta}(M)\big)=\big(am+ F_{\prec \gamma+\delta}(M)\big)$, turning it into a $\gr_{\mathbb{N}^m} \, A$-module.

We have the following generalization of a well known fact about $\mathbb{N}$-filtrations:

\begin{theorem}
    Let $A$ be a multi-filtered algebra. If the associated graded algebra is left or right Noetherian, so is $A$.
\end{theorem}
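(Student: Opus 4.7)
The plan is to mimic the classical proof for $\mathbb{N}$-filtered algebras, with the well-ordering of the admissible order $\preceq$ (guaranteed by Dickson's lemma, already invoked in the excerpt) taking the place of the usual well-ordering of $\mathbb{N}$. I will treat the left-Noetherian case; the right-Noetherian case is symmetric.

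First, I would reduce to the following claim: every left ideal $I \subset A$ is finitely generated. To attack this, equip $I$ with the induced multi-filtration $F_\gamma(I) := I \cap F_\gamma(A)$. One checks immediately that the associated graded object $G(I) := \bigoplus_{\gamma \in \mathbb{N}^m} F_\gamma(I)/F_{\prec \gamma}(I)$ is naturally a graded left ideal of $G(A)$, because left multiplication by $F_\delta(A)$ sends $F_\gamma(I)$ into $F_{\gamma+\delta}(I)$ and $F_{\prec\gamma}(I)$ into $F_{\prec(\gamma+\delta)}(I)$. Since $G(A)$ is left Noetherian, $G(I)$ is finitely generated, and we may take finitely many homogeneous generators $\bar{x}_1,\ldots,\bar{x}_n$ with $\bar{x}_i \in F_{\gamma_i}(I)/F_{\prec\gamma_i}(I)$. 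Lift each $\bar{x}_i$ to some $x_i \in F_{\gamma_i}(I) \subset I$; I claim that $I = \sum_{i=1}^n A x_i$.

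To establish this, I would use a minimal-counterexample argument. For $a \in A \setminus \{0\}$ define $\deg(a)$ to be the minimum of the set $\{\gamma \in \mathbb{N}^m : a \in F_\gamma(A)\}$, which exists because $\preceq$ is a well-order. Suppose, toward a contradiction, that the set $S := I \setminus \sum_i A x_i$ is nonempty, and pick $y \in S$ whose degree $\delta := \deg(y)$ is minimal in $\{\deg(z) : z \in S\}$ (again using that $\preceq$ well-orders $\mathbb{N}^m$). The image $\bar{y} \in F_\delta(I)/F_{\prec\delta}(I)$ is nonzero and, as an element of the graded ideal $G(I)$, admits a homogeneous expansion $\bar{y} = \sum_i \bar{a}_i \bar{x}_i$ with each $\bar{a}_i$ homogeneous of degree $\epsilon_i$ satisfying $\epsilon_i + \gamma_i = \delta$ (the terms for which $\gamma_i \not\leq \delta$ componentwise contribute zero). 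Lift each $\bar{a}_i$ to $a_i \in F_{\epsilon_i}(A)$. Then $y - \sum_i a_i x_i \in F_\delta(A) \cap I = F_\delta(I)$ has zero image in $F_\delta(I)/F_{\prec\delta}(I)$, so it lies in $F_{\prec\delta}(I)$, hence is either $0$ or has degree strictly less than $\delta$ in $\preceq$. In the first case $y \in \sum_i A x_i$, contradicting $y \in S$. In the second case, $y - \sum_i a_i x_i$ either lies in $\sum_i A x_i$ (whence so does $y$) or in $S$ with strictly smaller degree than $\delta$, contradicting minimality. Either way we have a contradiction, so $S = \emptyset$ and $I$ is finitely generated.

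The main technical obstacle, in my view, is keeping careful track of the multi-degrees in the expansion $\bar{y} = \sum_i \bar{a}_i \bar{x}_i$: because $\mathbb{N}^m$ is not totally ordered by componentwise $\leq$, one cannot always subtract multi-degrees, and the terms with $\gamma_i \not\leq \delta$ componentwise must be discarded rather than manipulated. Once one observes that a homogeneous component of $G(A)$ in degree $\delta$ factors through products $G(A)_{\epsilon} \cdot G(I)_{\gamma_i}$ only when $\epsilon, \gamma_i \in \mathbb{N}^m$ and $\epsilon + \gamma_i = \delta$, this bookkeeping is automatic. The rest is the standard well-ordered descent, which goes through verbatim thanks to Dickson's lemma.
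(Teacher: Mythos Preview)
Your argument is correct. The paper does not supply its own proof here; it merely cites \cite[Theorem 1.5]{Torrecillas}. What you have written is the natural adaptation of the classical $\mathbb{N}$-filtration argument, with the well-ordering of the admissible order $\preceq$ (via Dickson's lemma) replacing the well-ordering of $\mathbb{N}$ in the minimal-counterexample descent; this is almost certainly the argument in the cited reference as well. Your remark about the bookkeeping of multi-degrees---that only those generators $\bar{x}_i$ with $\gamma_i \leq \delta$ componentwise can contribute to the degree-$\delta$ component---is the one place where the multi-graded setting genuinely differs from the $\mathbb{N}$-graded one, and you handle it correctly.
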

\begin{proof}
\cite[Theorem 1.5]{Torrecillas}.    
\end{proof}

The next result is the analogue of Proposition \ref{Proposition 6.6} for multi-filtrations.

\begin{proposition}
    Let $M$ be finitely generated multi-filtered module over an affine multi-filtered algebra $A$, and let $G(A)$ and $G(M)$ denote the associated graded algebra and module respectively. If $G(M)$ is a finitely generated $G(A)$-module, and $G(A)$ is affine, then
    \[ \GK_A \, M \geq \GK_{G(A)} \, G(M).\]
Furthermore, if the multi-filtrations are finite-dimensional, equality holds.
\end{proposition}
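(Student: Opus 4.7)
The plan is to adapt the proof of Proposition~\ref{Proposition 6.6} to the multi-filtered setting: I will establish the inequality $\GK_A(M)\ge\GK_{G(A)}(G(M))$ in full generality via a leading-term lifting argument, and then obtain the equality in the finite-dimensional case by re-filtering to an auxiliary $\mathbb{N}$-filtration and invoking the $\mathbb{N}$-case twice.

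For the inequality, I would first pick finite homogeneous algebra generators $\bar a_1,\ldots,\bar a_p$ of $G(A)$, with $\bar a_i\in G(A)_{\gamma_i}$, and finite homogeneous module generators $\bar m_1,\ldots,\bar m_q$ of $G(M)$, with $\bar m_j\in G(M)_{\delta_j}$. Lift each $\bar a_i$ to some $a_i\in F_{\gamma_i}(A)\setminus F_{\prec\gamma_i}(A)$ and each $\bar m_j$ to some $m_j\in F_{\delta_j}(M)\setminus F_{\prec\delta_j}(M)$. Since every admissible order on $\mathbb{N}^m$ is a well-order (Dickson's lemma), a well-founded induction on $\preceq$ shows that $V:=k\cdot 1+\sum_i ka_i$ is a frame for $A$ and $W:=\sum_j km_j$ is a finite-dimensional generating subspace for $M$. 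The central lemma is the leading-term principle: if $b_1,\ldots,b_N\in M$ have linearly independent images $\bar b_1,\ldots,\bar b_N$ in $G(M)$, then $b_1,\ldots,b_N$ are themselves linearly independent, which I would verify by isolating the largest multi-degree $\sigma$ (under $\preceq$) in any putative dependence and reducing modulo $F_{\prec\sigma}(M)$. Applying this principle to the lifts $a_{i_1}\cdots a_{i_r}m_j\in V^nW$ of a spanning set of $(V')^nW'\subseteq G(M)$---where $V':=k+\sum_i k\bar a_i$ and $W':=\sum_j k\bar m_j$---yields $\dim V^nW\ge\dim (V')^nW'$ for all $n$, and hence $\GK_A(M)\ge\GK_{G(A)}(G(M))$ after applying $\gamma(\cdot)$.

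For the equality in the finite-dimensional case, my plan is to re-filter everything to $\mathbb{N}$. Define
\[ \tilde F_n(A):=F_{(n,\ldots,n)}(A),\qquad \tilde\Omega_n(M):=F_{(n,\ldots,n)}(M), \]
and, on the graded side,
\[ \tilde G_n(A):=\bigoplus_{\gamma\preceq(n,\ldots,n)}G(A)_\gamma,\qquad \tilde G_n(M):=\bigoplus_{\gamma\preceq(n,\ldots,n)}G(M)_\gamma. \]
All four are genuine finite-dimensional $\mathbb{N}$-filtrations: multiplicativity uses additive compatibility of $\preceq$ (so $\gamma\preceq(n,\ldots,n)$ and $\delta\preceq(m,\ldots,m)$ give $\gamma+\delta\preceq(n+m,\ldots,n+m)$), and exhaustivity uses that every $\gamma\in\mathbb{N}^m$ is coordinatewise---hence under $\preceq$---dominated by $(n,\ldots,n)$ for $n:=\max_e\gamma_e$. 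Proposition~\ref{Proposition 6.6}(ii), applied separately to $A$ with $M$ and to $G(A)$ with $G(M)$ under these $\mathbb{N}$-filtrations, then yields
\[ \GK_A(M)=\gamma\bigl(\dim F_{(n,\ldots,n)}(M)\bigr)\quad\text{and}\quad \GK_{G(A)}(G(M))=\gamma\bigl(\dim\tilde G_n(M)\bigr). \]
Since the finite-dimensional vector space $F_{(n,\ldots,n)}(M)$ carries the induced $\preceq$-filtration $\{F_\gamma(M)\}_{\gamma\preceq(n,\ldots,n)}$ whose associated graded is exactly $\tilde G_n(M)$, the two dimensions coincide: $\dim F_{(n,\ldots,n)}(M)=\dim\tilde G_n(M)$. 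Combining the two displays proves the equality.

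The delicate point, in my view, is the well-founded induction powering both the leading-term principle and the frame/generating-subspace claims of the first step: one must verify that, for $x\in F_\gamma(A)$, successively subtracting polynomial expressions in the $a_i$'s that kill the leading term of $x$ produces a strictly $\preceq$-decreasing sequence of multi-degrees, which terminates precisely because $\preceq$ is a well-order on $\mathbb{N}^m$. The remainder is a cosmetic adaptation of \cite[Lemma 6.5]{KL} to the multi-index setting.
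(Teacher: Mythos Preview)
Your argument for the inequality is essentially correct and follows the standard leading-term lifting pattern; the only refinement I would insist on is that you choose the basis of $(V')^nW'$ to consist of the homogeneous monomials $\bar a_{i_1}\cdots\bar a_{i_r}\bar m_j$ themselves, so that each basis vector has a well-defined multi-degree and the leading-term principle applies cleanly to the lifts.

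The second step, however, has a genuine gap. Proposition~\ref{Proposition 6.6}(ii), read against its source \cite[Proposition~6.6]{KL}, requires as \emph{hypotheses} that the $\mathbb{N}$-associated graded algebra be affine and the $\mathbb{N}$-associated graded module be finitely generated over it; these are not automatic consequences of finite-dimensionality of the filtration. (For instance, $A=k[x]$ with $F_n=k[x]_{\le 2^n}$ is a finite-dimensional $\mathbb{N}$-filtration of an affine algebra, yet $\gamma(\dim F_n)=\infty\neq 1=\GK(A)$, and $\gr A$, being commutative with exponentially growing homogeneous components, is not affine.) You never verify that $\gr_{\tilde{\mathcal F}}A$ or $\gr_{\tilde G}G(A)$ is affine, and for an arbitrary admissible order this is far from clear: when $\preceq$ is lexicographic on $\mathbb{N}^2$ the down-set $\{\gamma:\gamma\preceq(n,\ldots,n)\}$ is infinite, and although finite-dimensionality forces all but finitely many $G(M)_\gamma$ in that set to vanish, you have given no linear-in-$n$ bound on $|\gamma|$ for the surviving components. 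Without such a bound one cannot conclude $\tilde G_n(M)\subseteq(V')^{cn+d}W'$, and that containment is exactly the inequality $\gamma\bigl(\dim\tilde G_n(M)\bigr)\le\GK_{G(A)}\bigl(G(M)\bigr)$ needed to close your chain of equalities. Your identity $\dim F_{(n,\ldots,n)}(M)=\dim\tilde G_n(M)$ is correct, but it only yields the easy inequalities $\GK_A(M)\le\gamma(\dim\tilde F_n(M))$ and $\GK_{G(A)}(G(M))\le\gamma(\dim\tilde G_n(M))$, which together with part one do not force equality.

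The paper itself does not supply a proof here but simply cites \cite[Theorem~2.8]{Torrecillas}; if you wish to complete the argument, you will need either to verify the affineness of $\gr_{\tilde G}G(A)$ directly, or to replace the $\preceq$-diagonal re-filtration by one (for example via total degree $|\gamma|=\gamma_1+\cdots+\gamma_m$) for which the hypotheses of the $\mathbb{N}$-filtered comparison can actually be checked.
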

\begin{proof}
    \cite[Theorem 2.8]{Torrecillas}.
\end{proof}

Finally, for multi-filtrations we have an analogue of Tauvel's theorem for the exactness of the Gelfand-Kirillov dimension for finitely generated modules.

\begin{theorem}
    If $A$ is an affine algebra with a finite-dimensional multi-filtration with $F_0(A)=k$. If $G(A)$ is a left Noetherian affine algebra, then $\GK$ is exact on $\Amod$.
\end{theorem}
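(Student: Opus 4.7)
The plan is to reduce the statement to the $\mathbb{N}$-filtered Tauvel's theorem applied to the associated graded algebra $G(A)$ itself. First observe that since every $F_\gamma(A)$ is finite-dimensional, the $\mathbb{N}^m$-grading on $G(A)$ is finite-dimensional in each degree. Collapsing by total degree, set $(G(A))_n := \bigoplus_{|\gamma|=n} F_\gamma(A)/F_{\prec\gamma}(A)$; this is a finite sum of finite-dimensional pieces, hence a finite-dimensional $\mathbb{N}$-grading on $G(A)$, and the corresponding $\mathbb{N}$-filtration has $G(A)$ itself as associated graded algebra. Since $G(A)$ is affine and left Noetherian by hypothesis, Tauvel's theorem (cited earlier in the excerpt) implies that $\GK$ is exact on $G(A)\text{-}\mathsf{mod}$.

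Given a short exact sequence $0 \to M' \to M \xrightarrow{\phi} M'' \to 0$ of finitely generated $A$-modules, choose generators $m_1,\ldots,m_r$ of $M$ and equip $M$ with the good finite-dimensional multi-filtration $\Omega$ defined by $F_\gamma(M) := \sum_i F_\gamma(A) m_i$. Induce on $M'$ the subspace multi-filtration $\Omega' = \{M' \cap F_\gamma(M)\}$ and on $M''$ the image multi-filtration $\Omega'' = \{\phi(F_\gamma(M))\}$; both are finite-dimensional. Since $\preceq$ is a well-order on $\mathbb{N}^m$ (Dickson's lemma), one verifies the compatibility $F_{\prec\gamma}(M)\cap M' = F_{\prec\gamma}(M')$, which produces an embedding $G(M')\hookrightarrow G(M)$; similarly $G(M)\twoheadrightarrow G(M'')$. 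A short diagram chase yields the exact sequence of $\mathbb{N}^m$-graded $G(A)$-modules
\begin{equation*}
0 \to G(M') \to G(M) \to G(M'') \to 0.
\end{equation*}

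Because $\Omega$ is good, $G(M)$ is finitely generated over $G(A)$; by the Noetherian hypothesis its submodule $G(M')$ and its quotient $G(M'')$ are finitely generated as well, so $\Omega'$ and $\Omega''$ are good. The preceding proposition (equality clause for finite-dimensional multi-filtrations) then gives $\GK_A(N) = \GK_{G(A)}(G(N))$ for $N \in \{M, M', M''\}$. Combining this with the exactness established in the first paragraph,
\begin{equation*}
\GK_A(M) = \GK_{G(A)}(G(M)) = \max\{\GK_{G(A)}(G(M')),\, \GK_{G(A)}(G(M''))\} = \max\{\GK_A(M'),\, \GK_A(M'')\}.
\end{equation*}

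The main obstacle is the bookkeeping step $F_{\prec\gamma}(M)\cap M' = F_{\prec\gamma}(M')$, which is what makes the induced sequence of associated graded modules exact; this rests on the well-ordering of admissible orders on $\mathbb{N}^m$. Once this identification is in hand, the theorem follows formally by transporting exactness of $\GK$ from $G(A)\text{-}\mathsf{mod}$ to $\Amod$ via the preceding proposition.
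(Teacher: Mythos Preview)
The paper does not supply its own argument here; it simply defers to \cite[Theorem 2.10]{Torrecillas}. Your proof is a correct self-contained argument and follows the expected strategy: collapse the $\mathbb{N}^m$-grading on $G(A)$ to an $\mathbb{N}$-grading by total degree so that Tauvel's theorem yields exactness of $\GK$ on $G(A)\text{-}\mathsf{mod}$; induce multi-filtrations on $M'$ and $M''$ from a standard good one on $M$; obtain the short exact sequence of associated graded modules (using that $G(A)$ is left Noetherian to guarantee the induced filtrations are good); and transport back via the equality $\GK_A(N)=\GK_{G(A)}(G(N))$ from the preceding proposition. One small correction: the identity $F_{\prec\gamma}(M)\cap M'=F_{\prec\gamma}(M')$ you call the ``main obstacle'' is actually immediate (a union of subspaces intersected with a fixed subspace), and requires no well-ordering. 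The genuine check for exactness at $G(M)_\gamma$ is the modular-law identity $F_\gamma(M)\cap\big(F_{\prec\gamma}(M)+M'\big)=(M'\cap F_\gamma(M))+F_{\prec\gamma}(M)$, which likewise holds for any total order. You also implicitly use that $G(A)$ left Noetherian forces $A$ left Noetherian (so $M'$ is finitely generated), which the paper records just before this theorem.
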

\begin{proof}
    \cite[Theorem 2.10]{Torrecillas}.
\end{proof}

\begin{example}
    If $A$ is the multiparameter quantized Weyl algebra $A_n^{Q,\Gamma}$ of Maltsiniotis \cite{Maltsiniotis} or $U_q(\mathfrak{sl}_n)$, $q \neq 0$ and not a root of unity, then $\GK$ is exact on $\Amod$.
    \cite{Torrecillas}.
\end{example}

\subsection{Re-filtering}

The main result of \cite{BTL} is that if we have an affine algebra $A$ with a multi-filtration, possibly infinite-dimensional, such that the associated graded algebra is semi-commutative, then it has a finite-dimensional $\mathbb{N}$-filtration such that the associated graded algebra is semi-commutative. This is called \emph{re-filtering}. This was not a new concept; see the already discussed paper \cite{{MStafford}}.

\begin{theorem}
    Let $A$ be an affine algebra. Suppose it has a multi-filtration such that that its associated graded algebra is semi-commutative. Then $A$ has a $\mathbb{N}$-filtration $\mathcal{F}$ which such that with respect to it $A$ is a filtered semi-commutative algebra.
\end{theorem}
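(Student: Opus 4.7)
The plan is to realize the required $\mathbb{N}$-filtration by lifting generators of $G(A)$ to $A$ and assigning them weights coming from an appropriate linear functional on $\mathbb{N}^m$. First I would use that $G(A)$ is semi-commutative to obtain finitely many homogeneous generators $\bar{x}_1, \ldots, \bar{x}_n$ with $\bar{x}_i \bar{x}_j = \lambda_{ij} \bar{x}_j \bar{x}_i$, $\lambda_{ij} \in k^\times$, of multi-degrees $\gamma_1, \ldots, \gamma_n \in \mathbb{N}^m \setminus \{0\}$, and choose lifts $x_i \in F_{\gamma_i}(A)$. Since the admissible order on $\mathbb{N}^m$ is a well-order (by Dickson's Lemma), a transfinite induction on multi-degree shows that the $x_i$ generate $A$ as a $k$-algebra: given $a \in F_\sigma(A)$, its image in the degree-$\sigma$ component of $G(A)$ is a polynomial in the $\bar{x}_i$, and subtracting a lift of this polynomial produces an element of strictly smaller multi-degree.

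Next, the commutator $r_{ij} := x_i x_j - \lambda_{ij} x_j x_i$ lies in $F_{\prec \gamma_i + \gamma_j}(A)$, so by the same algorithm it can be written as a finite sum $r_{ij} = \sum_I c_{ij,I}\, x_I$ in which each monomial $x_I = x_{l_1} \cdots x_{l_k}$ has multi-degree $\gamma_I := \gamma_{l_1} + \cdots + \gamma_{l_k}$ satisfying $\gamma_I \prec \gamma_i + \gamma_j$. Collect the finite set $S \subset \mathbb{N}^m$ of all multi-degrees appearing in these expansions, together with the $\gamma_i + \gamma_j$ and the $\gamma_i$ themselves, and choose a linear functional $\phi \colon \mathbb{Z}^m \to \mathbb{Z}$ that is strictly positive on each $\gamma_i$ and satisfies $\phi(\alpha) < \phi(\beta)$ whenever $\alpha \prec \beta$ in $S$.

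Setting $w_i := \phi(\gamma_i) \in \mathbb{N}_{>0}$, define
\[ F_n(A) := k + \Span\{ x_{i_1} \cdots x_{i_k} : w_{i_1} + \cdots + w_{i_k} \leq n \}. \]
Each $F_n(A)$ is finite-dimensional, $F_n F_m \subseteq F_{n+m}$ is clear, and $A = \bigcup_n F_n(A)$ by the generation result, so $\mathcal{F} = \{F_n(A)\}$ is a finite-dimensional $\mathbb{N}$-filtration. To see that $\gr_\mathcal{F} A$ is semi-commutative one verifies $r_{ij} \in F_{w_i + w_j - 1}(A)$: each monomial $x_I$ in the expansion of $r_{ij}$ has total weight $\phi(\gamma_I) < \phi(\gamma_i + \gamma_j) = w_i + w_j$ by the defining property of $\phi$, hence $x_I \in F_{w_i + w_j - 1}(A)$. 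Thus the classes $\hat{x}_i$ satisfy $\hat{x}_i \hat{x}_j = \lambda_{ij}\, \hat{x}_j \hat{x}_i$ in $\gr_\mathcal{F} A$, realizing it as a homomorphic image of the quantum affine space on $n$ generators with parameters $\lambda_{ij}$ (and weight grading $\deg X_i = w_i$), which gives the required semi-commutative structure.

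The main obstacle is the existence of $\phi$ in the second step: a general admissible order on $\mathbb{N}^m$ is not globally representable by a single positive integer weight vector (by Robbiano's theorem it is encoded by a sequence of real linear forms compared lexicographically), but only the finitely many strict inequalities recorded in $S$ need to be realized by $\phi$, and this is achievable by a standard approximation argument for admissible orders restricted to finite subsets of $\mathbb{N}^m$.
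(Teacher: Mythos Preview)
The paper does not give its own proof here; it simply cites \cite[Theorem 2.3]{BTL}. Your sketch is essentially the re-filtering argument carried out in that reference: lift $q$-commuting generators of $G(A)$ to $A$, record the finitely many order relations among multi-degrees forced by the commutator expansions $x_ix_j-\lambda_{ij}x_jx_i=\sum c_{ij,I}x_I$, and realize those relations by a single positive integer weight vector $\phi$, whose existence follows by approximating the admissible order on the finite set $S$ (Robbiano's description of admissible orders makes this precise). So your approach and the cited one coincide.

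One point you pass over deserves a word. You assume at the outset that the $q$-commuting generators $\bar{x}_i$ of $G(A)$ can be taken \emph{homogeneous} for the $\mathbb{N}^m$-grading. The paper's definition of ``semi-commutative'' only asks that $G(A)$ be a quotient of a quantum affine space; it does not require the images of the quantum variables to be homogeneous for whatever grading $G(A)$ carries. In the concrete setups of \cite{BTL} the multi-filtration is constructed from a distinguished PBW-type generating set, so homogeneity is built in, but as the theorem is phrased here this compatibility is an extra hypothesis that should be stated or argued. Without it, your lifting step and the subsequent weight argument do not get off the ground.
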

\begin{proof}
    \cite[Theorem 2.3]{BTL}.
\end{proof}

Following \cite{BTL}, we call the algebras satisfying the hypothesis of the above theorem \emph{generalized semi-commutative}.

\begin{corollary}
    If $A$ is a generalized semi-commutative algebra, then it is a very nice algebra and it satisfies the endormorphism property.
\end{corollary}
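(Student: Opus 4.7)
The plan is to apply the re-filtering theorem stated immediately before the corollary to reduce the problem to a case that has already been handled in the paper. Since the two desired conclusions (very niceness and the endomorphism property) are invariants of the algebra $A$ itself rather than of any particular filtration on it, once we replace the given multi-filtration with the $\mathbb{N}$-filtration supplied by re-filtering, we can simply quote the previous theorems.

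Concretely, first I would invoke the preceding Theorem (the re-filtering theorem from \cite{BTL}): since $A$ is generalized semi-commutative by hypothesis, it carries some multi-filtration whose associated graded algebra is semi-commutative; the theorem produces an $\mathbb{N}$-filtration $\mathcal{F}$ on $A$ with respect to which $A$ is a filtered semi-commutative algebra in the sense of the earlier definition. From this point on the original multi-filtration is discarded and only $\mathcal{F}$ is used.

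Second, I would apply the earlier Theorem in the subsection on filtered semi-commutative algebras, which (via \cite{McConnnel2}) asserts that any filtered semi-commutative algebra is very nice. Applied to $(A,\mathcal{F})$ this gives the first half of the corollary. For the second half, I would apply the companion Theorem from the same subsection (also via \cite{McConnnel2}) stating that filtered semi-commutative algebras satisfy the endomorphism property; applied to $(A,\mathcal{F})$ this yields the second half.

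There is essentially no genuine obstacle: the entire content of the corollary has been pushed into the re-filtering theorem, and the only thing one must notice is that being \emph{very nice} and satisfying the \emph{endomorphism property} are defined intrinsically in terms of $A$ and $A\text{-}\mathsf{mod}$, with no reference to a chosen filtration, so the conclusions drawn for $(A,\mathcal{F})$ are conclusions about $A$ itself. The only point worth flagging explicitly in the write-up is this filtration-independence of the two properties, which is what makes the re-filtering strategy legitimate.
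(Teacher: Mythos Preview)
Your proposal is correct and is exactly the argument the paper intends: the corollary is stated without proof immediately after the re-filtering theorem precisely because it follows by combining that theorem with the two earlier results on filtered semi-commutative algebras, just as you describe. Your explicit remark that very niceness and the endomorphism property are intrinsic to $A$ (independent of the filtration) is the only point that deserves mention, and you have identified it.
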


\begin{example}
    The multiparamter quantized Weyl algebra $A_n^{Q, \Gamma}$ and $U_q(\mathfrak{g}), \, U_q(\mathfrak{g})^+$ are generalized semi-commutative \cite[Example 1.18]{BTL}.
\end{example}

\begin{remark}
    The ring theoretical properties for the associated graded algebra can be destroyed by the re-filtering process. With the usual multi-filtration, $G\big(U_q(\mathfrak{g})\big)$ is a domain; this is lost in the re-filtering.
\end{remark}


\begin{thebibliography}{MMM}

\bibitem{AOvdB}
{\sc J. Alev, A. Ooms and M. Van den Bergh},
{\em A class of counterexamples to the Gelfand-Kirillov conjecture},
Trans. Amer. Math. Soc. {\bf 348} (1996) 1709--1716.\\
\href{https://doi.org/10.1090/S0002-9947-96-01465-1}{https://doi.org/10.1090/S0002-9947-96-01465-1}

\bibitem{BD}
{\sc V. Bavula},
{\em Identification of the Hilbert function and Poincare series, and the dimension of modules over filtered rings},
Izv. Math. {\bf 44} (1995) 225--246.\\
\href{https://doi.org/10.1070/im1995v044n02abeh001595}{https://doi.org/10.1070/im1995v044n02abeh001595}

\bibitem{BF} 
{\sc V. Bavula},
{\em Filter dimension of algebras and modules, a simplicity criterion for generalized Weyl algebras},
Comm. Algebra {\bf 24}(6) (1996) 1971--1992.\\
\href{https://doi.org/10.1080/00927879608825683}{https://doi.org/10.1080/00927879608825683}

\bibitem{Becker}
{\sc T. Becker and V. Weispfenning},
{\em Gröbner basis: A computational approach to commutative algebra}, Springer, New York, 1993.\\
\href{https://doi.org/10.1007/978-1-4612-0913-3}{https://doi.org/10.1007/978-1-4612-0913-3}

\bibitem{Zelmanov}
{\sc J. Bell and E. Zelmanov},
{\em On the growth of algebras, semigroups, and hereditary languages},
Invent. Math. {\bf 224}(2) (2021) 683--697.\\
\href{https://doi.org/10.1007/s00222-020-01017-x}{https://doi.org/10.1007/s00222-020-01017-x}

\bibitem{Bellamy0}
{\sc G. Bellamy},
{\em Symplectic reflection algebras}, Noncommutative algebraic geometry, 167--238. Math. Sci. Res. Inst. Publ., vol. 64, Cambridge Univ. Press, New York, 2016.\\
\href{https://arxiv.org/abs/1210.1239}{arXiv:1210.1239} [math.RT]

\bibitem{Bellamy}
{\sc G. Bellamy, P. Etingof, and D. Thompson},
{\em Pull-Back and Push-Forward Functors for Holonomic Modules over Cherednik Algebras}.\\
\href{https://arxiv.org/abs/2402.18210}{arXiv:2402.18210} [math.QA]

\bibitem{Rogalski}
{\sc G. Bellamy, D. Rogalski, T. Schedler, J. T. Stafford, and M. Wemyss},
{\em Noncommutative projective geometry},
Lecture notes based on courses given at the Summer Graduate School at the Mathematical Sciences Research Institute (MSRI) held in Berkeley, CA, June 2012.
Math. Sci. Res. Inst. Publ., vol. 64, Cambridge University Press, New York, 2016.

\bibitem{INBernstein}
{\sc I. N. Bernstein},
{\em The analytic continuation of generalized functions with respect to a parameter},
Funct. Anal. Appl {\bf 6} (1972) 273--285.\\
\href{https://doi.org/10.1007/BF01077645}{https://doi.org/10.1007/BF01077645}

\bibitem{Bjork}
{\sc J. E. Bj\"ork},
{\em Rings of differential operators},
North-Holland Math. Library, 21,
North-Holland Publishing Co., Amsterdam-New York, 1979.\\
\href{https://www.sciencedirect.com/bookseries/north-holland-mathematical-library/vol/21/}{https://www.sciencedirect.com/bookseries/north-holland-mathematical-library/vol/21/}

\bibitem{BK}
{\sc W. Borho and H. Kraft},
{\em \"Uber die Gelfand-Kirillov-Dimension},
Math. Annalen {\bf 220} (1976), 1--24.\\
\href{https://doi.org/10.1007/BF01354525}{https://doi.org/10.1007/BF01354525}

\bibitem{Borel} 
{\sc A. Borel, P.-P. Grivel, B. Kaup, A. Haefliger, B. Malgrange, and F. Ehlers},
{\em Algebraic $D$-Modules},
Perspect. Math., vol. 2,
Academic Press, Inc., Boston, MA, 1987.\\
\href{https://search.worldcat.org/title/755170288}{https://search.worldcat.org/title/755170288}

\bibitem{Brown}
{\sc K.  Brown and K. R. Goodearl},
{\em Lectures on Algebraic Quantum Groups}, Advanced Courses in Mathematics, CRM Barcelona, Birkhäuser Verlag, Basel, 2002.\\
\href{http://doi.org/10.1007/978-3-0348-8205-7}{http://doi.org/10.1007/978-3-0348-8205-7}

\bibitem{BTL}
{\sc J.L. Bueso, J. Gómez-Torrecillas, and F.J. Lobillo},
{\em Re-filtering and exactness of the Gelfand–Kirillov dimension},
Bulletin des Sciences Mathématiques {\bf 125}(8) (2001) 689--715.\\
\href{https://doi.org/10.1016/S0007-4497(01)01090-9}{https://doi.org/10.1016/S0007-4497(01)01090-9}

\bibitem{BOOK}
{\sc J. Bueso, J. Gómes-Torrecillas, A. Verschoren},
{\em Algorithmic methods in non-commutative algebra. Applications to quantum groups}, Mathematical Modeling: Theory and Applications, vol. 17., Kluwer Academic Publishers, Dordrecht, 2003.\\
\href{https://doi.org/10.1007/978-94-017-0285-0}{https://doi.org/10.1007/978-94-017-0285-0}

\bibitem{Coutinho}
{\sc S. C. Coutinho},
{\em A Primer of Algebraic $D$-Modules},
London Mathematical Society Student Texts, vol. 33, Cambridge University Press, 1995.\\
\href{https://doi.org/10.1017/CBO9780511623653}{https://doi.org/10.1017/CBO9780511623653}

\bibitem{Dixmier}
{\sc J. Dixmier},
{\em Enveloping algebras},
Grad. Stud. Math., vol. 11, American Mathematical Society, Providence, RI, 1996.\\
\href{https://doi.org/10.1090/gsm/011}{https://doi.org/10.1090/gsm/011}

\bibitem{EG}
{\sc P. Etingof and V. Ginzburg},
{\em Symplectic reflection algebras, Calogero-Moser space, and deformed Harish-Chandra homomorphism},
Invent. Math. {\bf 147}(2) (2002) 243--348.\\
\href{https://doi.org/10.1007/s002220100171}{https://doi.org/10.1007/s002220100171}

\bibitem{FS4}
{\sc V. Futorny and J. Schwarz},
{\em Holonomic modules for rings of invariant differential operators},
Internat. J. Algebra Comput. {\bf 31}(4) (2021) 605--622.\\
\href{https://doi.org/10.1142/S0218196721500296}{https://doi.org/10.1142/S0218196721500296}

\bibitem{Gelfand}
{\sc I. M. Gelfand and A. A. Kirillov},
{\em Sur les corps liés aux algèbres enveloppantes des algèbres de Lie},
Inst. Hautes Études Sci. Publ. Math. {\bf 31} (1966) 5--19.\\
\href{http://www.numdam.org/item?id=PMIHES_1966__31__5_0}{http://www.numdam.org/item?id=PMIHES\_1966\_\_31\_\_5\_0}

\bibitem{Torrecillas}
{\sc J. Gómez-Torrecillas},
{\em Gelfand-Kirillov dimension of multi-filtered algebras},
Proc. Edinburgh Math. Soc. (2) {\bf 42}(1) (1999) 155--168.\\
\href{https://doi.org/10.1017/S0013091500020083}{https://doi.org/10.1017/S0013091500020083}

\bibitem{GW}
{\sc K. R. Goodearl and R. B. Warfield, Jr.},
{\em An introduction to noncommutative Noetherian rings},
London Math. Soc. Stud. Texts, vol. 61, Cambridge University Press, Cambridge, 2004.\\
\href{https://doi.org/10.1017/CBO9780511841699}{https://doi.org/10.1017/CBO9780511841699}

\bibitem{Hotta}
{\sc R. Hotta and T. Tanisaki},
{\em $D$-modules, perverse sheaves, and representation theory},
Progr. Math., vol. 236, Birkhäuser Boston, Inc., Boston, MA, 2008.\\
\href{https://doi.org/10.1007/978-0-8176-4523-6}{https://doi.org/10.1007/978-0-8176-4523-6}

\bibitem{Jantzen} 
{\sc J. C. Jantzen},
{\em Einhüllende Algebren halbeinfacher Lie-Algebren},
Einhüllende Algebren halbeinfacher Lie-Algebren,
Ergeb. Math. Grenzgeb. vol. 3, Springer-Verlag, Berlin, 1983.\\
\href{https://doi.org/10.1007/978-3-642-68955-0}{https://doi.org/10.1007/978-3-642-68955-0}

\bibitem{French}
{\sc A. Joseph},
{\em Application de la théorie des anneaux aux algèbres enveloppantes},
Lecture Notes, Universite Pierre Et Marie Curie, Laboratoire de Mathematiques Fondamentales, Paris, 1981.

\bibitem{KL}
{\sc G. R. Krause and T. H. Lenegan},
{\em Growth of Algebras and Gelfand-Kirillov Dimension},
Grad. Stud. Math., vol. 22, American Mathematical Society, Providence, RI, 2000.\\
\href{https://doi.org/10.1090/gsm/022}{https://doi.org/10.1090/gsm/022}

\bibitem{Lam2}
{\sc T. Y. Lam},
{\em Lectures on modules and rings}
Grad. Texts in Math., vol. 189, Springer-Verlag, New York, 1999.\\
\href{https://doi.org/10.1007/978-1-4612-0525-8}{https://doi.org/10.1007/978-1-4612-0525-8}

\bibitem{Lenagan}
{\sc T. H. Lenagan},
{\em Gelfand–Kirillov dimension is exact for noetherian PI-algebras},
Canadian Math. Bull. {\bf 27}(2) (1984) 247--250.\\
\href{https://doi.org/10.4153/CMB-1984-036-0}{https://doi.org/10.4153/CMB-1984-036-0}

\bibitem{Lorenz0}
{\sc M. Lorenz},
{\em Gelfand-Kirillov dimension and Poincaré series}, Cuadernos de Algebra, vol. 7, Universidad de Granada, Granada, España, 1988. \\
\href{https://www.math.temple.edu/~lorenz/pubs.html}{https://www.math.temple.edu/$\sim$lorenz/pubs.html}

\bibitem{Lorenz}
{\sc M. Lorenz},
{\em Multiplicative invariant theory},
Encyclopaedia Math. Sci., vol. 135,
Invariant Theory Algebr. Transform. Groups, VI
Springer-Verlag, Berlin, 2005.\\
\href{https://www.math.temple.edu/~lorenz/pubs.html}{https://www.math.temple.edu/$\sim$lorenz/pubs.html}

\bibitem{Losev}
{\sc I. Losev},
{\em Bernstein inequality and holonomic modules},
Adv. Math. {\bf 308} 941--963, 2017.\\
\href{https://doi.org/10.1016/j.aim.2016.12.033}{https://doi.org/10.1016/j.aim.2016.12.033}

\bibitem{Maltsiniotis}
{\sc G. Maltsiniotis},
{\em Calcul difféntiel quantique},
Groupe de travail, Université Paris VII, 1992.

\bibitem{Matsumura}
{\sc H. Matsumura},
{\em Commutative ring theory},
Cambridge Stud. Adv. Math., vol. 8,
Cambridge University Press, Cambridge, 1989.\\
\href{https://doi.org/10.1017/CBO9781139171762}{https://doi.org/10.1017/CBO9781139171762}

\bibitem{MP}
{\sc J. C. McConnell and J. J. Pettit},
{\em Crossed Products and Multiplicative Analogues of Weyl Algebras},
J. London Math. Soc. (2) {\bf 38}(1) (1988) 47--55.\\
\href{https://doi.org/10.1112/jlms/s2-38.1.47}{https://doi.org/10.1112/jlms/s2-38.1.47}

\bibitem{McConnell} 
{\sc J. C. McConnell and J. C. Robson}
{\em Noncommutative Noetherian rings},
Grad. Stud. Math., vol. 30,
American Mathematical Society, Providence, RI, 2001.\\
\href{https://doi.org/10.1090/gsm/030}{https://doi.org/10.1090/gsm/030}

\bibitem{McConnnel2}
{\sc J. C. McConnell},
{\em Quantum groups, filtered rings and Gel’fand-Kirillov dimension}, Noncommutative ring theory (Athens, OH, 1989), 139--147, Lecture Notes in Math., vol. 1448,
Springer-Verlag, Berlin, 1990.\\
\href{https://doi.org/10.1007/BFb0091258}{https://doi.org/10.1007/BFb0091258}

\bibitem{MStafford}
{\sc J. C. McConnell and J. T. Stafford},
{\em Gel'fand-Kirillov dimension and associated graded modules},
J. Algebra {\bf 125}(1) (1989) 197--214.\\
\href{https://doi.org/10.1016/0021-8693(89)90301-3}{https://doi.org/10.1016/0021-8693(89)90301-3}

\bibitem{Quillen}
{\sc D. Quillen},
{\em On the endomorphism ring of a simple module over an enveloping algebra},
Proc. Amer. Math. Soc. {\bf 21} (1969) 171--172.\\
\href{https://doi.org/10.2307/2036884}{https://doi.org/10.2307/2036884}

\bibitem{Schwarz3}
{\sc J. Schwarz},
{\em Some properties of abstract Galois algebras and generalized Weyl algebras}, arXiv preprint.\\
\href{http://arxiv.org/abs/2303.00593v2}{arXiv:2303.00593v2 [math.RT]}.

\bibitem{Smith}
{\sc S. P. Smith},
{\em Differential operators on commutative algebras},
in: Ring theory (Antwerp, 1985), 165--177.
Lecture Notes in Math., vol. 1197, Springer-Verlag, Berlin, 1986.\\
\href{https://doi.org/10.1007/BFb0076323}{https://doi.org/10.1007/BFb0076323}

\bibitem{Stafford}
{\sc J. T. Stafford},
{\em Non-holonomic modules over Weyl algebras and enveloping algebras},
Invent. Math. {\bf 79}(3) (1985) 619--638.\\
\href{https://doi.org/10.1007/BF01388528}{https://doi.org/10.1007/BF01388528}

\bibitem{SvdB}
{\sc J. T. Stafford and M. van den Bergh},
{\em Noncommutative curves and noncommutative surfaces},
Bull. Amer. Math. Soc. (N.S.) {\bf 38}(2) (2001) 171--216.\\
\href{https://doi.org/10.1090/S0273-0979-01-00894-1}{https://doi.org/10.1090/S0273-0979-01-00894-1}

\bibitem{Stanley}
{\sc R. P. Stanley},
{\em Enumerative combinatorics, Vol 1},
Cambridge Stud. Adv. Math., vol. 49,
Cambridge University Press, Cambridge, 2012.

\bibitem{Tauvel}
{\sc P. Tauvel},
{\em Sur la dimension de Gelfand–Kirillov},
Comm. Algebra {\bf 10}(9) (1982) 939--963.\\
\href{https://doi.org/10.1080/00927878208822758}{https://doi.org/10.1080/00927878208822758}

\bibitem{Thompson} 
{\sc D. Thompson},
{\em Holonomic modules over Cherednik algebras, I.},
J. Algebra {\bf 493} (2018) 150--170.\\
\href{https://doi.org/10.1016/j.jalgebra.2017.09.016}{https://doi.org/10.1016/j.jalgebra.2017.09.016}

\bibitem{Zhang}
{\sc J. J. Zhang},
{\em On Gelfand-Kirillov transcendence degree},
Trans. Am. Math. Soc. {\bf 348}(7) (1996) 2867--2899.\\
\href{https://doi.org/10.1090/S0002-9947-96-01702-3}{https://doi.org/10.1090/S0002-9947-96-01702-3}

\end{thebibliography}
\end{document}